\newenvironment{figurehere}
  {\def\@captype{figure}}
  {}
\renewenvironment{proof}[1][\proofname]{\par
  \pushQED{\qed}%
  \normalfont \topsep6\p@\@plus6\p@\relax
  \trivlist
  \item[\hskip\labelsep
        \itshape
    #1\@addpunct{.}]
}{%
  \popQED\endtrivlist\@endpefalse
\mbox{}\medskip
}
\let\mcnewpage=\newpage
\newcommand{\Trick}{
\renewcommand\newpage{%
        \if@firstcolumn
            \hrule width\linewidth height0pt
                \columnbreak
        \else
                \mcnewpage
        \fi
}
}
\newtheorem{theorem}{Theorem}[section]
\newtheorem{proposition}[theorem]{Proposition}
\newtheorem{lemma}[theorem]{Lemma}
\newtheorem{corollary}[theorem]{Corollary}
\newtheorem{question}[theorem]{Question}
\newtheorem*{proposition*}{Proposition}
\newtheorem*{theorem*}{Theorem}
\newtheorem{maintheorem}{Theorem}
\theoremstyle{definition}
\newtheorem{definition}[theorem]{Definition}
\newtheorem{remark}[theorem]{Remark}
\newtheorem*{definition*}{Definition}
\newtheorem*{notation}{Notation}
\newtheorem*{remark*}{Remark}
\theoremstyle{remark}
\newcommand{\CC}{\mathbb{C}}
\newcommand{\ZZ}{\mathbb{Z}}
\newcommand{\QQ}{\mathbb{Q}}
\newcommand{\RR}{\mathbb{R}}
\newcommand{\NN}{\mathbb{N}}
\def\PP{\mathbb{P}}
\def\dis{\displaystyle}
\def\exh{\xi}
\def\pa{\partial}
\def\cohdim{\mathrm{coh\text{-}dim}}
\def\dim{\mathrm{dim}}
\def\ord{\mathrm{ord}}
\def\aut{\mathrm{Aut}}
\def\ev{\mathrm{ev}}
\def\Ev{\mathrm{Ev}}
\def\a{\alpha}
\def\e{\varepsilon}
\def\ph{\varphi}
\def\rar{\rightarrow}
\def\surj{\twoheadrightarrow}
\def\arr#1#2{\stackrel{#1}{#2}}
\def\hra{\hookrightarrow}
\def\lra{\longrightarrow}
\def\ul#1{\underline{#1}}
\def\ol#1{\overline{#1}}
\def\wti#1{\widetilde{#1}}
\def\Ocal{\mathcal{O}}
\def\M{\mathcal{M}}
\def\C{\mathcal{C}}
\def\Sfrak{\mathfrak{S}}
\def\Wfrak{\mathfrak{W}}
\def\Pcal{\mathcal{P}}
\def\bm#1{\text{\boldmath$#1$}}
\def\Bcal{\mathcal{B}}
\def\Dcal{\mathcal{D}^\bullet}
\def\Tcal{\mathcal{T}^\bullet}
\def\Kcal{\mathcal{K}^\bullet}
\def\Hbb{\mathbb{H}}
\def\Vfrak{\mathfrak{V}}
\def\diskrm{N}
\def\disk{\mathcal{\diskrm}}
\def\Hloc{\bm{H}}
\def\h{\mathfrak{h}}
\def\eps{\varsigma}
\def\cdom{\Xi}
\def\mo{o}
\def\vc#1{{\bm{#1}}}
\def\num#1{{\bm{#1}}}
\begin{document}

%%%%%%%%%%%%%%%%%%%%%%%%%%%%%%%%%%%%%%%%%%%%%%%%%%%%%%%%%%%%%%%%%%%%%%
%     TITLE
%%%%%%%%%%%%%%%%%%%%%%%%%%%%%%%%%%%%%%%%%%%%%%%%%%%%%%%%%%%%%%%%%%%%%%

\title[On the cohomological dimension of $\M_g$]{On the cohomological dimension \\ of the moduli space of Riemann surfaces}

\author{Gabriele Mondello}
\email{mondello@mat.uniroma1.it}
\address{Dipartimento di Matematica, ``Sapienza'' Universit\`a di Roma\\
Piazzale Aldo Moro 5 - 00185 Roma, Italy}
\thanks{The author's research was partially supported
by grant FIRB 2010 (RBFR10GHHH\_003) ``Low-dimensional geometry and topology''.}

%\date{13 May 2014}

\keywords{Riemann surfaces, moduli space, translation surfaces, cohomological dimension}
\subjclass{32G15, 32F10, 30F30}

\begin{abstract}
The moduli space of Riemann surfaces of genus $g\geq 2$ is (up to a finite \'etale cover) a complex manifold and so it makes
sense to speak of its Dolbeault cohomological dimension.
The conjecturally optimal bound is $g-2$. This expectation is verified in low genus and supported by Harer's computation
of its de Rham cohomological dimension and by vanishing results in the tautological intersection ring.
In this paper we prove that such dimension is at most $2g-2$.
We also prove an analogous bound for the moduli space of Riemann surfaces with marked points.
The key step is to show that the Dolbeault cohomological dimension of each stratum of translation surfaces is at most $g$.
In order to do that, we produce an exhaustion function whose complex Hessian has controlled index: the construction of
such a function relies on
some basic geometric properties of translation surfaces.
\end{abstract}

\maketitle

\tableofcontents

%%%%%%%%%%%%%%%%%%%%%%%%%%%%%%%%%%%%%%%%%%%%%%%%%%%%%%%%%%%%%%%%%%%%%%
%ù
\section{Introduction}

\subsection{Cohomological dimensions of $\M_{g,n}$}

The moduli space $\M_{g,n}$ of compact connected Riemann surfaces of genus
$g\geq 2$ with $n\geq 0$ distinct marked points is an orbifold and so it makes sense to speak of its
de Rham cohomological dimension $\cohdim_{dR}(\M_{g,n})$, that is the greatest degree for which its
de Rham cohomology with coefficients in some flat vector bundle does not vanish.

In a similar fashion,
as $\M_{g,n}$ can also be given the structure of a complex-analytic orbifold,
it makes sense to speak of Dolbeault cohomology
with coefficients in a holomorphic vector bundle and of
Dolbeault cohomological dimension $\cohdim_{Dol}$
(see Section \ref{subsection:Dolbeault} for a precise definition).

Since $\M_{g,n}$ is irreducible and not compact, clearly 
$\cohdim_{dR}(\M_{g,n})<\mathrm{dim}_{\RR}(\M_{g,n})=6g-6+2n$ and
$\cohdim_{Dol}(\M_{g,n})<\mathrm{dim}_{\CC}(\M_{g,n})=3g-3+n$
but the actual cohomological dimensions could be much smaller.

Indeed, using topological methods, 
Harer \cite{harer:virtual} proved that $\cohdim_{dR}(\M_{g,n})=4g-5+n+\eps_n$, where $\eps_0=0$ and
$\eps_n=1$ for $n>0$.

On the other hand, for Dolbeault cohomology the problem has not been settled yet.
Though Harer's theorem above implies $\cohdim_{Dol}(\M_{g,n})\geq g-2+\eps_n$ (see Section \ref{sec:dR-cohdim} below),
it seems that at present the estimate 
$g-2+\eps_n\leq \cohdim_{Dol}(\M_{g,n})
\leq 3g-4+\eps_n$ is the only available one for $g>5$.

\begin{question}[Looijenga]
Is $\cohdim_{Dol}(\M_{g,n})=g-2+\eps_n$ ?
\end{question}

The aim of this paper is to make some progress in the direction
of Looijenga's question: the following is our main result.

\begin{maintheorem}\label{mthm:cohdim}
For every $g\geq 2$ and $n\geq 0$, the Dolbeault cohomological dimension of the moduli space of Riemann surfaces satisfies
\[
\cohdim_{Dol}(\M_{g,n})\leq (g-2+\eps_n)+g\, .
\]
\end{maintheorem}

\begin{remark}
The moduli space $\M_{g,n}$ is also a Deligne-Mumford stack
and so also cohomology with coefficients in an algebraic coherent sheaf can be considered and an ``algebraic cohomological dimension'' $\cohdim_{alg}$ of $\M_{g,n}$ can be similarly defined.
Looijenga phrased an analogous conjecture in this setting,
namely whether $\cohdim_{alg}(\M_{g,n})= g-2+\eps_n$.
However, in this paper we will only deal with Dolbeault cohomological dimensions.
\end{remark}

\subsection{Elementary properties of Dolbeault cohomological dimension}

Even though Serre's GAGA theorems \cite{serre:gaga} establish an isomorphism between the cohomology of an %locally free
algebraic coherent sheaf 
on a complex projective manifold and of its associated analytic coherent sheaf,
%the Dolbeault cohomology of the associated holomorphic vector bundle, 
the quantities
$\cohdim_{alg}$ and $\cohdim_{Dol}$ need not agree when working
with a non-compact algebraic variety or Deligne-Mumford stack.
Serre himself produced an example of a smooth quasi-projective
complex surface $S$ with $\cohdim_{Dol}(S)=0$ but $\cohdim_{alg}(S)=1$, being $S$ Stein but not affine (see \cite{hartshorne:ample}, Chapter VI, Example 3.2).

Most naive considerations about algebraic and Dolbeault cohomological
dimensions are a consequence of a few basic properties of these invariants,
listed here below.

%\footnotetext{
%We will write $\cohdim_*$ to mean that the same statement holds both for $\cohdim_{alg}$ and $\cohdim_{Dol}$.  All the properties listed below can be easily translated in the world of complex-analytic orbifolds (resp. of Deligne-Mumford stacks), just by interpreting the word ``Stein'' (resp. ``affine'') by ``with a Stein coarse space'' (resp. ``with an affine coarse space'').
%}

\begin{itemize}
\item[(P1)]
A connected complex manifold $X$ is compact if and only if $\cohdim_{Dol}(X)=\dim_{\CC}(X)$; analogously, a smooth connected algebraic variety is complete if and only if $\cohdim_{alg}(X)=\dim_{\CC}(X)$ (both statements are a consequence of Serre
duality).
\item[(P2)]
An algebraic manifold $X$ is affine if and only if $\cohdim_{alg}(X)=0$;
analogously, a complex manifold $X$ is Stein if and only if $\cohdim_{Dol}(X)=0$ (Serre \cite{serre:fac}). Thus, $\cohdim_{alg}(X)=0\implies \cohdim_{Dol}(X)=0$, because
algebraic affine manifolds are Stein.
%being a closed holomorphic submanifold of some $\CC^N$ (which is Stein),
%an algebraic affine manifold $X$ is Stein 
%and so $\cohdim_{Dol}(X)=0$.
\item[(P3)]
If $Y\rar X$ is an affine algebraic morphism, then
$\cohdim_{alg}(Y)\leq\cohdim_{alg}(X)$
and $\cohdim_{Dol}(Y)\leq\cohdim_{Dol}(X)$
(see Lemma \ref{lemma:fibr}(c)).
\item[(P4)]
If $f:Y\rar X$ is a finite holomorphic map of complex manifolds, then
$\cohdim_{Dol}(Y)\leq\cohdim_{Dol}(X)$; moreover, equality holds if
$f$ is finite surjective (see Lemma \ref{lemma:fibr}(a)).
An analogous statement holds for $\cohdim_{alg}$, if $f$ is a finite morphism of complex
algebraic varieties. 
%In particular, this case applies to closed embeddings with $r=0$.
\item[(P5)]
If $f:Y\rar X$ is a proper surjective holomorphic submersion of relative dimension $r$,
then $\cohdim_{Dol}(Y)=\cohdim_{Dol}(X)+r$ (see Lemma \ref{lemma:fibr}(b)).
Analogous statement for $\cohdim_{alg}$, if $f$ a smooth proper surjective morphism between complex algebraic varieties.
\item[(P6)]
If $\Vfrak=\{V_\sigma\}$ is an open cover of $X$, then
$\cohdim_{Dol}(X)$ is bounded above by the maximum of
$\cohdim_{Dol}(V_{\sigma_0}\cap\dots\cap V_{\sigma_d})+d$, ranging over
non-empty intersections of distinct open subsets of $\Vfrak$ (see Lemma \ref{lemma:local-global}). The same statement for $\cohdim_{alg}$ holds, if $\Vfrak$ is an open cover for
the Zariski topology of the complex algebraic variety $X$. 
\item[(P7)]
If $X=X^0\supset X^1\supset\dots\supset X^k\supset X^{k+1}=\emptyset$ is a stratification
by closed algebraic subvarieties such that $X^{d+1}$ is a Cartier divisor
inside $X^d$, then $\cohdim_{alg}(X)$ is bounded above by the maximum of $\cohdim_{alg}(X^d\setminus X^{d+1})+d$ (see below).
\item[(P8)]
If $E\rar X$ is a holomorphic vector bundle on the complex manifold $X$ and $c_q(E)\neq 0$
in $H^{2q}(X;\CC)$, then $\cohdim_{Dol}(X)\geq q$.
If moreover $E$ and $X$ are algebraic, then $\cohdim_{alg}(X)\geq q$. 
(Both results follow by noticing that $c_q(E)$ can be lifted along
$H^{0,q}_{\ol{\pa}}(X;\Omega^{q,0}_X)\rar H^{2q}(X;\CC)$ in the former case
and along $H^q(X;\Omega^q_X)\rar H^{2q}(X;\CC)$ in the latter).
%\item[(P9)]
%If $X$ is K\"ahler and $Y$ is a compact complex manifold of dimension $q$
%with a finite map $f:Y\rar X$, then $\cohdim_{Dol}(X)\geq q$ (since the K\"ahler form $\omega$ on $X$ satisfies $0<q!\,\mathrm{vol}_\omega(Y)=[\omega]^q\cap [Y]$ and so $0\neq [\omega]^q\in H^{0,q}_{\ol{\pa}}(X;\Omega^{q,0}_X)$). The same conclusion holds for a complex-analytic orbifold $X$ with a quasi-projective coarse space.
\end{itemize}

A first glance at (P3) and (P5) above immediately leads us to deduce that
$\cohdim_{Dol}(\M_{g,1})=\cohdim_{Dol}(\M_g)+1$
and $\cohdim_{Dol}(\M_{g,n})\leq \cohdim_{Dol}(\M_g)+1$ for $n>1$
(and the same holds for $\cohdim_{alg}$).
In fact, the map $\M_{g,1}\rar\M_g$ that forgets the marked point is algebraic, smooth and proper of relative dimension $1$; whereas the map
$\M_{g,n+1}\rar\M_{g,n}$ that forgets the $(n+1)$-th marked point is algebraic affine, provided $n>0$.

These properties are discussed in more detail in Appendix \ref{sec:appendix}.

\begin{remark}\label{rmk:property7}
Property (P7) highlights a difference between $\cohdim_{alg}$ and $\cohdim_{Dol}$, the reason being that meromorphic functions 
admit a Laurent expansion and so they always leave a trace on their polar locus, namely a section of some algebraic coherent sheaf; by contrast, analytic functions can have essential singularities. The existence of such a ``trace'' in the algebraic setting is encoded in the long exact sequence for local cohomology (see \cite{sga2}).
In order to stress such difference and justify why we will estimate $\cohdim_{Dol}$ using (P6), we give below a quick proof of (P7).
\end{remark}

\begin{proof}[Proof of property (P7)]
We proceed by induction on $k\geq 0$.
The case $k=0$ being trivial, we assume the result true for stratifications with $k+1$ layers.

Let $X=X^0\supset\cdots\supset X^{k+1}\supset X^{k+2}=\emptyset$ a stratification by $k+2$ layers as in the hypotheses.
By induction applied to $X^1$, we have $\cohdim_{alg}(X^1)\leq
\max_{d\geq 1} \cohdim_{alg}(X^d\setminus X^{d+1})+d-1$.

Let $\mathcal{F}$ be an algebraic coherent sheaf on $X$. The local cohomology sheaf $\mathcal{H}^s_{X^1}(\mathcal{F})$ on $X$ is
quasi-coherent and supported on $X^1$ for $s=0,1$; moreover, it vanishes for $s>1$, because $X_1$ is a Cartier divisor in $X_0$
(for instance, see \cite{sga2}, Expos\'e II, Corollaire 4).
Moreover, the local-to-global spectral sequence $H^r(X;\mathcal{H}^s_{X^1}(\mathcal{F}))\implies H^{r+s}_{X^1}(X;\mathcal{F})$ implies that
$H^q_{X^1}(X;\mathcal{F})=0$ for $q>\cohdim_{alg}(X^1)+1$.
Finally, the long exact sequence
\[
\dots \rar H^q_{X^1}(X;\mathcal{F})\rar H^q(X;\mathcal{F})
\rar H^q(X\setminus X^1;\mathcal{F})\rar \dots
\]
shows that $H^q(X;\mathcal{F})=0$ for $q>\max\{\cohdim_{alg}(X^1)+1,\cohdim_{alg}(X\setminus X^1)\}$, that is for
$q>\max_{d\geq 0}\{\cohdim_{alg}(X^d\setminus X^{d+1})+d\}$. This concludes the proof.
\end{proof}

\subsection{Known results and evidence for the conjecture}

\subsubsection{Low genera cases}
The situation for the moduli spaces $\M_{0,n}$
of Riemann surfaces of genus $0$ with $n\geq 3$ distinct marked points and for $\M_{1,n}$ with $n\geq 1$ is completely understood.
Indeed, $\M_{0,n}$ is isomorphic to
$\{z\in (\CC\setminus\{0,1\})^{n-3}\,|\, z_i\neq z_j\ \text{if $i\neq j$}\}$ and so it is an affine algebraic variety; on the other hand,
$\M_{1,1}$ is dominated by $\M_{0,4}$ through an \'etale cover of degree $6$.
Hence, they have algebraic and so Dolbeault cohomological dimension $0$, and so does $\M_{1,n}$.

Since all Riemann surfaces of genus $2$ are hyperelliptic,
a finite \'etale cover of $\M_2$ is isomorphic to $\M_{0,6}$ and so again is algebraic affine. Thus, algebraic and Dolbeault cohomological dimensions are $0$ for $\M_2$, they are both $1$
for $\M_{2,1}$ and $\leq 1$ for $\M_{2,n}$ with $n>1$.

Similar considerations show that the hyperelliptic locus in $\M_g$ has always $\cohdim_{alg}=0$. Since non-hyperelliptic Riemann surfaces of genus $3$ can be canonically realized as smooth plane quartics (up to action of $\mathrm{PGL}_3$), the complement of the Cartier divisor consisting of hyperelliptic curves in $\M_3$ is the quotient of an affine variety by a reductive group and so it is affine.
Hence, $\cohdim_{alg}(\M_3)\leq 1$ by (P7).
Since it can be shown that $\M_3$ contains a compact Riemann surface (because its Satake compactification is projective and with boundary of codimension $2$), necessarily $\cohdim_{alg}(\M_3)=1$.

In order to deal with the cases of genus $4$ and $5$, a finer analysis is needed. This is done for $n=0$ in Fontanari-Looijenga \cite{fontanari-looijenga}
and in Fontanari-Pascolutti \cite{fontanari-pascolutti}, where an affine stratification
(using canonical models)
and an affine open cover (using $\theta$-functions) respectively are exhibited.

Thus, Looijenga's conjecture is settled for $g\leq 5$.

\subsubsection{De Rham cohomological dimension}\label{sec:dR-cohdim}

Harer's proof \cite{harer:virtual} that %$\cohdim_{dR}(\M_{0,n})=n-3$ and
$\cohdim_{dR}(\M_{g,n})=4g-5+\eps_n+n$ for $g\geq 1$ is truly topological and it does not seem possible to mimick it in the complex-analytic setting: it exploits two different but closely related versions of a cellularization of the moduli space
via ribbon graphs due to Harer-Mumford-Penner-Thurston (see \cite{bowditch-epstein:natural},
\cite{penner:decorated}, \cite{kontsevich:intersection}
and also \cite{mondello:triangulation}).

%Since flat complex vector bundles on a complex manifold $X$ are
%naturally holomorphic vector bundles,
A link between de Rham and Dolbeault cohomological dimension
of $X$ is provided by a twisted version of the 
Hodge-Fr\"olicher spectral sequence (see Lemma \ref{lemma:Dol-dR}),
which ensures that
\[
\cohdim_{dR}(X)\leq\cohdim_{Dol}(X)+\mathrm{dim}_{\CC}(X).
\]
% 
%   $E^{p,q}_1=H^{0,q}_{\bar{\pa}}(X;\Omega_X^{p,0}\otimes\mathbb{L})\implies H^{p+q}_{dR}(X;\mathbb{L})$,
%where $\mathbb{L}$ is a flat complex vector bundle on $X$ and 
%$(\Omega_X^{\bullet,0},\pa)$ is the holomorphic de Rham complex.
%It easily follows that $\cohdim_{dR}(X)\leq\cohdim_{Dol}(X)+\mathrm{dim}_{\CC}(X)$.
%
Thus, Harer's result implies that 
$\cohdim_{Dol}(\M_{g,n})\geq g-2+\eps_n$ for $g\geq 1$,
and so the bound conjectured by Looijenga is the smallest possible.
Conversely, proving Looijenga's conjecture would imply
Harer's upper bound for $\cohdim_{dR}(\M_{g,n})$.%the de Rham cohomological dimension.

\subsubsection{Complete subvarieties and stratifications}

The idea that the geometry of $\M_{g}$ could be better understood with the aid of geometrically meaningful stratifications is already in Arbarello \cite{arbarello}, who studied the so-called Weierstrass stratification, consisting of $g-1$ layers. By (P7), affineness of such locally closed strata would have implied Looijenga's conjecture. Even though the smallest stratum is affine
because it coincides with the hyperelliptic locus
and the top-dimensional stratum is also affine (as remarked in \cite{fontanari}), this turns out not to be the case in general
\cite{arbarello-mondello}.

A suitable modification of Arbarello's stratification was employed by Diaz \cite{diaz:complete} to show that a compact holomorphic subvariety of $\M_g$ has dimension at most $g-2$. Notice that this result too would be a consequence of Looijenga's conjecture by (P4).
%, since $\M_g$ is K\"ahler orbifold (alternatively, since $\M_g$ has a quasi-projective coarse space).
We also remark that Diaz's bound is known to be attained for $g=2$ for trivial reasons and for $g=3$ because $\M_3$ contains a complete Riemann surface, as remarked above; on the other hand, what happens for $g\geq 4$ is still unknown.

The idea (already present in Arbarello's paper) behind Diaz's proof is to show that each locally closed stratum is quasi-affine and so it cannot contain a compact holomorphic subvariety of positive dimension.
However, since there is no control on the cohomological dimension of such quasi-affine locally closed layers, such a stratification does not seem to say much about Looijenga's question.

\subsubsection{Stratifications and tautological classes}

Looijenga \cite{looijenga:tautological} exploited a variant of Diaz's stratification to show that certain characteristic classes on $\M_g$
(resp. on $\M_{g,1}$) of type $(q,q)$, also called ``tautological classes'', 
%which can be seen as Dolbeault cohomology classes in $H^{0,q}_{\ol{\pa}}(\M_g,\Omega_{\M_g}^q)$,
vanish in 
degrees $q>g-2$ (resp. $q>g-1$).
A key point of the proof is to show that, on the quasi-affine locally closed strata,
(suitable powers of) all the involved tautological line bundles trivialize and so their Chern classes vanish.

Looijenga's question formulated at the beginning is clearly an
amplification of this
vanishing theorem (proven to be sharp by Faber in \cite{faber}).
Further extensions of the same question
have also been formulated for suitable
partial compactifications of $\M_{g,n}$, such as the moduli space of
Riemann surfaces of compact type, 
or of irreducible Riemann surfaces,
or with rational tails, or with
at most $k$ rational components. 
For the last family of partially compactified moduli spaces, 
Ionel \cite{ionel} and Graber-Vakil \cite{graber-vakil} proved the analogous of Looijenga's vanishing; the parallel topological statements for de Rham cohomology follow from Harer's work and were analyzed in \cite{mondello:homotopical}.

Despite the relevant amount of geometric information that the intersection theory of such tautological classes carry, it is not clear whether such vanishing results allow to draw any conclusion on the cohomological dimension of $\M_{g,n}$.\\

The rather classical idea underlying all the above stratifications was to look at special classes of ramified covers of $\CC\PP^1$, which can be seen as meromorphic differentials with zero periods.
Using meromorphic differentials with real periods, Grushevsky-Krichever \cite{grushevsky-krichever} could construct a real-analytic foliation of $\M_{g,2}$ with holomorphic leaves and used it to reprove Diaz's bound.

The proof makes essential use of the holomorphicity of period coordinates and shares some similarities with the techniques employed in the present paper.
A more detailed account of the work of Diaz, Looijenga and Grushevsky-Krichever can be found in \cite{mondello:stratifications}.

\subsection{Strategy of the proof}

The basic idea to bound the Dolbeault cohomological dimension of a
complex manifold $X$
is to exploit Theorem \ref{thm:q-convex}, proven by Andreotti-Grauert \cite{andreotti-grauert}, which reduces the problem to exhibiting an exhaustion function $\exh$ (i.e. a proper real-valued function, bounded from below) on $X$ whose complex Hessian has controlled index.
More precisely, if the positivity index of $i\pa\ol{\pa}\exh$ is everywhere at least
$\dim_{\CC}(X)-q$, then the Dolbeault cohomology vanishes above degree $q$.

\begin{remark}
Approaching $\cohdim_{alg}$ of a complex algebraic manifold $X$
via exhaustion functions requires a careful study of the behavior at infinity: indeed, this is equivalent to considering a suitable projective compactification $\ol{X}$ of $X$ by a Cartier divisor $D=\ol{X}\setminus X$ at infinity and then looking
for a Hermitian metric $h$ on the line bundle $L=\Ocal_{\ol{X}}(D)$
whose curvature $-i\pa\ol{\pa}\log(h)$ has controlled index.
We will not discuss this approach any further in this paper.
\end{remark}

\subsubsection{Moduli of Abelian differentials}
Being unable to directly exhibiting a useful exhaustion function on $\M_{g,n}$, it turns out to be easier to first deal with a suitable projective bundle $\PP\Omega\M_{g,n}$ on $\M_{g,n}$ and then invoke (P5) to draw the wished conclusion for $\M_{g,n}$.
Such a $\PP\Omega\M_{g,n}$ is the projectivization of the moduli space $\Omega\M_{g,n}$ of non-zero Abelian differentials 
of type $(g,n)$:
%on an $n$-pointed Riemann surface of genus $g$.
a point of $\Omega\M_{g,n}$ represents a triple $(C,\vc{p},\ph)$,
where $C$ is a compact connected Riemann surface of genus $g$
with distinct marked points
$\vc{p}=(p_1,\dots,p_n)$ and $\ph$ is a non-zero Abelian differential (i.e. a holomorphic $(1,0)$-form) on $C$.
The map $\PP\Omega\M_{g,n}\rar\M_{g,n}$ that sends $(C,\vc{p},[\ph])$ to $(C,\vc{p})$ is clearly a holomorphic $\CC\PP^{g-1}$-bundle. Thus, our main result is actually equivalent to the following.

\begin{maintheorem}\label{mthm:hodge}
The Dolbeault cohomological dimension of the moduli space $\PP\Omega\M_{g,n}$ of projective Abelian differentials of type $(g,n)$ satisfies
\[
\cohdim_{Dol}(\PP\Omega\M_{g,n})\leq (2g-3+\eps_n)+g
\]
for all $g\geq 2$ and $n\geq 0$.
\end{maintheorem}

For technical reasons, it is more practical to work with the
moduli space $\PP\Omega\M'_{g,n}$
consisting of triples $(C,\vc{p},[\ph])$, where $C\in\M_g$,
$\vc{p}=(p_1,\dots,p_{n+2g-2})$ is a collection of points of $C$
such that $p_1,\dots,p_n$ are distinct, and $\ph$ is a non-zero Abelian differential on $C$ that vanishes on $p_{n+1},\dots,p_{n+2g-2}$.
The map $\PP\Omega\M'_{g,n}\rar\PP\Omega\M_{g,n}$ that forgets
the last $2g-2$ marked points is a finite surjective cover and so
the two spaces have the same Dolbeault cohomological dimension by (P4).

Unfortunately, we are unable to produce a satisfactory exhaustion function on $\PP\Omega\M'_{g,n}$, nor an approach via stratification using (P7) is available for $\cohdim_{Dol}$ (see Remark \ref{rmk:property7}).
Thus, in order to prove Theorem \ref{mthm:hodge}, 
we will use property (P6) of $\cohdim_{Dol}$ listed above; namely,
we will produce an open cover of $\PP\Omega\M'_{g,n}$ by suitably thickening
the strata described below
and we will show that each intersection of open sets in such a cover has a nice exhaustion function.

\subsubsection{Stratification}  
The moduli space $\PP\Omega\M'_{g,n}$ can be
stratified accordingly to the configuration of zeros of the Abelian differential,
or in other words accordingly to how the marked points collide: such information can be
kept track by a surjection $\sigma:\{1,2,\dots,n+2g-2\}\surj\{1,2,\dots,n+k\}$
that fixes $\{1,2,\dots,n\}$ pointwise.

Indeed, locally closed strata are the complex-algebraic loci
$\PP\Omega\M'_{g,n}(\sigma)$ inside $\PP\Omega\M'_{g,n}$
representing triples $(C,\vc{p},[\ph])$ such that
$p_i=p_{i'}\iff \sigma(i)=\sigma(i')$.

The hierarchy of such stratification is as expected: going to a deeper stratum corresponds to merging some marked points,
provided $p_1,\dots,p_n$ remain distinct.

\begin{remark}\label{rmk:zeroes}
Such a stratum $\PP\Omega\M'_{g,n}(\sigma)$ can be naturally identified
to the moduli space $\PP\Omega\M'_{g,n}(m_1,\dots,m_{n+k})$
of triples $(C,\vc{q},[\ph])$ such that $(C,\vc{q})\in\M_{g,n+k}$,
$[\ph]\in\PP H^{1,0}(C)$ and
%
%$\vc{q}=(q_1,\dots,q_{n+k})$ are distinct
%marked points of $C$ and
$\ord_{q_j}(\ph)=m_j$,
where $m_j=|\sigma^{-1}(j)\cap\{n+1,\dots,n+2g-2\}|$ for $j=1,\dots,n+k$.
%
%Clearly, the multiplicity datum $\vc{m}=(m_1,\dots,m_{n+k})$
%satisfies $m_1,\dots,m_n\geq 0$, $m_{n+1},\dots,m_{n+k}>0$ and
%$m_1+\dots+m_{n+k}=2g-2$.
\end{remark}

As a warm-up and a basic step towards Theorem \ref{mthm:hodge}, we will first 
exhibit exhaustion functions with controlled complex Hessian on such strata,
thus bounding their Dolbeault cohomological dimension and obtaining a result
that may be of interest on its own.
%
%, again by exhibiting a suitable exhaustion function.
%In any case, the following result could be of its own interest.

\begin{maintheorem}\label{mthm:strata}
The Dolbeault cohomological dimension of the stratum $\PP\Omega\M'_{g,n}(\sigma)$ of
projective Abelian differentials satisfies
\[
\cohdim_{Dol}\left(\PP\Omega\M'_{g,n}(\sigma)\right)\leq g
\]
for all $g\geq 2$ and $n\geq 0$ and for all $\sigma$.
%integers $m_1,\dots,m_n\geq 0$ and $m_{n+1},\dots,m_{n+k}>0$ such that $m_1+\dots+m_{n+k}=2g-2$.
\end{maintheorem}

By Remark \ref{rmk:zeroes}, the above result can be rephrased in terms
of the moduli spaces $\PP\Omega\M'_{g,n}(m_1,\dots,m_{n+k})$.\\

We stress that Theorem \ref{mthm:strata} is already non-optimal in genus $2$, since $\PP\Omega\M'_2(2)$ and $\PP\Omega\M'_2(1,1)$ are affine, and in genus $3$, since 
$\PP\Omega\M'_{3}(4)$ and $\PP\Omega\M'_3(3,1)$
are affine
(see \cite{looijenga-mondello}).
As an extension of Looijenga's question, it seems natural to wonder the following.

\begin{question}
What is the Dolbeault/algebraic cohomological dimension of the strata $\PP\Omega\M'_{g,n}(\sigma)$?
\end{question}

\begin{question}
Is the deepest stratum in $\PP\Omega\M'_{g}$ affine for every $g\geq 2$?
\end{question}

%
%Before attacking the whole $\PP\Omega\M'_{g,n}$, we would first like to discuss the case of a single stratum $\PP\Omega\M'_{g,n}(m_1,\dots,m_{n+k})$.

\subsubsection{Period coordinates}
What makes computations comfortable inside each locally closed stratum is its smoothness and the existence of the so-called period coordinates. Very concretely,
the universal family over a contractible neighbourhood $U$ of
$(C,\vc{p},\ph)\in\Omega\M'_{g,n}(\sigma)$ can be topologically identified to $(C,\vc{p})\times U\rar U$ and so, for each $u\in U$, the corresponding Abelian differential $\ph_u$ on $C\times\{u\}$ determines a class 
$(\ph_u)\in H^1(C,\vc{p};\CC)$. Up to shrinking $U$, the induced map $U\rar H^1(C,\vc{p};\CC)$ is a biholomorphism onto its image.
Thus, a $\CC$-basis of $H_1(C,\vc{p};\CC)$ gives local period coordinates near $(C,\vc{p},\ph)$.

\subsubsection{Geometric functions on strata}
A non-zero Abelian differential $\ph$ on $C$ determines a non-positively curved metric $|\ph|^2$ on the Riemann surface with conical singularities at the zeros of $\ph$. Thus, there are a number of invariants that can be extracted from a point $(C,\vc{p},\ph)$
of $\Omega\M'_{g,n}(\sigma)$: for instance, the total area $A(\ph)$ of the metric $|\ph|^2$.
Once $\sigma$ and so the stratum are fixed, another such geometric function
is the systole $\ell_{sys}(\ph)$, namely the length (with respect to $|\ph|^2$) of the shortest
{\it{nontrivial}} geodesic path on $C$ joining two marked points.

The product $A\cdot\ell_{sys}^{-2}$ defines a positive real-valued function on $\PP\Omega\M'_{g,n}(\sigma)$ and it follows from Proposition 1 in \cite{kms:ergodicity} that $A\cdot\ell_{sys}^{-2}$ is an exhaustion function.
It is easy to realize that the (distributional) complex Hessian of $A\cdot\ell_{sys}^{-2}$ is not positive enough for our purposes, since it has at most $g$ positive eigenvalues. Thus, we will improve the situation as follows.

Given a collection $B$ of arcs joining marked points of $C$, we define
$\ell_B^{-2}(\ph):=\sum_{\gamma\in B}\ell^{-2}_\gamma(\ph)$,
where $\ell_\gamma(\ph)$ is the length of the $\ph$-shortest path homotopic to $\gamma$. 

Taking the sup of $\ell_B^{-2}$ over all collections $B$ of arcs that form a
$\RR$-basis of $H_1(C,\vc{p};\RR)$ defines a function
$\ell_\Bcal^{-2}:\Omega\M'_{g,n}(\sigma)\rar\RR$,
which is greater than $\ell_{sys}^{-2}$ and enjoys the same homogeneity property. Hence,
$\log(A\cdot\ell_\Bcal^{-2}):\PP\Omega\M'_{g,n}(\sigma)\rar\RR$ is again a well-defined exhaustion function: this is the function we wish to analyze.

\subsubsection{Complex Hessian of geometric functions}
Using period coordinates on the stratum $\Omega\M'_{g,n}(\sigma)$ it is easy to see that the 
complex Hessian $i\pa\ol{\pa}A$ at $(C,\vc{p},\ph)$ has $g$ negative directions corresponding to $H^{0,1}(C)$.
On the other hand, if the value of $\ell^{-2}_\Bcal(\ph)$ is attained
at a basis $B$, then every arc $\gamma\in B$ is realized by a smooth geodesic for $|\ph|^2$. Thus, $\ell^{-2}_\gamma(\ph)=|\int_\gamma \ph|^{-2}$ and so $i\pa\ol{\pa}\ell^{-2}_B$ is positive-definite in a neighbourhood of $\ph$.
As a consequence, $i\pa\ol{\pa}\ell^{-2}_\Bcal$ is positive-definite in the distributional sense.
A little computation shows that
the complex Hessian of $\log(A\cdot\ell^{-2}_\Bcal)$ on $\PP\Omega\M'_{g,n}(\sigma)$ has at worst $g$ non-positive directions, and this
proves Theorem \ref{mthm:strata}.

\begin{remark}
The fact that $i\pa\ol{\pa}A$ has index of negativity $g$ seems precisely responsible
for all the estimates being $g$ steps off the optimal conjectural ones.
\end{remark}

\subsubsection{Thickening the stratification}

The exhaustion function $\log(A\cdot\ell_\Bcal^{-2})$ on the stratum
$\PP\Omega\M'_{g,n}(\sigma)$
can be extended on a neighbourhood of its inside
$\PP\Omega\M'_{g,n}$.
% as the function $\eta:=A\cdot \ell^{-2}_{\Bcal^{out}}$.
Since the area functional $A$ is already defined on the whole $\Omega\M'_{g,n}$,
it is enough to define an extension of $\ell_\Bcal^{-2}$, which requires a little preparation.

Consider the universal family of Riemann surfaces over a small contractible neighbourhood $U\subset\Omega\M'_{g,n}$ of a point on the stratum $\Omega\M'_{g,n}(\sigma)$, which can be topological trivialized as $C\times U\rar U$.
We can view the data on $U$ as a family
$U\ni u\mapsto (J_u,\vc{p}(u),\ph_u)$, where
$J_u$ is a complex structure on $C$,
the $1$-form $\ph_u$ on $C$ is $J_u$-holomorphic and
and $p_1(u),\dots,p_{n+2g-2}(u)\in C$ are the marked points.
Being $u\in U$ close to the stratum $\Omega\M'_{g,n}(\sigma)$,
the marked points $p_{i}(u)$ are gathered in $n+k$ groups, contained in $n+k$ small disjoint disks $\disk^1(\ph_u),\dots,\disk^{n+k}(\ph_u)\subset C\times\{u\}$.
Thus, we can split the set of arcs joining marked points on $C\times\{u\}$ into the subset of ``inner'' ones, contained
inside $\disk(\ph_u)=\bigcup_{j=1}^{n+k} \disk^j(\ph_u)$, and the subset of ``outer'' ones, not contained in $\disk(\ph_u)$.
For example, see Figure \ref{fig:clash}.

We can define
$\ell^{-2}_{\Bcal^{out}}(\ph_u):=\mathrm{sup}\ \ell^{-2}_{B^{out}}(\ph_u)$, where $B^{out}$ ranges
over all bases of $H_1(C,\disk(\ph_u);\RR)$ made of $\ph_u$-outer segments.
Since $\ell^{-2}_{\Bcal^{out}}$ restricts to $\ell^{-2}_\Bcal$ on the stratum $\Omega\M'_{g,n}(\sigma)$,
the function $\eta=A\cdot\ell^{-2}_{\Bcal^{out}}$ is an extension of $A\cdot\ell^{-2}_\Bcal$.

On the same neighbourhood of $\Omega\M'_{g,n}(\sigma)$
we can also produce a function $\zeta$ whose complex Hessian is positive in the directions transverse to the stratum by letting $\zeta(\ph_u)=
\sup\ \zeta_{B^{inn}}(\ph_u)$, where $B^{inn}$ ranges over 
all bases of $H_1(\disk(\ph_u),\vc{p}(u);\RR)$ made of $\ph_u$-inner segments, and $\zeta_{B^{inn}}(\ph_u)=
\ell^{-2}_{\Bcal^{out}}(\ph_u)\sum_{\beta\in B^{inn}}|\int_\beta\ph_u|^2$.
See Figure \ref{fig:bases} for an example of inner and outer bases.

Now, the function $\eta+\zeta$ is proper ``along the stratum'', since $\eta$ is; moreover, it can
be easily shown that its complex Hessian has non-positivity index $\leq g$. However, $\eta+\zeta$ is not proper ``transversely to the stratum''.
This problem can be fixed by choosing a suitable convex 
and quickly diverging real function $\chi$
and using $\exh_\sigma=\log(\eta+\chi\circ\zeta)$,
which is an exhaustion function on a suitable thickening $W_\sigma$ of the stratum $\PP\Omega\M'_{g,n}(\sigma)$.
Moreover, by controlling the directions along which the $\exh_\sigma$'s 
are strictly subharmonic, we can also show that every non-empty intersection $W_{\sigma_0}\cap\dots\cap W_{\sigma_d}$ has $\cohdim_{Dol}\leq g$
by applying Theorem \ref{thm:q-convex} to the function $\exh_{\sigma_0}+\dots+\exh_{\sigma_d}$.

Finally, such thickenings can be carefully tailored 
so that 
$W_\sigma$ and $W_\tau$ intersect if and only if
$\PP\Omega\M'_{g,n}(\sigma)$ is contained in the closure of
$\PP\Omega\M'_{g,n}(\tau)$ or vice versa (see, for example, Figure \ref{fig:thickening}).
As a consequence, the nerve of the open cover $\Wfrak=\{W_\sigma\}$ 
of $\PP\Omega\M'_{g,n}$ has dimension $2g-3+\eps_n$, and Theorem \ref{mthm:hodge} follows from (P6) using a Mayer-Vietoris spectral sequence.\\

As a by-product of the above construction, we can also estimate $\cohdim_{Dol}$
of a closed stratum in $\PP\Omega\M'_{g,n}$.

\begin{maintheorem}\label{mthm:closed}
The Dolbeault cohomological dimension of the closure of
the stratum $\PP\Omega\M'_{g,n}(\tau)$ inside $\PP\Omega\M'_{g,n}$
is bounded above by $(k-1+\eps_n)+g$
%
% the moduli space
%$\PP\Omega\M_{g,n}(\ol{\vc{m}})$ of projective Abelian differentials of type $(g,n)$
%such that $\ord_{p_i}\geq m_i$ satisfies
%\[
%\cohdim_{Dol}(\PP\Omega\M'_{g,n}(\ol{\vc{m}}))\leq g-1+k+\eps_n
%\]
for all $g\geq 2$, $n\geq 0$ and
for all surjections $\tau:\{1,\dots,2g-2+n\}\surj\{1,\dots,n+k\}$
that fix $\{1,2,\dots,n\}$.
\end{maintheorem}

\subsection{Acknowledgements}
I am grateful to Enrico Arbarello for many stimulating discussions
and for his constant encouragement and wise suggestions.
I would also like to thank Jean-Pierre Demailly
for very useful clarifications about $q$-convex functions and
vanishing theorems; Simone Diverio, always ready to
explain me things about complex geometry; and
Eduard Looijenga for helpful conversations about this topic.
Finally, I am thankful to the anonymous referee(s) for
carefully reading the paper and for pointing out meaningful
bibliographical references.

I am part of the INdAM national research group GNSAGA. My research has been partially supported by the MIUR grant FIRB 2010 ``Low-dimensional
geometry and topology'' ({RBFR10GHHH\_003}).

\section{Moduli spaces of Abelian differentials}\label{sec:moduli}
%%%%%%%%%%%%%%%%%%%%%%%%%%%%%%%%%%%%%%%

In this section we set some notation and review basic notions, such as moduli spaces
of Riemann surfaces, the Hodge bundle and its stratification by configurations of zeros,
the hierarchy of strata, smoothness and local period coordinates for the closed strata.

\subsection{Moduli of Riemann surfaces}

Let $g\geq 2$ and $n\geq 0$ and denote by $\M_{g,n}$ the
{\it{moduli space of Riemann surfaces of genus $g$
with $n$ distinct marked points}}
and by $\pi:\C_{g,n}\rar\M_{g,n}$ be the {\it{universal family}}.
A point in $\M_{g,n}$ will thus represent a couple $(C,\vc{p})$ up to isomorphism,
where $C$ is a compact connected Riemann
surface of genus $g$ and the injective map $\vc{p}:\num{n}=\{1,\dots,n\}\hra C$ is the marking.

\begin{remark}\label{remark:stack}
$\M_{g,n}$ has a natural structure of complex-analytic orbifold, and
even of Deligne-Mumford stack, which is a global quotient of a
smooth variety $\wti{\M}_{g,n}$ by a finite group $G$.
All constructions can be intended to be performed $G$-equivariantly on $\wti{\M}_{g,n}$.
Indeed, if $\rho:\wti{\M}_{g,n}\rar \M_{g,n}$ is the covering map and $E\rar\M_{g,n}$,
is a holomorphic vector bundle, 
then $\rho^*E\rar\wti{\M}_{g,n}$ is
a $G$-equivariant holomorphic vector bundle
and
$H^{0,q}_{\bar\pa}(\M_{g,n};E):=
H^{0,q}_{\bar\pa}(\wti{\M}_{g,n};\rho^*E)^{G}$
is well-defined and independent of the choice of the finite Galois cover.
Similar considerations hold for the cohomology of $\C_{g,n}$ and all the other moduli spaces
that appear in this article with coefficients in some analytic or algebraic coherent sheaf.
\end{remark}

\subsubsection{Allowing marked points to coalesce}

Here is a variant of the previous construction which is useful for technical purposes.
% is to allow certain points to coalesce.
For every $k\geq 0$ we will denote by $\M_{g,n}^k$ the 
{\it{moduli space of Riemann surface of genus $g$ with $(n,k)$-marked points}} that parametrizes isomorphism classes of
couples $(C,\vc{p})$, where $C$
is a compact connected Riemann surfaces of genus $g$ 
and $\vc{p}:\num{n+k}\rar C$ is a map whose restriction to
$\num{n}\subseteq\num{n+k}$ is injective.

Forgetting the
last $k$ marked points defines a map $f_{\num{k}}:\M_{g,n}^k\rar\M_{g,n}$,
so that $\M_{g,n}^k$ can be identified to the $k$-th fiber product $\C_{g,n}\times_{\M_{g,n}}\times\dots
\times_{\M_{g,n}}\C_{g,n}$ and the map $\M_{g,n}^{k+1}\rar\M_{g,n}^k$ that forgets the last marked point can be identified
to the universal family $\pi_k:\C_{g,n}^k\rar\M_{g,n}^k$.
So, analogous considerations as in Remark~\ref{remark:stack} apply in this case.

Let $\Sfrak_n(k,l)$ the set of surjections $\sigma:\num{n+k}\surj\num{n+l}$ which restrict to the identity on $\num{n}$.
For each $\sigma\in\Sfrak_n(k,l)$, we obtain a map
$b_\sigma:\M_{g,n}^l\rar\M_{g,n}^k$ by letting $b_\sigma(C,\vc{q})=(C,\vc{q}\circ\sigma)$,
where $\vc{q}:\num{n+l}\rar C$. We denote by $\delta_\sigma\subset\M_{g,n}^k$ the closed image of $b_\sigma$.

If $\sigma'=\upsilon\circ\sigma$ for some $\upsilon\in\Sfrak_n(l,h)$, then 
$\delta_{\sigma'}\subseteq\delta_\sigma$ and
we will write $\sigma'\preceq\sigma$.
Notice that $\sigma'\preceq\sigma$ and $\sigma\preceq\sigma'$ simultaneously hold if and only
if $\sigma'=\upsilon\circ\sigma$ for some bijection $\upsilon\in\Sfrak_n(l,l)$: in this case we will write $\sigma\sim\sigma'$.

%\begin{notation}
%As $\delta_\sigma=\delta_{\tau\circ\sigma}$ for every permutation $\tau\in\Sfrak_n(l)$ of $\num{n+l}$ that restricts to the identity on $\num{n}$, we will
%select a preferred surjection in each $\Sfrak_n(l)$-orbit: we call $\sigma$ a
%{\it{lexicographic surjection}} if for every $i=1,\dots,k$ there exists a $j=1,\dots,l$ such that
%$\sigma(\{n+1,\dots,n+i\})=\{n+1,\dots,n+j\}$.
%We will denote by $\Sfrak^{lex}_n(k,l)\subset \Sfrak_n(k,l)$ the subset of lexicographic surjections.
%\end{notation}

For $i<j$ and $j>n$, it will be occasionally useful to denote by $\delta_{i,j}$ the Cartier divisor of $\M_{g,n}^k$ on which $p_i=p_j$
and by $\delta$ the union of all such $\delta_{i,j}$.

%\subsection{The Hodge bundle}
\subsection{Stratification of the Hodge bundle}\label{subsection:stratification}

Consider
$\omega_{\pi_k}\rar\C_{g,n}^k$ the $\pi_k$-vertical holomorphic cotangent bundle.
The so-called {\it{Hodge bundle}} $(\pi_k)_*(\omega_{\pi_k})\rar \M^k_{g,n}$
is a holomorphic vector bundle of rank $g$.

\begin{definition}
The {\it{moduli space of Abelian differentials
$\Omega\M^k_{g,n}$ of type $(g,n,k)$}}
%on $n$-marked Riemann surfaces of genus $g$
is the complement of the zero section inside $(\pi_k)_*(\omega_{\pi_k})$.
\end{definition}

A point of $\Omega\M^k_{g,n}$
represents a triple $(C,\vc{p},\ph)$ up to isomorphism, where $\ph$
is a non-zero holomorphic $(1,0)$-form on the $\vc{p}$-marked compact Riemann surface $C$ of genus $g$.

%We similarly denote the moduli space of Abelian differentials
%$\Omega\M_{g,n}^k$ the complement of the zero section inside the Hodge bundle $\pi_{k,*}\omega_{\pi_k}$ over $\M_{g,n}^k$.
%so that $\Hcal_{g,n}^k=f_{\ul{k}}^*\Hcal_{g,n}$.
%We will use the symbols $\Hcal_{g,n}^\ze$ and $(\Hcal_{g,n}^k)^\ze$ to denote the complement of the zero sections.

%For technical reasons, it is useful to have the zeroes of the Abelian differential marked.

\begin{definition}
A {\it{multiplicity datum in genus $g$}} is an
$\vc{m}:\num{n+k}\rar\NN$ such that 
$m_i>0$ for all $i>n$ and
$\sum_{i=1}^k m_i=2g-2$. The
locally closed locus
%with marked zeroes of multiplicities $\vc{m}$}} 
\[
\Omega\M'_{g,n}(\vc{m}):=\big\{(C,\vc{p},\ph)\in\Omega\M_{g,n}^k\, \big|\,C\in\M_{g,n}^k,\
\ \ord_{p_i}(\ph)= m_i\quad \forall i\,\big\} 
\]
inside $\Omega\M_{g,n}^k$ is called
{\it{moduli space of Abelian differentials of type $(g,n,\vc{m})$}}.
Its closure inside $\Omega\M'_{g,n}$ is
\[
\Omega\M'_{g,n}(\ol{\vc{m}})
:=
\big\{(C,\vc{p},\ph)\in\Omega\M_{g,n}^k\, \big|\,C\in\M_{g,n}^k,\
\ \ord_{p_i}(\ph)\geq m_i\quad \forall i\,\big\} 
\]
and we
will call it the moduli space
of Abelian differentials of type $(g,n,\ol{\vc{m}})$.
\end{definition}

%
%Inside $\Hcal_{g,n}^{2g-2}$ define the locus
%\[
%(\Hcal'_{g,n})^\ze :=\big\{(C,P,\ph)\in\Hcal^{2g-2}_{g,n}\,\big|\,\ph\neq 0,
%\ \text{and}\ \ph(p_i)=0\quad \forall i>n\,\big\}
%\]
%More generally,
%given $m:\ul{n+k}\rar \NN$ such that 
%$|m|:=\sum_{i=1}^k m_i=2g-2$ and
%$m_i>0$ for $i>n$, we define
%\[
%\Hcal'_n(m)^\ze :=\big\{(C,P,\ph)\in\Hcal_{g,n}^k\, \big|\,C\in\M_{g,n}^k,\
%\ \ord_{p_i}(\ph)= m_i\quad \forall i\,\big\} 
%\]
%inside $\Hcal_{g,n}^k$.
For brevity, the {\it{moduli space 
of Abelian differentials of type $(g,n)$ with marked zeros}}
$\Omega\M'_{g,n}(\ol{\vc{\mo}})$ corresponding to
$\vc{\mo}=(0^n,1^{2g-2})$ will be denoted
simply by $\Omega\M'_{g,n}$.
%

%For brevity, we will denote 
%the {\it{moduli space
%$\Omega\M'_{g,n}(\ol{0^n,1^{2g-2}})$ 
%of Abelian differentials of type $(g,n)$ with marked zeros}}
%simply by $\Omega\M'_{g,n}$. 
%is the closure of $\Omega\M'_{g,n}(0^n,1^{2g-2})$ inside $\Omega\M_{g,n}^{2g-2}$,
Notice that such a moduli space maps finitely over the moduli space $\Omega\M_{g,n}$ of Abelian differentials of type $(g,n)$
with {\it{unmarked}} zeros
as a $\Sfrak_{2g-2}$-branched cover.

%Notice that rescaling the differential $\ph$ induces a $\CC^*$-action
%on all these spaces.
\begin{definition}
The {\it{moduli space of projective Abelian differentials of type $(g,n,\vc{m})$}}
is the quotient $\PP\Omega\M'_{g,n}(\vc{m})$ of $\Omega\M'_{g,n}(\vc{m})$ by $\CC^*$ that acts by rescaling the Abelian differential.
Analogously we define $\PP\Omega\M_{g,n}^k$, $\PP\Omega\M'_{g,n}$ and $\PP\Omega\M'_{g,n}(\ol{\vc{m}})$. 
\end{definition} 
%and by
%\[
%\Hcal'_n(m):=\big\{
%(C,P,\ph)\in\Hcal_{g,n}^k\,\big|\,
%C\in\M_{g,n}^k,
%\,\ mP\in |K_C|,
%\,\ \ph\in H^0(C,K_C(-mP))
%\big\}
%\]
%where $mP$ is the effective divisor $m_1 p_1+\dots+m_{n+k} p_{n+k}$ on $C$.
%Clearly, $\Hcal'_n(m)^\ze\subset\Hcal'_n(m)$ coincides with the locus where $\ph\neq 0$;
%indeed, $\Hcal'_n(m)$ can be identified with the total space of the
%tautological
%line bundle $\Ocal_{\PP\Hcal'_n(m)}(-1)$ over $\PP\Hcal'_n(m)$.
%Similar notations will be used for $\Hcal_{g,n}^\ze,(\Hcal_{g,n}^k)^\ze, (\Hcal'_{g,n})^\ze$.

%
Notice that the natural projection identifies
$\PP\Omega\M'_{g,n}(\vc{m})$ with a subset of $\M_{g,n}^k$.
The image of $\Omega\M'_{g,n}(\vc{m})$ through the forgetful map
$\Omega\M_{g,n}^k\rar\Omega\M_{g,n}$ will be denoted by $\Omega\M_{g,n}(\vc{m})$, and similarly
$\PP\Omega\M_{g,n}(\vc{m})$ will be the corresponding locus inside $\PP\Omega\M_{g,n}$.
Thus, $\Omega\M_{g,n}(\vc{m})=\Omega\M_{g,n}(\vc{m}\circ\sigma)$ for every $\sigma\in\Sfrak_n(k,k)$.
The induced map $\PP\Omega\M'_{g,n}(\ol{\vc{m}})\rar \PP\Omega\M_{g,n}(\ol{\vc{m}})$ is a
Galois cover with group $\aut_n(\vc{m})=\{\lambda\in\Sfrak_n(k,k)\,|\,m_{\lambda(i)}=m_i\}$,
that branches away from $\PP\Omega\M_{g,n}(\vc{m})$.

\medskip

The loci $\PP\Omega\M'_{g,n}(\vc{m})$ and $\PP\Omega\M_{g,n}(\vc{m})$ 
are part of an algebraic stratification
of $\PP\Omega\M'_{g,n}$ 
and of $\PP\Omega\M_{g,n}$ respectively
that we want to describe.

By abuse of notation, we will denote by $\omega_i$
both the vertical cotangent line bundle $\omega_{i}\rar \M_{g,n}^k$
associated to the map $f_i$ 
the forgets the $i$-th marking
and its pull-back to $\PP\Omega\M_{g,n}^k$ or to any locus $\PP\Omega\M'_{g,n}(\vc{m})$.
We will write $J^r\omega_i$ for the $r$-th jet bundle (relative
to the family $f_i$) associated to $\omega_i$.
As usual $J^0\omega_i=\omega_i$ and there is a natural exact sequence
\[
0\rar  \omega_i^{\otimes (r+1)}\otimes\omega_i \rar J^{r+1}\omega_i \rar J^r\omega_i \rar 0
\]

\begin{notation}
We denote simply by $\Ocal(-1)$ the tautological line bundle
$\Ocal_{\PP\Omega\M_{g,n}^k}(-1)$ on $\PP\Omega\M_{g,n}^k$
and by $\Ocal(1)$ its dual.
For simplicity, we will use the same symbol for the restriction of
the line bundle to any locus inside $\PP\Omega\M_{g,n}^k$.
\end{notation}

Here we want to show the following.

\begin{lemma}[Chow classes of strata]
The moduli space ${\PP\Omega\M'_{g,n}(\ol{\vc{m}})}$ 
of Abelian differentials of type $(g,n,\ol{\vc{m}})$
is a local complete intersection inside $\PP\Omega\M_{g,n}^k$
and its class in the Chow ring $A^*(\PP\Omega\M_{g,n}^k)$ is
\[
\left[{\PP\Omega\M'_{g,n}(\ol{\vc{m}})}\right]=\prod_{i=1}^{n+k} 
\ \prod_{r=1}^{m_i}
\left(\h+r\, c_1(\omega_i)\right)
\]
where $\h=c_1(\Ocal(1))$.
The complement 
of $\PP\Omega\M'_{g,n}(\vc{m})$ inside its closure is
the Cartier divisor ${\PP\Omega\M'_{g,n}(\ol{\vc{m}})}\cap\delta$,
and so $\PP\Omega\M'_{g,n}(\vc{m})\subset\PP\Omega\M_{g,n}^k$ is locally closed.
\end{lemma}
\begin{proof}
The closed locus  $\PP\Omega\M'_{g,n}(\ol{\vc{m}})$ inside 
$\PP\Omega\M_{g,n}^k$ 
can be described as
\[
{\PP\Omega\M'_{g,n}(\ol{\vc{m}})}=\bigcap_{m_i>0} Z(\Ev_i^{m_i-1})=:Z(\Ev^{\vc{m}})
\]
where $Z$ denotes the zero locus
and $\Ev_i^{m_i-1}$ and $\Ev^{\vc{m}}$ are the evaluation maps
\begin{align*}
\Ev_i^{m_i-1} : \ & \Ocal(-1) \lra J^{m_i-1}\omega_i\\
\Ev^{\vc{m}}  : \ & \Ocal(-1) \lra
\bigoplus_{m_i>0} J^{m_i-1}\omega_i
\end{align*}

Thus we have described ${\PP\Omega\M'_{g,n}(\ol{\vc{m}})}$ as the zero locus
of a section of the holomorphic vector bundle
$\bigoplus_{m_i>0}
J^{m_i-1}\omega_i\otimes\Ocal(1)$
on $\PP\Omega\M_{g,n}^k$ of
rank $\sum_i m_i=2g-2$. As $\mathrm{dim}(\PP\Omega\M_{g,n}^k)=4g-4+n+k$,
we expect ${\PP\Omega\M'_{g,n}(\ol{\vc{m}})}$ to have pure dimension $2g-2+n+k$.
We will see in Section \ref{sec:infinitesimal} that this is indeed the case, and so we are done.
\end{proof}

Pushing down the above class of $\PP\Omega\M'_{g,0}(\vc{m})$ to $\M_{g,k}$ it is possible to recover a formula by
Chen (see \cite{chen:strata}, Proposition 2.3).
In a similar fashion, we can reobtain Proposition 3.1
of \cite{chen:strata} by
pushing down the class of $\PP\Omega\M'_{g,0}(2,1^{2g-4})$
to $\PP\Omega\M_{g,0}$; in the same paper, the author
also computes
the class of the closure of $\PP\Omega\M'_{g,0}(2,1^{2g-4})$
inside the Deligne-Mumford compactification of $\PP\Omega\M_{g,0}$.
%(see Section \ref{sec:DM}).
%
%
% computes the class of 
%$\PP\Omega\M'_{g,0}(\vc{m})$ inside $\M_{g,k}$

\subsection{Hierarchy of strata}

Let $k$ and $\vc{m}$ be as above.
For every $\sigma\in\Sfrak_n(k,l)$,
there is an identification
\[
{\PP\Omega\M'_{g,n}(\ol{\sigma_*\vc{m}})}\arr{\sim}{\lra} 
{\PP\Omega\M'_{g,n}(\ol{\vc{m}})}\cap\delta_\sigma
\]
given by $(C,\vc{q},[\ph])\mapsto (C,\vc{q}\circ\sigma,[\ph])$, where
$\sigma_* \vc{m}:\num{n+l}\rar\NN$ is defined as
$(\sigma_* \vc{m})_j:=\sum_{\sigma(i)=j}m_i$.

The closed substratum $\PP\Omega\M'_{g,n}(\ol{\vc{m}})\cap\delta_{\sigma}$
coincides with $\PP\Omega\M'_{g,n}(\ol{\vc{m}})\cap\delta_{\sigma'}$ if and only if $\sigma\sim\sigma'$.

\begin{notation}
The set %${\vspace{0.2cm}\Sfrak_n(l,l)\vspace{-0.2cm}}\backslash \Sfrak_n(k,l)$ 
of such equivalence classes can be identified
to the subset $\Sfrak_n^{ord}(k,l)\subset\Sfrak_n(k,l)$ of {\it{ordered surjections}}, i.e.
of $\sigma\in\Sfrak_n(k,l)$ such that $\min \sigma^{-1}(j)<\min\sigma^{-1}(j')\iff j<j'$.
\end{notation}

As a consequence
\[
{\PP\Omega\M'_{g,n}(\ol{\vc{m}})}
=\bigsqcup_{d=0}^{k-1+\eps_n} \PP\Omega\M'_{g,n}(\vc{m})^d,
\quad
\text{where}
\ \eps_n=
\begin{cases}
0 & \text{if $n=0$}\\
1 & \text{if $n>0$}
\end{cases}
\]
where
\[
\PP\Omega\M'_{g,n}(\vc{m})^d \ \cong
\bigsqcup_{\sigma\in\Sfrak^{ord}_n(k,k-d)} \PP\Omega\M'_{g,n}(\sigma_*\vc{m})\,
\]
is the locally closed stratum of codimension $d$.
\begin{notation}
As $n$ is understood, we will use the simplified symbols
$\Sfrak$ for $\bigcup_{d=0}^{2g-3+\eps_n} \Sfrak^{ord}_n(2g-2,2g-2-d)$
and we will also say that the surjection
$\sigma\in\Sfrak$ has {\it{depth $d(\sigma)=d$}} if $\sigma\in\Sfrak_n(2g-2,2g-2-d)$.
Moreover, 
for every $\sigma\in\Sfrak$
we will write $\PP\Omega\M'_{g,n}(\sigma)$
for $\PP\Omega\M'_{g,n}(\sigma_*\vc{\mo})$, where $\vc{\mo}=(0^n,1^{2g-2})$ corresponds
to the unique open stratum $\PP\Omega\M'_{g,n}(\vc{\mo})$ inside $\PP\Omega\M'_{g,n}$.
\end{notation}

In particular, the whole moduli space of Abelian differentials of type $(g,n)$ with
marked zeros is stratified as
\[
\PP\Omega\M'_{g,n} \cong
\bigsqcup_{\sigma\in\Sfrak} \PP\Omega\M'_{g,n}(\sigma)
\, .
\]
Quite similarly, for the moduli space of Abelian differentials with unmarked zeros
we have $\PP\Omega\M_{g,n}=\bigsqcup_{d=0}^{2g-3+\eps_n} (\PP\Omega\M_{g,n})^d$,
where
\[
(\PP\Omega\M_{g,n})^d\ =
\bigsqcup_{
\substack{m_1,\dots,m_n \\
m_{n+1}\geq\dots\geq m_{n+2g-2-d}}} \PP\Omega\M_{g,n}(m_1,\dots,m_{n+2g-2-d})\, .
\]
As a consequence of the above description, 
we have the following.

\begin{lemma}[Closed substrata are complete intersections]
The moduli space $\PP\Omega\M'_{g,n}(\ol{\sigma_*\vc{m}})$ is a complete intersection inside
${\PP\Omega\M'_{g,n}(\ol{\vc{m}})}$. In particular, the complement of $\PP\Omega\M'_{g,n}(\sigma_*\vc{m})$
inside its closure is a Cartier divisor and so
the complement of $\PP\Omega\M_{g,n}(\sigma_*\vc{m})$ 
inside ${\PP\Omega\M_{g,n}(\ol{\sigma_*\vc{m}})}$
is $\QQ$-Cartier.
\end{lemma}
\begin{proof}
The first claim follows from the fact that $\delta_{i,j}$ is Cartier and so $\delta_\sigma$ is a complete intersection. The complement
of $\PP\Omega\M'_{g,n}(\sigma_*\vc{m})$ in its closure is a union of smaller strata and so
it is a union of Cartier divisors, and so Cartier.
Finally,
the forgetful map ${\PP\Omega\M'_{g,n}(\ol{\sigma_*\vc{m}})}\rar{\PP\Omega\M_{g,n}(\ol{\vc{m}})}$ is a finite
(ramified) Galois cover that preserves the locus where some zeros coalesce: this implies the third claim. 
\end{proof}
 
In terms of evaluation maps and of the classes $\h$ and $c_1(\omega_i)$, we have the following.

\begin{lemma}[Degeneration divisor of a stratum]
The locus $\delta
\cap{\PP\Omega\M'_{g,n}(\ol{\vc{m}})}$
where the multiplicity datum of zeros of the Abelian differential
is more degenerate than $\vc{m}$
is the support of the effective divisor
\[
(n+k)\h+\sum_{i=1}^{n+k} (m_i+1)c_1(\omega_i)=
\sum_{\substack{j>n\\ i<j}} (m_i+m_j)[\delta_{i,j}] 
\quad\in A^1\left({\PP\Omega\M'_{g,n}(\ol{\vc{m}})}\right)
\]
\end{lemma}

\begin{proof}
Consider the evaluation map
\[
\ev_i: \Ocal(-1) \lra \omega_i^{\otimes(m_i+1)}
\]
defined on ${\PP\Omega\M'_{g,n}(\ol{\vc{m}})}$, which vanishes exactly $m_j$ times
on $\delta_{i,j}$ for all $j\neq i$. Thus
\[
\h+(m_i+1)c_1(\omega_i)=[Z(\ev_i)]=\sum_{j\neq i}m_j[\delta_{i,j}]
\quad\in A^1\left({\PP\Omega\M'_{g,n}(\ol{\vc{m}})}\right)
\]
and we conclude summing up over $i=1,\dots,n+k$
and remembering that $\delta_{i,j}=\emptyset$ if $i<j\leq n$.
\end{proof}

By pushing the class of $\delta
\cap{\PP\Omega\M'_{g,0}(\ol{\vc{m}})}$ down to $\PP\Omega\M_g$,
we can recover Korotkin-Zograf's relation from Lemma 4 of \cite{korotkin-zograf:tau}. In fact, they are also able to extend such a relation to the Deligne-Mumford compactification
%(see Section \ref{sec:DM})
for $\PP\Omega\M'_{g,0}$.

\subsection{Local properties of strata}\label{sec:infinitesimal}

%In this section, we review some basic properties of the strata
%$\Omega\M'_{g,n}(\ol{\vc{m}})$. 

Smoothness of the locally closed strata
are already in Veech \cite{veech:flat}, Polishchuk \cite{polishchuk:effective} and M\"oller \cite{moeller:linear}, and the existence of local period coordinates is shown in
\cite{veech:flat} and \cite{moeller:linear}.
Both results could be extracted from Hubbard-Masur \cite{hubbard-masur:quadratic}, where they work with quadratic differentials.

\medskip

Let
$\vc{m}=(m_1,\dots,m_{n+k})$ be a multiplicity datum in genus $g\geq 2$.
%
%such that $|\vc{m}|=2g-2$
%and $m_i>0$ for $i>n$.

\begin{proposition}[Closed strata are smooth]\label{prop:smooth-closure}
The moduli space ${\Omega\M'_{g,n}(\ol{\vc{m}})}$ 
of Abelian differentials of type $(g,n,\ol{\vc{m}})$
is smooth of pure dimension $2g-1+n+k$.
\end{proposition}

\begin{proof}[Sketch of a proof]
Minor modifications of the arguments in
Polishchuk \cite{polishchuk:effective} and M\"oller \cite{moeller:linear}
show that $T_{(C,\vc{p},\ph)}\Omega\M'_{g,n}(\ol{\vc{m}})$ can
be identified to the hypercohomology group $\Hbb^1(C,\Dcal)$,
where $\Dcal=\Tcal\oplus\Kcal[-1]$ is the (shifted) cone of the map
\[
\Tcal=\left[
T_C \rar \bigoplus_{i=1}^{n+k} T_C|_{p_i} \right]
\lra
\Kcal=\left[
K_C \rar \bigoplus_{i=1}^{n+k} J^{m_i-1}_{p_i}K_C
\right]
\]
of complexes in degrees $[0,1]$ induced by $T_C \ni v \mapsto d(v\cdot\ph)\in K_C$,
and $J^{m_i-1}_{p_i}K_C$ is the $m_i$-dimensional vector space of $(m_i-1)$-jets of $K_C$ at the point $p_i$.

The conclusion follows from the long exact sequence in hypercohomology associated to the triangle $\Kcal[-1]\rar\Dcal\rar \Tcal$,
by observing that $\Hbb^0(C,\Dcal)=0$
and by showing that the cokernel of the map $\Hbb^1(C,\Tcal)\rar \Hbb^2(C,\Kcal[-1])=\Hbb^1(C,\Kcal)$
is given by the surjection $\Hbb^1(C,\Kcal)\rar H^2(C,\CC)\cong\CC$.
\end{proof}

In a similar way, one can also prove the following.

\begin{proposition}[Closed substrata are submanifolds]
For every surjection $\sigma\in\Sfrak_n(k,l)$,
the closed stratum $\PP\Omega\M'_{g,n}(\ol{\vc{m}})\cap\delta_\sigma$ is a regular submanifold of the moduli space ${\PP\Omega\M'_{g,n}(\ol{\vc{m}})}$ of Abelian differentials of type $(g,n,\ol{\vc{m}})$.
\end{proposition}

In order to recall the definition of period coordinates,
we need first to determine the correct period domain
for the moduli space $\Omega\M'_{g,n}(\ol{\vc{m}})$.
%Viewing the period map as a section of a
%$\CC$-local system, the problem consists in finding the right
%extension of $\Hloc^1(C,P;\CC)\rar\Hcal'_n(m)$ over $\ol{\Hcal'_n(m)}^{sm}$.
Let $\pi:\Omega\C^k_{g,n}\rar\Omega\M^k_{g,n}$ be the universal family and let
$\vc{P}=(P_1,\dots,P_{n+k})$ be the collection of tautological sections
$P_i:\Omega\M^k_{g,n}\rar\Omega\C^k_{g,n}$
corresponding to the marked points. %We also denote by

\begin{notation}
We will use the symbol $\ul{\CC}_\C$ for the trivial $\CC$-vector bundle of rank $1$ on $\Omega\C^k_{g,n}$ and
by $\ul{\CC}_{P_i}$ its restriction to the $i$-th tautological section.
We also let
\[
\ul{\CC}_{\C,\vc{P}}:=
\left[
\ul{\CC}_\C \arr{ev}{\lra} \bigoplus_{i=1}^{n+k} \ul{\CC}_{P_i}
\right]
\]
be the complex of constructible sheaves in degrees $[0,1]$.
\end{notation}

The flat vector bundle $R^1\pi_{k,*}(\ul{\CC}_{\C,\vc{P}})$
on $\M^k_{g,n}$ has constant rank $2g+n+k-1$.
Moreover, its fiber over $(C,\vc{p})$ 
naturally contains $H^1(C,\vc{p};\CC)$ and they
actually agree if and only if $(C,\vc{p})\notin \delta$.
%
%, where
%we have abused the same symbol $\vc{p}$ both to denote the marking and its image (as a subset of $C$).
%Moreover, if $(C,\vc{p})\in\delta$, then
%$H^1(C,\vc{p};\CC)$ is naturally contained in the stalk of $R^1\pi_{k,*}(\ul{\CC}_{\C,\vc{P}})$
%at $(C,\vc{p})$.

\begin{notation}
We will use the same symbol $\Hloc^1(\ul{\CC}_{\C,\vc{P}})$
to denote the flat vector bundle $R^1\pi_{k,*}(\ul{\CC}_{\C,\vc{P}})$, its restriction to any locus in $\M_{g,n}^k$ (such as
$\PP\Omega\M'_{g,n}(\ol{\vc{m}})$) and its pull-back to
$\Omega\M'_{g,n}(\ol{\vc{m}})$.
\end{notation}

%
%and so
%it restricts to
%$\Omega\M'_{g,n}(\ol{\v;c{m}})$ has rank $2g+n+k-1$.
%Its fiber over $(C,\vc{p},\ph)\in \Omega\M'_{g,n}(\vc{m})$ is
%naturally indentified to $H^1(C,\vc{p};\CC)$.
%
%The local system $\Hloc^1(C,P;\CC)$ on $\Hcal'_n(m)$
%extends as a $\CC$-local system $\Hloc^1(\ul{\CC}_{C,P})$ 
%of rank $2g+n+k-1$
%on $\ol{\Hcal'_n(m)}^{sm}$, where
%\[
%\ul{\CC}_{C,P}:=
%\left[
%\ul{\CC}_C \arr{ev}{\lra} \bigoplus_{i=1}^{n+k} \ul{\CC}_{p_i}
%\right]
%\]
%is a complex of constructible sheaves of $\CC$-vector spaces on $C$
%in degrees $[0,1]$.
%\end{lemma}
%\begin{proof}
%First of all, it is easy to check
%that $R^d\pi_*\ul{\CC}_{\C,\vc{P}}$ is indeed
%a local system of costant rank over $\M_{g,n}^k$ for $d=0,1,2$,
%which can be pulled back to
%$\Omega\M_;{g,n}^k$ and restricted to $\ol{\Hcal'_n(m)}^{sm}$.
%Moreover, if the points $\{p_i\}$ are distinct, then
%$\Hbb^d(C,\ul{\CC}_{C,P})=H^d(C,P;\CC)$ for $d=0,1,2$.
%So indeed $\Hloc^1(\ul{\CC}_{C,P})$ is an extension
%of $\Hloc^1(C,P;\CC)$. 
%\end{proof}

%The next step is to actually define the period map.
In order to locally define the period map,
we pick a simply connected
open subset $U$ of ${\Omega\M'_{g,n}(\ol{\vc{m}})}$
and
let $u_0$ be a point in $U$ %\cap \Omega\M'_{g,n}(\vc{m})$
representing a triple $(C,\vc{p}(0),\ph_0)$.
We can smoothly (but not holomorphically!) trivialize the {\it{unmarked}} universal family $\Omega\C'_{g,n}(\ol{\vc{m}})|_U\cong C\times U\rar U$, which we can thus think of
as a map $u\mapsto (J_u,\vc{p}(u),\ph_u)$, where
$\vc{p}(u)$ denotes the subset of marked points
and $\ph_u$ a closed $1$-form
on the fixed surface $C$ at $u\in U$. Also, 
we fix a trivialization of the
flat vector bundle $\Hloc^1(\ul{\CC}_{\C,\vc{P}})$
over $U$ that identifies it to
${\Hbb^1(\ul{\CC}_{C,\vc{p}(0)})}\times U$.

For every $u\in U$, define $\Pcal(J_u,\vc{p}(u),\ph_u)
\in\mathrm{Hom}(H_1(C,\vc{p}(u);\ZZ),\CC)\cong H^1(C,\vc{p}(u);\CC)$ 
as
\[
\hspace{-0.5cm}\Pcal(J_u,\vc{p}(u),\ph_u):
\xymatrix@R=0in{
H_1(C,\vc{p}(u);\ZZ) \ar[rr] && \CC\\
\gamma \ar@{|->}[rr] && \int_\gamma \ph
}
\]
Remembering that $H^1(C,\vc{p}(u);\CC)\subseteq \Hbb^1(\ul{\CC}_{C,\vc{p}(u)})
\cong \Hbb^1(\ul{\CC}_{C,\vc{p}(0)})$, this construction 
induces a {\it{local period map}} $\Pcal:U\rar \Hbb^1(\ul{\CC}_{C,\vc{p}(0)})$. 
All such local period maps assemble into a section
$\Pcal$ of the flat vector bundle $\Hloc^1(\ul{\CC}_{\C,\vc{P}})$ over
$\Omega\M'_{g,n}(\ol{\vc{m}})$: the {\it{global period map}}.

\begin{proposition}[Period map]\label{prop:period-closed-hol}
The section $\Pcal$ of $\Hloc^1(\ul{\CC}_{\C,\vc{P}})$ is holomorphic.
Moreover, given a trivialization of the flat vector bundle $\Hloc^1(\ul{\CC}_{\C,\vc{P}})\cong
\Hbb^1(\ul{\CC}_{C,\vc{p}(0)})\times U$ over an open subset $U\subset\Omega\M'_{g,n}(\ol{\vc{m}})$,
the induced period map $\Pcal:U\rar \Hbb^1(\ul{\CC}_{C,\vc{p}(0)})$ is
a local biholomorphism.
\end{proposition}
\begin{proof}[Sketch of a proof]
It is enough to show that the differential $d\Pcal$ of the local period map is
$\CC$-linear and invertible
at each point $u\in U$.

Since $\ul{\CC}_{C,\vc{p}(u)}$ is quasi-isomorphic to the cone
$\widehat{\Dcal}:=\Ocal^\bullet\oplus\Kcal[-1]$ of the morphism
\[
\Ocal^\bullet=\left[
\Ocal_C \rar \bigoplus_{i=1}^{n+k} J^{m_i}_{p_i}\Ocal_C \right]
\arr{d}{\lra}
\Kcal=\left[
K_C \rar \bigoplus_{i=1}^{n+k} J^{m_i-1}_{p_i}K_C
\right]
\]
of complexes in degrees $[0,1]$ on $(C,J_u)$,
the argument essentially reduced to noticing that the morphism
$\Dcal\rar \widehat{\Dcal}$
induced by the multiplication map $T_C\arr{\cdot\ph_u}{\lra}\Ocal_C$
is a quasi-isomorphism,
where $\Dcal$ is the complex of coherent sheaves
associated to $(C,J_u,\vc{p}(u),\ph_u)$
that appears in the proof of Proposition \ref{prop:smooth-closure}.
Details can be found for instance in \cite{moeller:linear}.

See also \cite{veech:flat} for an equivalent approach.
\end{proof}

Since period coordinates transverse to substrata will be relevant
in studying their thickenings,
we want to analyze them more closely.

Let $\sigma\in\Sfrak_n(k,l)$ be a surjection for some $l<k$ and consider the identification of 
$\Omega\M'_{g,n}(\ol{\sigma_*\vc{m}})$ with
$\Omega\M'_{g,n}(\ol{\vc{m}})\cap\delta_\sigma$.
Fix a point $(C,\vc{q},\ph)$ in the former moduli space,
which thus corresponds to $(C,\vc{p},\ph)$ in the latter,
where $\vc{p}=\vc{q}\circ\sigma:\num{n+k}\rar C$.
%
%Assume that 
%there exist an integer $l\in [1,k]$, a surjection $\sigma\in\Sfrak_n(k,l)$
%and a $(C,Q,\ph)\in\ol{\Hcal'_n(\sigma_*m)}^{sm}$ with $\sigma_*m:\ul{n+l}\rar\NN$
%such that 
%In other words, assume that $(C,Q,\ph)$ corresponds to $(C,P,\ph)$
%under the identification of $\ol{\Hcal'_n(\sigma_*m)}^{sm}$ with
%$\ol{\Hcal'_n(m)}^{sm}\cap\delta_\sigma$.

The following is an easy consequence of Proposition \ref{prop:period-closed-hol}.

\begin{proposition}[Transverse period coordinates]
The period maps $\Pcal_{\ol{\vc{m}}}$ and $\Pcal_{\ol{\sigma_*\vc{m}}}$ 
associated to the strata $\Omega\M'_{g,n}(\ol{\vc{m}})$ and $\Omega\M'_{g,n}(\ol{\sigma_*\vc{m}})$
induce
an isomorphism of the normal bundle of ${\Omega\M'_{g,n}(\ol{\sigma_*\vc{m}})}$
inside ${\Omega\M'_{g,n}(\ol{\vc{m}})}$ with the sheaf
$\Hloc^1(\ul{\CC}_{\vc{Q},\vc{P}})$, where
$\ul{\CC}_{\vc{Q},\vc{P}}=\left[ \bigoplus_j \ul{\CC}_{Q_j}\arr{ev}{\lra}
\bigoplus_i \ul{\CC}_{P_i}\right]$, the $Q_j$ (resp. the $P_i$)
are the tautological sections of $\Omega\M'_{g,n}(\ol{\sigma_*\vc{m}})$ (resp. of $\Omega\M'_{g,n}(\ol{\vc{m}})$) and $ev(1_{Q_j})=\sum_{\sigma(i)=j}1_{P_i}$.
\end{proposition}

A treatment for coordinates normal to strata of Abelian differentials with no marked points as symmetric functions in the transverse periods can be found in
Kontsevich-Zorich \cite{kontsevich-zorich}.

\section{Length and area functionals}\label{sec:geometric}
%%%%%%%%%%%%%%%%%%%%%%%%%%%%%%%%%%%%%%%

Let $g\geq 2$ %and $n,k\geq 0$ 
and let $\vc{m}:\num{n+k}\rar\NN$ 
be a multiplicity datum in genus $g$.
%with $|\vc{m}|=2g-2$ and such that $m_i>0$ for all $i>n$.

The purpose of this section is to study
certain real-valued homogeneous functions on
the moduli spaces $\Omega\M'_{g,n}(\vc{m})$ 
defined in terms of the flat metric
induced by the Abelian differential, such as the total area
of the flat surface or the flat length of the shortest geodesic
in a fixed homotopy class.
The complex Hessian of such functions will then be easily analyzed using period coordinate.

Then we will combine such functions to produce one that descends to $\PP\Omega\M'_{g,n}(\vc{m})$
and which is $\aut_n(\vc{m})$-invariant.

In taking
products and sums of such functions,
we need to keep control of the directions along which
their complex Hessian is positive-definite.
For this reason, we will often refer to the technical condition
described in Section \ref{sec:star}.

%As we will need to exhibit a proper function on $\PP\Omega\M'_{g,n}(\vc{m})$, in Subsection \ref{sec:divergence} we will discuss
%divergence properties of homogeneous functions on $\Omega\M'_{g,n}(\vc{m})$.

\subsection{Functions satisfying condition ${(\star)}$}\label{sec:star}

Consider a function $\eta:\cdom \lra \RR$
defined on an open cone $\cdom\subseteq \Omega\M'_{g,n}(\vc{m})$,
that is an open subset $\cdom$ that satisfies $\CC^*\cdot\cdom=\cdom$,
and let $(C,\vc{p},\ph)\in\cdom$.
We will usually deal with functions $\eta$ that are {\it{homogeneous of degree $d$}},
namely that
satisfy $\eta(C,\vc{p},\lambda\ph)=|\lambda|^d\,\eta(C,\vc{p},\ph)$ for all $\lambda\in\CC^*$
and all $(C,\vc{p},\ph)\in\cdom$.

Because of Proposition \ref{prop:period-closed-hol},
there exists a contractible open neighbourhood $U\subset \cdom$ of
$(C,\vc{p},\ph)$ such that the 
local period map $\Pcal:U\rar H^1(C,\vc{p};\CC)$
is biholomorphic onto an open subset $\Pcal(U)\subset H^1(C,\vc{p};\CC)$.

\begin{notation}
The Hodge decomposition of $H^1(C;\CC)$ with respect to the
complex structure of $(C,\vc{p},\ph)$ will be denoted by
$H^{1,0}_\ph(C)\oplus H^{0,1}_\ph(C)$.
\end{notation}

Call $\Pi_\ph\subset H^1(C,\vc{p};\CC)$
the subset of differential forms whose
absolute cohomology class belongs to $H^{1,0}_\ph(C)$, so that
\[
\xymatrix@R=0.3cm{
0 \ar[r] & \wti{H}^0(\vc{p};\CC) \ar[r] &
H^1(C,\vc{p};\CC) \ar[r] & H^1(C;\CC) \ar[r] & 0\\
0 \ar[r] & \wti{H}^0(\vc{p};\CC) \ar[r] \ar@{=}[u]  & 
\Pi_\ph \ar[r] \ar@{^(->}[u] & H^{1,0}_\ph(C) \ar@{^(->}[u] \ar[r] & 0
}
\]
commutes, where $\wti{H}^0(\vc{p};\CC)=H^0(\vc{p};\CC)/\CC$.
Let $\Pi'_\ph$ be a complement of the line $\CC(\ph)$ inside $\Pi_\ph$.

\begin{definition}\label{def:star}
A function $\eta$ {\it{weakly}} satisfies {\it{condition ${(\star)}$
at ${(C,\vc{p},\ph)}$}} if, up to taking a smaller $U$,
the restriction of $\eta$ to $U\cap \Pcal^{-1}((\ph)+\Pi_\ph)$ is plurisubharmonic.
It satisfies {\it{condition ${(\star)}$}}
if it weakly satisfies $(\star)$ and moreover
the restriction to $U\cap \Pcal^{-1}((\ph)+\Pi'_\ph)$ is strictly plurisubharmonic.
We will say that $\eta$ (weakly) satisfies condition $(\star)$
in $\cdom$ if it does at every point of $\cdom$.
\end{definition}

\begin{remark}
If $\eta$ is $C^2$, then it weakly satisfies
$(\star)$ at $(C,\vc{p},\ph)$ if the restriction of $(i\pa\ol{\pa}\eta)_{(C,\vc{p},\ph)}$
to the subspace $d\Pcal_{(C,\vc{p},\ph)}^{-1}(\Pi_\ph)$ is semipositive-definite; and it satisfies $(\star)$ at $(C,\vc{p},\ph)$ if
moreover the restriction of $(i\pa\ol{\pa}\eta)_{(C,\vc{p},\ph)}$
to the subspace $d\Pcal_{(C,\vc{p},\ph)}^{-1}(\Pi'_\ph)$ is positive-definite.
\end{remark}

We collect some obvious properties in the following statement.

\begin{lemma}[Condition ($\star$)]\label{lemma:star-properties}
The following properties hold.
\begin{itemize}
\item[(a)]
If $\eta$ satisfies weak $(\star)$, then $\lambda\eta$ satisfies
weak $(\star)$ for all $\lambda\geq 0$.
\item[(b)]
If $\zeta$ satisfies $(\star)$, then $\nu\zeta$ satisfies $(\star)$
for all $\nu>0$.
%\item[(c)]
%If $\eta$ satisfies $(\star)$ at $(C,P,\ph)$, then $\eta$ satisfies 
%weak $(\star)$ at $(C,P,\ph)$.
\item[(c)]
If $\eta_1,\eta_2$ satisfy weak $(\star)$ at $(C,\vc{p},\ph)$, then
$\eta_1+\eta_2$ satisfies weak $(\star)$ at $(C,\vc{p},\ph)$.
\item[(d)]
If $\zeta$ satisfies $(\star)$ and $\eta$ satisfies weak $(\star)$ in $\cdom$, then $\zeta+\eta$ satisfies $(\star)$ in $\cdom$.
\item[(e)]
If $\eta_1$ and $\eta_2$ are positive functions such that
$\log(\eta_1)$ and $\log(\eta_2)$ satisfy weak $(\star)$,
then $\log(\eta_1+\eta_2)$ satisfies weak $(\star)$.
Suppose moreover that $\eta_1,\eta_2\in C^2$ and,
for every $(C,\vc{p},\ph)\in\Xi$ and 
along every direction of $d\Pcal^{-1}_{(C,\vc{p},\ph)}(\Pi'_\ph)$,
either $i\pa\ol{\pa}\log(\eta_1)>0$ or
$i\pa\ol{\pa}\log(\eta_2)>0$.
%
%for every $0\neq \dot{\ph}\in 
%
%T_{(C,\vc{p},\ph)} \Xi$
%such that $d\Pcal_{(C,\vc{p},\ph)}(\dot{\ph})\in\Pi'_\ph$, either
%$i\pa\ol{\pa}\log(\eta_1)_u(\dot{\ph},\dot{\ph})>0$ or
%$i\pa\ol{\pa}\log(\eta_2)_u(\dot{\ph},\dot{\ph})>0$.
Then $\log(\eta_1+\eta_2)$ satisfies $(\star)$.
\end{itemize}
\end{lemma}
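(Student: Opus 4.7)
Here is my plan.

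The assertions (a) through (e) should all reduce to completely standard facts about (strictly) plurisubharmonic functions once one unwinds the definitions. Indeed, by Definition \ref{def:star} the conditions weak $(\star)$ and $(\star)$ amount to (strict) plurisubharmonicity of the restrictions to the complex submanifolds $U\cap \Pcal^{-1}((\ph)+\Pi_\ph)$ and $U\cap \Pcal^{-1}((\ph)+\Pi'_\ph)$ respectively. Since psh functions are closed under sum, under multiplication by a non-negative constant, and since adding a psh function to a strictly psh function yields a strictly psh function (and multiplication by a positive constant preserves strict plurisubharmonicity), the restrictions inherit the desired properties. So (a), (b), (c), (d), (e) follow formally. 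I would dispatch these in a single opening paragraph.

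The only substantive item is (f), and the main obstacle is deriving and then interpreting the correct identity for the complex Hessian of $\log(\eta_1+\eta_2)$. The plan is: set $u_i=\log\eta_i$ and $t=\eta_1/(\eta_1+\eta_2)\in(0,1)$, so that $\partial\log(\eta_1+\eta_2)=t\,\partial u_1+(1-t)\partial u_2$. Applying $\bar\partial$ and using the elementary identity $\bar\partial t=t(1-t)(\bar\partial u_1-\bar\partial u_2)$, a short computation produces
\[
i\pa\ol{\pa}\log(\eta_1+\eta_2)
= t\cdot i\pa\ol{\pa} u_1 + (1-t)\cdot i\pa\ol{\pa} u_2 + t(1-t)\cdot i\alpha\wedge\bar\alpha,
\]
where $\alpha:=\partial u_1-\partial u_2$. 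Since $i\alpha\wedge\bar\alpha\geq 0$ in the sense of $(1,1)$-forms, this gives
\[
i\pa\ol{\pa}\log(\eta_1+\eta_2)\;\geq\; t\cdot i\pa\ol{\pa}\log\eta_1+(1-t)\cdot i\pa\ol{\pa}\log\eta_2.
\]

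From here the conclusions of (f) follow easily. For any tangent vector $\dot u$ with $d\Pcal_u(\dot u)\in\Pi_\ph$, the hypothesis on $\log\eta_1$ and $\log\eta_2$ gives that both Hessians on the right are non-negative on $(\dot u,\dot u)$; since $t,1-t\geq 0$ and the third term is non-negative, the left-hand side is also non-negative, which yields weak $(\star)$ for $\log(\eta_1+\eta_2)$. For the strict statement, fix $\dot u$ with $d\Pcal_u(\dot u)\in\Pi'_\ph$: by assumption at least one of $i\pa\ol{\pa}\log\eta_i(\dot u,\dot u)$ is strictly positive while the other is at least non-negative, and with $t\in(0,1)$ the convex combination is strictly positive, so $i\pa\ol{\pa}\log(\eta_1+\eta_2)(\dot u,\dot u)>0$, as required. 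The only point demanding real care in the write-up is the Hessian identity itself; everything else is bookkeeping, so I would present the identity as a short display and then draw both conclusions in a single closing paragraph.
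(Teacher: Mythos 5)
Your proposal is correct and follows essentially the same route as the paper. Items (a)--(d) are declared obvious in the paper just as you do; for (e) the paper spells out the ``strictly psh $+$ psh $=$ strictly psh'' fact by producing a smooth strictly psh minorant $h$ of $\zeta$ (which is the standard way to handle non-$C^2$ functions), so your one-line appeal to that fact is the same argument with the justification suppressed; and for (f) your Hessian identity with $t=\eta_1/(\eta_1+\eta_2)$ is algebraically identical to the paper's formula
\[
i\pa\ol{\pa}\log(\eta_1+\eta_2)=
i\,\frac{\eta_1 \eta_2\,\pa\log(\eta_1/\eta_2)\wedge\ol{\pa}\log(\eta_1/\eta_2)
+(\eta_1+\eta_2)\bigl(\eta_1\,\pa\ol{\pa}\log\eta_1+\eta_2\,\pa\ol{\pa}\log\eta_2\bigr)}{(\eta_1+\eta_2)^2}\, ,
\]
and both conclusions are read off from it exactly as you describe. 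The only small point worth noting is that the first assertion of (f) is stated without a $C^2$ hypothesis, so the Hessian computation is, strictly speaking, only valid after a smoothing argument or should be replaced by the observation that $\log(e^{v_1}+e^{v_2})$ is convex and nondecreasing in $(v_1,v_2)$; the paper has the same slight looseness, so this is not a defect relative to the original.
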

\begin{proof}
Properties (a), (b) and (c) are obvious.

As for (d), let $U\subset\cdom$ be an open neighbourhood
of $(C,P,\ph)$ as above
and let $L:=U\cap \Pcal^{-1}((\ph)+\Pi'_\ph)$.
Notice that, along the segment $U\cap \Pcal^{-1}(\CC(\ph))$
both $\zeta$ and $\eta$ are plurisubharmonic and so $\zeta+\eta$ is.
Because the restriction of $\zeta$ to $L$ is strictly plurisubharmonic,
there exists a smooth $f: L\rar\RR$
such that $\zeta|_{L}-f$ is plurisubharmonic and
$(i\pa\ol{\pa}f)>0$ on $L$.
Hence, $f+\eta|_{L}$ is strictly plurisubharmonic and so
$(\zeta+\eta)|_{L}$ is too.

As for (e),
restrict to a germ of complex curve through $(C,\vc{p},\ph)\in \Xi$. Then by direct computation
\[
\hspace{-0.5cm}
i\pa\ol{\pa}\log(\eta_1+\eta_2)=
i\frac{\eta_1 \eta_2
\left(
\pa\log(\eta_1/\eta_2)\wedge\ol{\pa}\log(\eta_1/\eta_2)
\right)
+(\eta_1+\eta_2)\left(
\eta_1\pa\ol{\pa}\log(\eta_1)+\eta_2\pa\ol{\pa}\log(\eta_2)
\right)
}
{(\eta_1+\eta_2)^2}
\]
and the conclusion follows.
\end{proof}

\begin{notation}
A function $\PP\Omega\M'_{g,n}(\vc{m})\rar\RR$ corresponds to a $\CC^*$-invariant function $\Omega\M'_{g,n}(\vc{m})\rar\RR$. We will say that the former (weakly) satisfies condition ($\star$) if the latter does. Similarly,
we say that $\Omega\M_{g,n}(\vc{m})\rar\RR$ (weakly) satisfies
condition $(\star)$ if the induced $\aut_n(\vc{m})$-invariant 
$\Omega\M'_{g,n}(\vc{m})\rar\RR$ does.
\end{notation}

\begin{remark}
The definitions and remarks of the current section can be also given for real-valued functions
defined on an open subset of ${\PP\Omega\M'_{g,n}(\ol{\vc{m}})}$.
Indeed, in this case the local period map lands in $\Hbb^1(\ul{\CC}_{C,\vc{p}})$;
so we still have an exact sequence
\[
\xymatrix{
0 \ar[r] & \Hbb^0(\ul{\CC}_{\vc{p}})/\CC \ar[r] & \Hbb^1(\ul{\CC}_{C,\vc{p}}) \ar[r] &
H^1(C;\CC) \ar[r] & 0
}
\]
and we will denote by $\Pi_\ph$ the subspace of $\Hbb^1(\ul{\CC}_{C,\vc{p}})$
that maps to $H^{1,0}_{\ph}(C)$.
\end{remark}

\subsection{The area functional}

Let $\pi:\C_{g}\rar\M_{g}$ be the universal family and
let $\Hloc^1(\ul{\CC}_\C):=R^1\pi_*\ul{\CC}$ be the flat vector bundle
over $\M_{g}$, whose fiber over $C\in\M_{g}$ is $H^1(C;\CC)$.
We define a Hermitian pairing $h$ on $\Hloc^1(\ul{\CC}_\C)$ as
\[
h_{C}(\ph_1,\ph_2):=\frac{i}{2}\int_C \ph_1\wedge\ol{\ph}_2\, .
\]

\begin{lemma}[Standard Hermitian pairing]\label{lemma:b}
The Hermitian form $h$ has the following properties.
\begin{itemize}
\item[(a)]
For every $C\in\M_{g}$, the pairing
$h_{C}$ has signature $(n_+,n_-,n_0)=(g,g,0)$.
\item[(b)]
The restriction of $h_{C}$ to $H^{1,0}(C)$ is positive-definite
and its restriction to $H^{0,1}(C)$ is negative-definite, where
$H^1(C;\CC)=H^{1,0}(C)\oplus H^{0,1}(C)$ is Hodge decomposition
determined by $C\in\M_{g}$.
\item[(c)]
$h$ is smooth and it restricts to a smooth
Hermitian metric on the holomorphic vector bundle
$\pi_*(\omega_\pi)\rar\M_{g}$.
\end{itemize}
\end{lemma}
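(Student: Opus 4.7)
My plan is to prove parts (b), (a), and (c) in that order, since (a) follows quickly once (b) is established, and (c) is essentially a remark once one notices that the pairing $h$ is in fact locally constant on the underlying flat bundle.

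For (b), the key is a direct local computation. Pick any holomorphic $1$-form $\ph$ on $C$. Locally, $\ph = f(z)dz$ with $f$ holomorphic, so
\[
\frac{i}{2}\,\ph\wedge\ol{\ph} = \frac{i}{2}\,|f|^2\,dz\wedge d\bar z = |f|^2\,dx\wedge dy
\]
(using $dz\wedge d\bar z = -2i\,dx\wedge dy$). Integrating over $C$ gives a nonnegative real number that vanishes iff $f\equiv 0$, which proves positive-definiteness of $h_C$ on $H^{1,0}(C)$. For $\bar\psi\in H^{0,1}(C)$ with $\psi\in H^{1,0}(C)$, antisymmetry of the wedge product gives $\bar\psi\wedge\ol{\bar\psi}=\bar\psi\wedge\psi = -\psi\wedge\bar\psi$, so $h_C(\bar\psi,\bar\psi) = -h_C(\psi,\psi)$, which is strictly negative when $\psi\neq 0$.

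For (a), I would invoke the Hodge decomposition $H^1(C;\CC) = H^{1,0}(C)\oplus H^{0,1}(C)$, with both summands of dimension $g$. Part (b) already shows that $h_C$ is positive definite on the first summand and negative definite on the second. It remains to show the two summands are $h_C$-orthogonal: given $\ph\in H^{1,0}(C)$ and $\bar\psi\in H^{0,1}(C)$, we have $h_C(\ph,\bar\psi) = \frac{i}{2}\int_C \ph\wedge\psi$, and $\ph\wedge\psi$ is a holomorphic $2$-form on the Riemann surface $C$, hence vanishes identically. Thus the signature is $(g,g,0)$, as claimed.

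For (c), the key observation is that $h$ is actually a \emph{flat} Hermitian form on the local system $\Hloc^1$: both the wedge product (cup product in cohomology) and the fundamental class of $C$ are topological invariants, so $h_C(\a,\b) = \frac{i}{2}\int_C \a\wedge\ol{\b}$ depends only on the cohomology classes $\a, \b$ and the (canonical) orientation of $C$, not on the complex structure. In particular, $h$ defines a smooth (even flat) pseudo-Hermitian form on $\Hloc^1\rar\M_g$. The Hodge bundle $\Hcal_g\rar\M_g$ is a smooth (though not holomorphic) subbundle of $\Hloc^1$ whose fiber at $C$ is $H^{1,0}(C)$, so the restriction of $h$ to $\Hcal_g$ is smooth, and by (b) it is positive-definite, hence a genuine Hermitian metric.

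The most delicate point is (c), and specifically the implicit claim that $\Hcal_g$ varies smoothly inside $\Hloc^1$; this is the standard fact that the Hodge filtration varies holomorphically in families of compact Kähler manifolds, so no new work is needed. Everything else amounts to sign conventions and classical Hodge theory.
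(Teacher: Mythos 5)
Your proof is correct and follows essentially the same route as the paper: the same local computation $\frac{i}{2}\ph\wedge\ol\ph=|f|^2\,dx\wedge dy$ for (b), deducing (a) from (b), and deducing (c) from the local constancy of $h$ together with the fact that $\Hcal_g$ is a subbundle of $\Hloc^1\otimes\Ocal_{\M_g}$. Two very minor remarks: the orthogonality of $H^{1,0}$ and $H^{0,1}$ which you verify in (a) is not actually needed (if a Hermitian form is positive-definite on $V_+$ and negative-definite on $V_-$ with $V=V_+\oplus V_-$, its signature is automatically $(\dim V_+,\dim V_-,0)$ by a dimension count), and the Hodge bundle $\Hcal_g$ is in fact a \emph{holomorphic} subbundle of $\Hloc^1\otimes\Ocal_{\M_g}$, not merely a smooth one — though only smoothness is used in your argument, so this mislabeling is harmless.
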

\begin{proof}
About (b),
a holomorphic $(1,0)$-form $\ph$ on $C$ can be locally written as
$f(z)dz$ with respect to a holomorphic coordinate $z=x+iy$.
So $\frac{i}{2}\ph\wedge\ol{\ph}=|f(z)|^2 dx\wedge dy$,
which implies that $h|_{H^{1,0}}$ is positive-definite.
An analogous argument shows that $h|_{H^{0,1}}$ is negative-definite.
Clearly, (a) follows from (b).

Finally, $h$ is smooth because it is locally constant
and (c) then follows from (b), because 
$\pi_*(\omega_\pi)$ is the subbundle of $\Hloc^1(\ul{\CC}_\C)$
consisting of $(1,0)$-holomorphic forms.
\end{proof}

\begin{definition}
The {\it{area functional}} $A:\Omega\M_g\rar \RR$
is
\[
A(C,\ph):=h_C(\ph,\ph)=\frac{i}{2}\int_C \ph\wedge\ol{\ph}\, .
\]
\end{definition}

Notice that indeed $A$ computes the area of the surface $C$
endowed with the flat metric $|\ph|^2$ with conical singularities.

\begin{notation}
By a little abuse, we will use the same symbol $A$
for the pull-back of the area functional to $\Omega\M_{g,n}^k$ and
for its restrictions to $\Omega\M'_{g,n}(\ol{\vc{m}})$ and $\Omega\M_{g,n}(\ol{\vc{m}})$.
%Similarly, the restriction of the pull-back of $h$ provides a smooth Hermitian
%metric on $\Ocal_{\PP\Hcal'_n(m)}(-1)$ and $\Ocal_{\PP\Hcal_n(m)}(-1)$.
\end{notation}

We collect in the following statement certain local features of the area functional.

\begin{lemma}[Area functional]\label{lemma:A}
The function $A:\Omega\M'_{g,n}(\vc{m})\rar\RR_{+}$
has the following properties:
\begin{itemize}
\item[(a)]
$A$ and $\log(A)$ are smooth;
\item[(b)]
$A$ is homogeneous of degree $2$;
\item[(c)]
the signature of $i\pa\ol{\pa}A$ is $(g,g,k-1)$ and
the signature of $i\pa\ol{\pa}\log(A)$ is $(g-1,g,k)$;
\item[(d)]
both $A$ and $\log(A)$ satisfy condition weak $(\star)$.
%representing surfaces with only one irreducible component.
\end{itemize}
%The same statements hold when regarding $A$ as a function
%on $\Hcal'_n(m)^\ze$.
\end{lemma}
\begin{proof}
Part (a) follows from Lemma \ref{lemma:b}(c)
and part (b) is obvious.
%, the functional $A$ is smooth
%and so is $\log(A)$ too. Moreover, $A$ is manifestly homogeneous of degree $2$. %Hence, the same properties hold on $\Hcal'_n(m)^\ze$.

Since claims (c) and (d) are local,
%, so we can directly
%work on $\Hcal'_n(m)^\ze$, because
%$\Hcal'_n(m)^\ze$ and $\Hcal_n(m)^\ze$
%are locally biholomorphic.
we can fix a $(C,\vc{p},\ph)$ in $\Omega\M'_{g,n}(\vc{m})$
and 
work in period coordinated near such point.
More precisely,
we pick a small open neighbourhood
$U\subset\Omega\M'_{g,n}(\vc{m})$ of $(C,\vc{p},\ph)$ such that the period map
realizes a biholomorphism between $U$ and an open subset
$\Pcal(U)\subset H^1(C,\vc{p};\CC)$,
whose existence is granted by Proposition \ref{prop:period-closed-hol}. 
%Thus, we can work in period coordinates.

Consider then the exact sequence
\[
0 \rar \wti{H}^0(\vc{p};\CC) \rar H^1(C,\vc{p};\CC) \rar H^1(C;\CC)\rar 0
\]
and notice that $A$ factorizes through $H^1(C;\CC)$,
so that $\wti{H}^0(\vc{p};\CC)$, which has dimension $k-1$,
certainly belongs to the radical of $i\pa\ol{\pa}A$.
More explicitly, if $\dot{\ph}\in H^1(C,\vc{p};\CC)\cong T_{(C,\vc{p},\ph)}\Omega\M'_{g,n}(\vc{m})$, then
\[
A(\ph+\e \dot{\ph})=A(\ph)+\e b(\dot{\ph},\ph)+\ol{\e}b(\ph,\dot{\ph})+
|\e|^2 b(\dot{\ph},\dot{\ph})
\]
Claims (c) and (d) for $A$ then follow from Lemma \ref{lemma:b}.

As for $\log(A)$, we compute
$
i\pa\ol{\pa}\log(A)=i A^{-2}(\pa\ol{\pa}A-\pa A\wedge\ol{\pa}A)
$
and so
\[
i\pa\ol{\pa}\log(A)_\ph(\dot{\ph},\dot{\ph})=
\frac{A(\ph)A(\dot{\ph})-|h(\ph,\dot{\ph})|^2}{A(\ph)^2}
\]
from which we conclude that,
as a Hermitian form on $H^1(C;\CC)$,
the Hessian $i\pa\ol{\pa}\log(A)$
is negative-definite on $H^{0,1}(C)$, vanishes along the line $\CC\cdot(\ph)$
and it is positive-definite on $H^{1,0}(C)\cap(\ph)^\perp$, which shows that
$\log(A)$ satisfies condition weak $(\star)$.
As $i\pa\ol{\pa}\log(A)$ has signature $(g-1,g,1)$
as a Hermitian form on $H^1(C;\CC)$, it follows that 
it has signature $(g-1,g,k)$
as a Hermitian form on $H^1(C,\vc{p};\CC)$.
Hence, (c) and (d) are proven.
\end{proof}

\subsection{Length functions}\label{sec:length}

Since degenerations on flat surfaces of bounded area occur as conical singularities merge together, it makes sense to study distance functions between couples of marked points along prescribed paths.

\begin{definition}
An {\it{arc}} on the surface $C$ with marked points $\vc{p}$ is a nontrivial homotopy class (with fixed endpoints) of a simple oriented path $\gamma$ on $C$
that intersects $\vc{p}$ at its endpoints. A set of arcs $B=\{\gamma_j\}$ is a {\it{basis}} if
the set of homology classes $\{(\gamma_j)\}$ forms a basis of the real vector space $H_1(C,\vc{p};\RR)$.
We denote by $\Bcal$ the set of all bases of arcs.
\end{definition}

Let $\ph$ be a non-zero Abelian differential on $(C,\vc{p})$
and assume that the set of its zeros $Z(\ph)$ is contained inside $\vc{p}$.
The associated metric $|\ph|^2$ on $C$ is flat
outside $Z(\ph)$ and 
it has a conical singularity of angle $2\pi(d+1)$ at a zero
of order $d$.

\begin{definition}
A {\it{segment}} for $(C,\vc{p},\ph)\in\Omega\M'_{g,n}(\vc{m})$
is a smooth $\ph$-geodesic path between two
(not necessarily distinct) points in $\vc{p}$.
We will say that a segment is {\it{proper}} if it intersects
$\vc{p}$ at its endpoints only.
\end{definition}

\begin{remark}
Such segments joining two zeros of an Abelian or, more generally,
a holomorphic quadratic differential are often called
{\it{saddle connections}}, the terminology being borrowed from the world of minimal surfaces.
\end{remark}

As the metric $|\ph|^2$ is non-positively curved,
there always exists a unique geodesic representing a given arc $\gamma$. Such a geodesic is a concatenation of proper
segments $\gamma=\gamma^{(1)}\cup\dots\cup\gamma^{(s)}$.
We denote by $\ell_\gamma(\ph)$ the {\it{length}} of such a geodesic with respect to the metric $|\ph|^2$. Clearly,
$\ell_\gamma(\ph)=\ell_{\gamma^{(1)}}(\ph)+\dots+\ell_{\gamma^{(s)}}(\ph)$.

\begin{definition}
The {\it{systole}} $\ell_{sys}(\ph)$ of $(C,\vc{p},\ph)\in\Omega\M'_{g,n}(\vc{m})$ is the 
infimum of the lengths $\ell_\gamma(\ph)$, as $\gamma$ ranges over all non-trivial
arcs in $C$.
%length  of the shortest (nontrivial) arc on $C$ with respect
%to the metric $|\ph|^2$.
\end{definition}

We recall some preliminary results
on the geometry of such flat surfaces with conical singularities.

The content of Lemma \ref{lemma:ell-properties0} is rather well-known (see for instance \cite{kms:ergodicity} and \cite{masur-smillie:hausdorff}, or \cite{masur:ergodic}) and implicitly or explicitly contained in all papers that deal with counting problems of saddle connections on translation surfaces. Similar considerations hold for Lemma \ref{lemma:ell-properties}, whose conclusions can be easily deduced from the existing literature on the subject. We claim no originality for these two statements, but proofs are included for completeness.

\begin{lemma}[Systole and short arcs]\label{lemma:ell-properties0}
For every $(C,\vc{p},\ph)$ in $\Omega\M'_{g,n}(\vc{m})$, the following hold:
\begin{itemize}
\item[(a)]
for every $\ell>0$ there are finitely many $\ph$-segments
on $(C,\vc{p})$ of length less than $\ell$;
\item[(b)]
the systole $\ell_{sys}(\ph)$
is always realized at a proper segment.
\end{itemize}
\end{lemma}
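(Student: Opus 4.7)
The strategy is to exploit the local structure of the flat metric $|\ph|^2$: it is locally Euclidean away from the zeros of $\ph$, and at a zero of order $m$ it has a conical singularity of total angle $2\pi(m+1)$. At each $p_i \in P$, consider the tangent cone $T_{p_i}C$, a flat Euclidean cone of total angle $2\pi(m_i+1)$, and the exponential map $\exp_{p_i}$ defined on the open subset $\Omega_i \subset T_{p_i}C$ of vectors $v$ such that the ray $[0,v]$ meets no zero of $\ph$ in its interior; on $\Omega_i$ the exponential is a local isometry onto $(C,|\ph|^2)$.

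For (a), a $(C,P,\ph)$-segment of length $\ell$ starting at $p_i$ corresponds bijectively to a vector $v \in \Omega_i$ with $|v|=\ell$ and $\exp_{p_i}(v) \in P$. Since $\exp_{p_i}$ is a local homeomorphism on $\Omega_i$ and $P$ is a finite subset of $C$, the set $\exp_{p_i}^{-1}(P)$ is a closed discrete subset of $\Omega_i$. A standard compactness argument (using that the closed disk of radius $L$ in $T_{p_i}C$ has finite area and $\exp_{p_i}$ is a local isometry where defined) then shows that $\exp_{p_i}^{-1}(P) \cap \{|v| \leq L\}$ is finite; summing over the finitely many $p_i \in P$ yields (a).

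For (b), any arc $\gamma$ admits a unique $|\ph|^2$-geodesic representative, which decomposes canonically as a concatenation $\gamma^1 \cup \cdots \cup \gamma^s$ of proper segments, each of which is itself an arc. Since
\[
\ell_\gamma(\ph) \;=\; \sum_{j=1}^{s} \ell_{\gamma^j}(\ph) \;\geq\; \ell_{\gamma^j}(\ph),
\]
the infimum of arc lengths equals the infimum over proper segments alone. Fixing any proper segment of length $L_0$, the systole is at most $L_0$; by (a) there are only finitely many proper segments of length $\leq L_0$, so this infimum is attained, establishing (b).

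The main subtlety, rather than any deep obstacle, is the careful handling of the domain $\Omega_i$: it is not closed, because rays in $T_{p_i}C$ pointing at zeros of $\ph$ must be excised, and such bad directions can be countably infinite and topologically dense in the circle of directions at $p_i$. However, for the argument only discreteness of $\exp_{p_i}^{-1}(P)$ inside $\Omega_i$ is required, and this follows directly from the local isometry property, once one notes that a would-be accumulation point would produce a sequence of nearby segments of nearly equal length whose endpoints accumulate in the finite set $P$ --- a contradiction with the injectivity of $\exp_{p_i}$ on a small neighborhood.
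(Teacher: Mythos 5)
Your proof of part (b) is essentially the paper's: decompose the geodesic representative of an arc into proper segments and use additivity of length plus part (a); that part is fine. Part (a), however, has a genuine gap in the finiteness step. You correctly observe that $\exp_{p_i}^{-1}(P)$ is closed and discrete \emph{inside $\Omega_i$}, but $\Omega_i \cap \{|v|\le L\}$ is not compact (as you yourself note, $\Omega_i$ is open, and its frontier can accumulate at any direction). A closed discrete subset of a non-compact set can perfectly well be infinite, so discreteness in $\Omega_i$ does not by itself give finiteness in $\{|v|\le L\}$. Your attempted repair --- that an accumulation point would contradict ``injectivity of $\exp_{p_i}$ on a small neighbourhood'' --- does not close the gap. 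The would-be accumulation point $v_\infty$ lies on the frontier of $\Omega_i$ (the ray $[0,v_\infty]$ hits a zero $q$ of $\ph$ at some $t_0<|v_\infty|$), and near such a point the exponential is \emph{not} injective: rays passing on either side of the cone point $q$ diverge to different sheets of the developed picture, and distinct vectors $v_j$ approaching $v_\infty$ from the same side can a priori wrap around $q$ and return to the same endpoint $p$. Ruling this out is exactly a localized form of the finiteness statement you are trying to prove, so the argument as written is circular at this point. The paper avoids the issue entirely by lifting to the universal cover $\widetilde{C}$, which is a CAT(0) flat surface: there, any segment of length $\le L$ starting in a fixed fundamental domain $D$ of diameter $R$ lies in the ball $B(\tilde x,L+R)$, cocompactness gives that $B$ meets only finitely many translates of $D$, hence $B\cap\widetilde{P}$ is finite, and uniqueness of CAT(0) geodesics turns ``finitely many pairs of endpoints'' into ``finitely many segments.'' If you want to rescue the tangent-cone approach, you essentially have to import this universal-cover argument (or an equivalent area count with a uniform lower bound on the injectivity radius along segments away from their endpoints), at which point it loses its advantage over the paper's route.
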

\begin{proof}
Let $\tilde{C}\rar C$ be the universal cover and let
$\tilde{\ph}$ be the pullback of $\ph$ to $\tilde{C}$.
Denote by $\tilde{\vc{p}}\subset\tilde{C}$ the subset of points
that project to $\vc{p}\subset C$.

Let $D\subset\tilde{C}$ be a fundamental domain and $R$ its
diameter (with respect to the $|\tilde{\ph}|^2$ metric). Fix a point
$\tilde{x}\in D$. Then each segment of length at most $\ell$
is contained in the ball $B(\tilde{x},\ell+R)\subset \tilde{C}$.
Since $C$ is compact and so its systole
is positive, $B(\tilde{x},\ell+R)$ intersects only finitely many
translates of $D$ and so $B(\tilde{x},\ell+R)\cap \tilde{\vc{p}}$ is finite.
Thus, there are only finitely many segments completely contained in $B(\tilde{x},\ell+R)$ and this proves (a).

As a consequence, the systole of $(C,\vc{p},\ph)$ is certainly
attained at some arc $\gamma$.
Write the $\ph$-geodesic representative of $\gamma$ as a
concatenation of proper segments $\gamma^{(1)}\cup\dots\cup\gamma^{(s)}$.
If $s>1$, then $\ell_{\gamma^{(1)}}(\ph)<\ell_\gamma(\ph)=
\ell_{sys}(\ph)$ would give a contradiction. Hence, $s=1$ and
$\gamma$ is a proper segment for $(C,\vc{p},\ph)$, which proves (b).
\end{proof}

Let $(C,\vc{p},\ph)\in\Omega\M'_{g,n}(\vc{m})$ 
and consider $U\subset \Omega\M'_{g,n}(\vc{m})$ a small contractible open neighbourhood of $(C,\vc{p},\ph)$,
so that the universal family
of marked Riemann surfaces can be $C^\infty$ identified to
$(C,\vc{p})\times U$
over $U$ and 
$\Pcal$ maps $U$ biholomorphically onto an open subset of $H^1(C,\vc{p};\CC)$.
We can think of points of $u\in U$ as of closed
differential $1$-forms $\ph_u$ on the fixed marked surface $(C,\vc{p})$.

We recall that,
for every $(\gamma)\in H_1(C,\vc{p};\ZZ)$, the {\it{local period}}
\[
\Pcal_\gamma: U \rar \CC
\]
defined as $\Pcal_\gamma(u):=\int_\gamma \ph_u$ is holomorphic by Proposition \ref{prop:period-closed-hol}.
Moreover, if $\gamma=\gamma^{(1)}\cup\dots\cup\gamma^{(s)}$ is 
a decomposition of $\gamma$ in a union of $\ph_u$-segments,
then $|\Pcal_{\gamma^{(i)}}(u)|=\ell_{\gamma^{(i)}}(u)$
and $\ell_\gamma(u)=\sum_i \ell_{\gamma^{(i)}}(u)=
\sum_i |\Pcal_{\gamma^{(i)}}(u)|\geq |\Pcal_\gamma(u)|$.

\begin{notation}
Let $U_\gamma\subset U$ the locus of differentials for which $\gamma$
can be represented by a proper segment and $U_B:=\bigcap_{\gamma\in B}U_\gamma$ the locus of differentials for which
$B$ is a basis made of proper segments.
\end{notation}

\begin{lemma}[Short bases are made of proper segments]\label{lemma:ell-properties}
The following properties hold.
\begin{itemize}
\item[(a)]
For every arc $\gamma$ and basis $B$,
the loci $U_\gamma\subset U$ and $U_B\subset U$ are open.
\item[(b)]
Let $f: \RR_+\rar\RR_{\geq 0}$ be a strictly decreasing continuous function.
Then
\begin{itemize}
\item[(b1)]
the function
$\dis
f_\Bcal:=\sup_{B\in \Bcal} f_B : U\lra\RR_{\geq 0}
$
is well-defined, where
$ %\displaystyle
f_B:=\sum_{\gamma\in B} (f\circ\ell_\gamma)$;% : U \lra \RR_{\geq 0}$.
\item[(b2)]
the function $f_\Bcal$
is locally the maximum of finitely many $f_B|_{U_B}$;
\item[(b3)]
if the value $f_\Bcal(u)$ is attained at a basis $B\in\Bcal$,
then $B$ contains an arc that realizes the systole for $|\ph_u|^2$
and so $f_\Bcal(u)\geq f(\ell_{sys}(\ph_u))$.
\end{itemize}
\end{itemize}
\end{lemma}
\begin{proof}
To prove (a), it is clearly enough to prove that $U_\gamma$ is a neighbourhood of each $u\in U_\gamma$.

Let $\tilde{C}\rar C$ be the universal cover and denote by
$\tilde{\vc{p}}\subset \tilde{C}$ the points that map to $\vc{p}$.
The arc $\gamma$ lifts to an arc $\tilde{\gamma}$
with distinct endpoints and every $\ph_u$ on $C$ pulls back
to a $\tilde{\ph}_u$ on $\tilde{C}$.
Call $\tilde{\gamma}_u$ the $\tilde{\ph}_u$-geodesic on $\tilde{C}$ homotopic to $\tilde{\gamma}$.

Since $\tilde{\gamma}_u$ is a proper segment, there exists
an $\e>0$ such that the $\e$-neighbourhood $N_\e$ of $\tilde{\gamma}_u$
with respect to $|\tilde{\ph}_u|^2$ does not contain any other
point of $\tilde{\vc{p}}$.

Thus, there exists a neighbourhood $U'\subset U$ of $u$
such that $\ell_{\gamma}(\ph_{u'})<\ell_\gamma(\ph_u)+\e/4$
and $dist_{|\ph_{u'}|^2}(\tilde{\gamma}_u,\pa N_\e)>\e/2$
for all $u'\in U'$. Hence, $\gamma_{u'}$ is realized by a
$|\ph_{u'}|^2$-geodesic contained inside $N_\e$, which is thus
a proper segment, for all $u'\in U'$. This shows that $U'\subset U_\gamma$ and so $U_\gamma$ is open.

In order to prove (b1) and (b2), we want to show that
\begin{itemize}
\item[(i)]
for every $u\in U$ there exist finitely many bases $B_1,\dots,B_r\in\Bcal$ such that
$f_\Bcal(u)=f_{B_j}(u)$;
\item[(ii)]
each arc in each $B_j$ is realized by a proper $\ph_u$-segment (and so $u\in U_{B_1}\cap\dots\cap U_{B_r}$);
\item[(iii)]
there exists a relatively compact neighbourhood $U''\subset U_{B_1}\cap\dots\cap U_{B_r}$ of $u$ such that $f_\Bcal|_{_{U''}}=\max_j f_{B_j}|_{_{U''}}$.
\end{itemize}
%
%, for every $u\in U$
%there exist finitely many bases $B_1,\dots,B_r$ and a neighbourhood
%$U''\subset U$ of $u$ such that $U''\subset U_{B_1}\cap\dots\cap U_{B_r}$
%and $f_{\Bcal}|_{U''}=\max \left\{ f_{B_j}|_{_{U''}}\,\big|
%\, j=1,\dots,r \right\}$.

Pick a compact neighbourhood $K\subset U$ of $u$.
By Lemma \ref{lemma:ell-properties0}(a) and the
fact that $f$ is strictly decreasing, there exists finitely many
bases $B_1,\dots,B_r,B_{r+1},\dots,B_{r+s}\in\Bcal$ such that
$f_{\Bcal}|_{_{K}}=\max_{j=1}^{r+s} f_{B_j}|_{_{K}}$, which already
proves Claim (i).
Without loss of generality, we can assume that $f_\Bcal(u)=f_{B_j}(u)$
for $j=1,\dots,r$ and that $f_{\Bcal}(u)>f_{B_j}(u)$ for $j=r+1,\dots,r+s$.
Thus, up to choosing a smaller neighbourhood $K$ of $u$, we
can assume $s=0$ and so Claim (iii) will follow from Claim (ii) by choosing $U''=K\cap U_{B_1}\cap\dots\cap U_{B_r}$.

As for claim (ii), we proceed by contradiction:
we assume that $B_j=\{\gamma_1,\dots,\gamma_{2g+n+k-1}\}$ and that $\gamma_i$ is realized by a $\ph_u$-geodesic which is not a proper segment.
Then $\gamma_i$ is a nontrivial concatenation
of two paths $\gamma'_i$ and $\gamma''_i$, where $\gamma'_i$ and
$\gamma''_i$ are union of proper segments, so that
$\ell_{\gamma'_i},\ell_{\gamma''_i}<\ell_{\gamma_i}$.

Since $B_j$ is a basis of $H_1(C,\vc{p};\RR)$,
we can write $(\gamma'_i)=\sum_h a'_h (\gamma_h)$ and
$(\gamma''_i)=\sum_h a''_h(\gamma_h)$.
As $a'_i+a''_i=1$, at least one of the two must be non-zero: without loss of generality, we can assume that $a'_i\neq 0$.
Then $B'_j:=(B_j\setminus\{\gamma_i\})\cup\{\gamma'_i\}$ is again a basis of
arcs, but $f_{B'_j}(u)>f_{B_j}(u)=f_\Bcal(u)$, against the definition of $f_{\Bcal}$.

%Concerning claim (3), observe that Lemma \ref{lemma:ell-properties0}(a) and the
%fact that $f$ is strictly decreasing also imply that there are finitely many
%bases $\{B_1,\dots,B_r,B_{r+1},\dots,B_{r+s}\}$ such that
%$f_{\Bcal}|_{_{U''}}=\max_{i=1}^{r+s} f_{B_i}|_{_{U''}}$
%
%
%Observe that a $B$ such that $f_B$ realizes the sup at $u$ always exists,
%because of Lemma \ref{lemma:ell-properties0}(a).
%Indeed, for every relatively compact neighbourhood $K\subset U$ of
%$u$ and for every $\e>0$
%the set
%\[
%\{B\in\Bcal\ |\ K\cap U_B\neq\emptyset,\ \text{and $f_B>f_\Bcal-\e$ on $U_B\cap K$}\}
%\] 
%is finite.
%
%To complete the proof of (b1)-(b2), we need to show that a $B$ such that $f_B(u)=f_\Bcal(u)$
%is made of proper segments.
%By contradiction, let $B=\{\gamma_1,\dots,\gamma_{2g+n+k-1}\}$ and suppose that $\gamma_i$ is not a proper segment.
%Then $\gamma_i$ is the concatenation
%of two paths $\gamma'_i$ and $\gamma''_i$, where $\gamma'_i$ and
%$\gamma''_i$ are union of proper segments.
%Since $B$ is a basis of $H_1(C,P;\RR)$,
%we can write $(\gamma'_i)=\sum_h a'_h (\gamma_h)$ and
%$(\gamma''_i)=\sum_h a''_h(\gamma_h)$.
%As $a'_i+a''_i=1$, at least one of the two must be nonzero: without loss of generality, we can assume that $a'_i\neq 0$.
%Then $B':=(B\setminus\{\gamma_i\})\cup\{\gamma'_i\}$ is again a basis of
%(possibly not proper) segments for $\ph_u$, but $f_{B'}(u)>f_B(u)
%=f_\Bcal(u)$, against the definition of $f_{\Bcal}$.

A similar argument is used to prove (b3). Indeed, assume by
contradiction that $\ell_{\gamma_i}(\ph_u)>\ell_{sys}(\ph_u)$
for all $\gamma_i\in B_j$ 
and pick an arc $\gamma'$ that realizes the systole for $|\ph_u|^2$.
By Lemma \ref{lemma:ell-properties0}(b), such $\gamma'$ is a
$\ph_u$-segment and so 
$0<\ell_{\gamma'}(\ph_u)=\int_{\gamma'}\ph_u$, which implies that
$(\gamma')\neq 0$ in $H_1(C,\vc{p};\RR)$.
Thus, $\{\gamma'\}$ can be completed to an $\RR$-basis $B'_j$ of $H_1(C,\vc{p};\RR)$ using
elements of $B_j$: in particular $B'_j=\{\gamma'\}\cup\{\gamma_i\,|\,i\neq i'\}$ for some $i'$.
Since $\ell_{\gamma'}(\ph_u)<\ell_{\gamma_i}(\ph_u)$ for all $i$ and $f$ is strictly
decreasing, it follows that $f_{B'_j}(\ph_u)>f_{B_j}(\ph_u)=f_{\Bcal}(\ph_u)$ against the definition of $f_\Bcal$.
\end{proof}

As the $f_\Bcal$ above defined in
Lemma \ref{lemma:ell-properties}(b) is the sup of {\it{all}} bases, it is clearly independent of the local
trivialization of the family and so it globalizes over
$\Omega\M'_{g,n}(\vc{m})$. Moreover, it is invariant under labelling of the marked points and so
we have obtained the following.

\begin{corollary}[Functions of arc lengths]\label{cor:functions-lengths}
For every continuous strictly decreasing $f:\RR_+\rar\RR_{\geq 0}$, there is a global
$f_\Bcal:\Omega\M_{g,n}(\vc{m})\rar\RR_{\geq 0}$, which is locally the maximum of finitely many
functions which are as regular as $f$,
and such that $f_\Bcal\geq f\circ\ell_{sys}$.
Moreover, if $f$ is homogeneous of degree $d$, then $f_\Bcal$ is
homogeneous of degree $d$.
\end{corollary}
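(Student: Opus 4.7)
The plan is to verify in turn: (i) that $f_\Bcal$ is intrinsic to $(C,P,\ph)$ and therefore globalizes from the local charts $U$ to $\Hcal'_n(m)^\ze$; (ii) that it is $\aut_n(m)$-invariant and descends; (iii) that the local regularity is inherited from $f$; and (iv) that homogeneity transfers.

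For (i) and (ii), the key observation is that the definition $f_B(C,P,\ph)=\sum_{\gamma\in B}f(\ell_\gamma(\ph))$ depends only on the $|\ph|^2$-lengths of the arcs in $B$, and the set $\Bcal$ of bases of $H_1(C,P;\ZZ)$ consisting of arcs is an invariant of the marked surface $(C,P)$. In particular the sup $f_\Bcal(C,P,\ph)$ does not see any topological trivialization of the universal family used to build the local chart $U$, so the locally defined functions agree on overlaps and patch to a global function on $\Hcal'_n(m)^\ze$. The finite cover $\Hcal'_n(m)^\ze\rar\Hcal_n(m)^\ze$ is by $\aut_n(m)$, which relabels the marked points; such a relabelling sends $\Bcal$ to itself bijectively and preserves lengths, so $f_\Bcal$ is $\aut_n(m)$-invariant and factors through $\Hcal_n(m)^\ze$.

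For (iii), Lemma \ref{lemma:ell-properties}(b) already shows that, locally on $U$, the function $f_\Bcal$ equals the maximum of finitely many $f_{B_1}|_{U_{B_1}},\dots,f_{B_r}|_{U_{B_r}}$. It thus suffices to check that each $f_B|_{U_B}$ is as regular as $f$. On $U_B$ every $\gamma\in B$ is a proper segment, hence $\ell_\gamma(\ph_u)=|\Pcal_\gamma(u)|$ where $\Pcal_\gamma$ is a holomorphic period (by Proposition \ref{prop:period-locally}) with $\Pcal_\gamma(u)\neq 0$ since a nontrivial arc has positive length. Therefore each $\ell_\gamma$ is real-analytic and nonvanishing on $U_B$, and $f_B=\sum_{\gamma\in B}f\circ\ell_\gamma$ inherits the regularity of $f$.

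Finally, homogeneity in (iv) is immediate: rescaling $\ph\mapsto\lambda\ph$ rescales the flat metric by $|\lambda|^2$, hence every length by $|\lambda|$, so $f(\ell_\gamma(\lambda\ph))=|\lambda|^d f(\ell_\gamma(\ph))$ when $f$ is homogeneous of weight $d$. This scaling factor comes out of both the sum defining $f_B$ and the monotone supremum defining $f_\Bcal$, giving $f_\Bcal(C,P,\lambda\ph)=|\lambda|^d f_\Bcal(C,P,\ph)$. There is no real obstacle here: all the substance has been packed into the preceding Lemmas \ref{lemma:ell-properties0} and \ref{lemma:ell-properties}, and the only point requiring a moment of care is confirming that the sup over a possibly infinite family of bases is nevertheless intrinsic and locally finite, which is precisely what the finiteness statement in Lemma \ref{lemma:ell-properties0}(a) supplies.
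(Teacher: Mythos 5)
Your proposal is correct and follows essentially the same route as the paper: the paper's own justification is the one-sentence remark immediately preceding the corollary, which cites the local-max structure from Lemma \ref{lemma:ell-properties}(b), trivialization-independence of the sup over \emph{all} bases, and $\aut_n(m)$-invariance. Your elaboration of the regularity step (that on $U_B$ each $\ell_\gamma=|\Pcal_\gamma|$ is real-analytic and bounded away from zero, so $f_B=\sum f\circ\ell_\gamma$ inherits $f$'s regularity) and of the homogeneity step are correct and merely make explicit what the paper leaves implicit.
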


The decreasing function we will be interested in is
the {\it{inverse square length functional}} $\ell^{-2}_\Bcal: \Omega\M_{g,n}(\vc{m})\rar\RR_+$ associated to $f(x):=x^{-2}$.

\begin{lemma}[Inverse square length functional]\label{lemma:ell-properties2}
The function $\ell^{-2}_\Bcal$
has the following properties:
\begin{itemize}
\item[(a)]
$\ell^{-2}_\Bcal$ is homogeneous of degree $-2$;
\item[(b)]
$\ell^{-2}_\Bcal$ is strictly plurisubharmonic on $\Omega\M_{g,n}(\vc{m})$, thus
$\log(\ell^{-2}_\Bcal)$ is plurisubharmonic on $\Omega\M_{g,n}(\vc{m})$ and
it is strictly plurisubharmonic when restricted to a codimension $1$ submanifold
transverse to the rays of $\Omega\M_{g,n}(\vc{m})$;
%\rar\PP\Omega\M_{g,n}(\vc{m})$;
\item[(c)]
$\ell^{-2}_\Bcal$ and $\log(\ell^{-2}_\Bcal)$ satisfy condition $(\star)$.
%\item[(d)]
%$\ell^{-2}_\Bcal$ has polynomial divergence at points in
%$\pa_{red}\Omega\M_{g,n}(\vc{m})$ and $\ol{\delta}\Omega\M_{g,n}(\vc{m})$ and
%it remains bounded and positive
%at the remaining points of $\pa_{irr}\Omega\M_{g,n}(\vc{m})$.
\end{itemize}
\end{lemma}
\begin{proof}
Part (a) is obvious. Claim (c) immediately follows from (b).

In order to prove (b), we can work on 
the moduli space $\Omega\M'_{g,n}(\vc{m})$ of Abelian differentials with marked zeros, because the map 
$\Omega\M'_{g,n}(\vc{m})\rar\Omega\M_{g,n}(\vc{m})$ 
that forgets the last $k$ markings
is a finite \'etale cover.
By Lemma \ref{lemma:ell-properties}(b), 
every $(C,\vc{p},\ph)\in\Omega\M'_{g,n}(\vc{m})$
has a neighbourhood on which
$\ell^{-2}_\Bcal$ can be written as the maximum
of $\ell_{B_1}^{-2},\dots,\ell_{B_r}^{-2}$ and
the arcs in $B_1,\dots,B_r$ are realized by proper segments
with respect to every Abelian differential in such a neighbourhood.
Hence, $\ell_{B_j}^{-2}=
\sum_{\gamma\in B_j} |\Pcal_\gamma|^{-2}$,
which is strongly plurisubharmonic, because $B_j$ is a basis.
Being locally the maximum of finitely many strongly plurisubharmonic functions, $\ell^{-2}_\Bcal$ is strongly plurisubharmonic too.
It is straightforward to check that
$i\pa\ol{\pa}\log(\ell_{B_j}^{-2})$ is semipositive-definite and its radical
coincides with the direction given by rescaling. 
\end{proof}

As the area functional is $2$-homogeneous and $\ell_{sys}$ is $1$-homogeneous, the ratio $\frac{A(\ph)}{\ell_{sys}^2(\ph)}$ 
is invariant under rescaling of the Abelian differential $\ph$. 
The following is a rephrasing of Proposition 1 in \cite{kms:ergodicity}. 

\begin{proposition}[Behaviour of diverging sequences]\label{prop:diverging}
Let $(C_s,[\ph_s])_{s\in\NN}$ be a divergent sequence
in the moduli space $\PP\Omega\M_{g,n}(\vc{m})$
of projective Abelian differentials.
Then either $\liminf\ell_{sys}(\ph_s)\rar 0$ or $\limsup A(\ph_s)\rar\infty$
as $s\rar\infty$.
Hence, the function $\frac{A}{\ell^2_{sys}}:\PP\Omega\M_{g,n}(\vc{m})\rar\RR_{\geq 0}$ is well-defined and proper.
The same conclusion holds for the induced function on the moduli space $\PP\Omega\M'_{g,n}(\vc{m})$ of Abelian differentials with marked zeros.
\end{proposition}

%%%%%%%%%%%%%%%%%%%%%%%

\subsection{Cohomological dimension of strata}

%Let $g\geq 2$ and let $m:\ul{n+k}\rar \NN$ be a multiplicity datum
%in genus $g$.

Since $A$ and $\ell_\Bcal^{-2}$ are homogeneous of degrees $2$ and $-2$ respectively,
the function
\[
\wti{\exh}_{\vc{m}}:=\log(A\cdot\ell^{-2}_{\Bcal}):\Omega\M'_{g,n}(\vc{m})\lra\RR
\]
is $\CC^*$-invariant and so it descends to a well-defined
\[
\exh_{\vc{m}}:\PP\Omega\M'_{g,n}(\vc{m})\lra\RR
\]

%
%Since
%$A$ and $\ell^{-2}_\Bcal$ are homogenous of degrees $2$ and $-2$
%respectively, the function
%\[
%\wti{\exh}_{\vc{m}}:=\log(A\cdot\ell^{-2}_{\Bcal}):\Omega\M'_{g,n}(\vc{m})\lra\RR
%\]
%is $\CC^*$-invariant and
%so it descends to a well-defined
%\[
%\exh_{\vc{m}}:\PP\Omega\M'_{g,n}(\vc{m})\lra\RR
%\]
%
%we can define the function $\exh_{\vc{m}}$ as
%%$\Exh_m:\Hcal'_n(m)^\ze\rar\RR_+$ defined as
%%$\Exh_m(C,P,\ph):=A(C,P,\ph)\ell^{-2}_\Bcal(C,P,\ph)$ and define
%\[
%\exh_{\vc{m}}:=\log(A\cdot\ell^{-2}_{\Bcal}):\PP\Omega\M'_{g,n}(\vc{m})\rar\RR
%\]

We are now ready to prove the following.

\begin{proposition}[Strata are $(g+1)$-complete]\label{prop:function}
The above defined $\exh_{\vc{m}}$ is an $\aut_n(\vc{m})$-invariant
exhaustion function on the moduli space
$\PP\Omega\M'_{g,n}(\vc{m})$ of Abelian differentials
with marked zeros
and it
satisfies condition ($\star$).
Hence, $\exh_{\vc{m}}$ is
strongly $(g+1)$-convex.
\end{proposition}

\begin{proof}
It is immediate to see that $\exh_{\vc{m}}\geq \frac{A}{\ell^2_{sys}}$,
because $\ell^{-2}_{\Bcal}\geq \ell^{-2}_{sys}$
by Corollary \ref{cor:functions-lengths}.
Since $\frac{A}{\ell^2_{sys}}$ is an exhaustion function on $\PP\Omega\M'_{g,n}(\vc{m})$ due to
Proposition \ref{prop:diverging}, the same holds for $\exh_{\vc{m}}$.
Moreover, both $A$ and $\ell^{-2}_\Bcal$ are $\aut_n(\vc{m})$-invariant, and so is $\exh_{\vc{m}}$.
%
%Considering $\exh_{\vc{m}}$ as a function on $\Omega\M'_{g,n}(\vc{m})$, we can write
Now consider
\[
i\pa\ol{\pa}\wti{\exh}_{\vc{m}}=
i\pa\ol{\pa}\log(A)+i\pa\ol{\pa}\log(\ell^{-2}_\Bcal)
\]
We recall that, by Lemma \ref{lemma:A}(d)
and Lemma \ref{lemma:ell-properties2}(c),
the functions $\log(A)$ and $\log(\ell^{-2}_\Bcal)$
satisfy condition weak $(\star)$ and condition $(\star)$ respectively.
Hence, $\wti{\exh}_{\vc{m}}$ satisfies condition $(\star)$ by
Lemma \ref{lemma:star-properties}(d) and so
$\exh_{\vc{m}}:\PP\Omega\M'_{g,n}(\vc{m})\rar\RR$ is strongly $(g+1)$-convex.
\end{proof}

As a consequence, we obtain our first main result.

\begin{proof}[Proof of Theorem \ref{mthm:strata}]
We recall that, given $\sigma\in\Sfrak$,
the stratum $\PP\Omega\M'_{g,n}(\sigma)$ can be
identified to $\PP\Omega\M'_{g,n}(\vc{m})$
with $\vc{m}=\sigma_*\vc{\mo}$.
Moreover, the moduli space $\PP\Omega\M'_{g,n}(\vc{m})$
of projective Abelian differentials 
with marked zeros of orders $\vc{m}$
has $\cohdim_{Dol}\leq g$
by Proposition \ref{prop:function} and
Theorem \ref{thm:q-convex}.
\end{proof}

Since the map $\PP\Omega\M'_{g,n}(\vc{m})\rar\PP\Omega\M_{g,n}(\vc{m})$ that forgets the last $k$ markings 
is a finite \'etale and surjective cover,
we conclude that $\cohdim_{Dol}(\PP\Omega\M_{g,n}(\vc{m}))\leq g$ too
by Lemma \ref{lemma:fibr}.

\section{Thickening the strata}\label{sec:thickening}
%%%%%%%%%%%%%%%%%%%%%%%%%%%%%%%%%%%%

We have seen the moduli space $\PP\Omega\M'_{g,n}$
of projective Abelian differentials with marked zeros
is stratified by the locally closed submanifolds $\PP\Omega\M'_{g,n}(\sigma)$
of codimension $d(\sigma)$, where the surjection $\sigma\in\Sfrak$ describes
how the marked zeros of the differential coalesce.

It is intuitively clear that 
$(C,\vc{p},[\ph])\in\PP\Omega\M'_{g,n}$ lying in
a small neighbourhood $V_\sigma$ of $\PP\Omega\M'_{g,n}(\sigma)$
is represented by a flat surface on which,
for every $j$, the points 
$\vc{p}(\sigma^{-1}(j))$ are close to one another, namely they belong to a small disk
$\disk^j_\sigma(\ph)\subset C$.
As $(C,\vc{p},[\ph])$ approaches the stratum $\PP\Omega\M'_{g,n}(\sigma)$,
the points $\vc{p}(\sigma^{-1}(j))$ tend to coalesce and the disks $\disk^j_\sigma(\ph)$ can be chosen to be smaller and smaller. Moreover, it seems reasonable that the disks $\disk^j_\sigma(\ph)$ can be kept disjoint, by picking $V_\sigma$ thin enough.

In the first part of the current section we will make this idea precise and we will produce
a cover $\Vfrak=\{V_\sigma\,|\,\sigma\in\Sfrak\}$
adapted to the stratification $\{\PP\Omega\M'_{g,n}(\sigma)\,|\,\sigma\in\Sfrak\}$
in the sense of Definition
\ref{def:adapted}.

Then we will study certain geometric functions on the thickenings $V_\sigma$ in a similar fashion as we did for locally closed strata,
since their complex Hessian is again easily computable in period coordinates.

The first function we wish to consider, $\eta_\sigma$, is essentially an extension of the exhaustion function
on $\PP\Omega\M'_{g,n}(\sigma)$ we defined in the previous section, 
and so in particular $\eta_\sigma$ is proper in directions ``parallel to the stratum''. Such an $\eta_\sigma$ is
obtained by combining length functions of short arcs that join the disks $\disk^j_\sigma$.
The second function $\zeta_\sigma$ is constructed using
periods of long paths contained inside the disks $\disk^j_\sigma$. In order to obtain an exhaustion
function, it is necessary to consider a combination $\xi_\sigma=\eta_\sigma+\chi\circ\zeta_\sigma$, where
$\chi$ is a suitable convex function that diverges very quickly and to restrict the domain to a smaller
subset $W_\sigma\subset V_\sigma$.

The desired estimate for $\cohdim_{Dol}(\PP\Omega\M'_{g,n})$
will follow by computing the dimension of the nerve of the cover $\Wfrak=\{W_\sigma\}$
and by showing that each $\xi_\sigma$ satisfies property $(\star)$ and so
$\xi_{\sigma_0}+\dots+\xi_{\sigma_d}$ endows
the intersection $W_{\sigma_0}\cap\cdots\cap W_{\sigma_d}$ (whenever non-empty)
with a strongly $(g+1)$-convex exhaustion function.

\subsection{Merging data and good covers}

We first formalize what it means for the disks $\disk^j_\sigma(\ph)\subset C$
to move in families as $(C,\vc{p},\ph)$ varies in $V_\sigma$
and what further properties we require from such disks.

%The second property we ask our cover is that for all $(C,\vc{p},[\ph])
%\in V_\sigma$ all marked points $\vc{p}(\sigma^{-1}(j))$ are
%``about to clash''.

\begin{definition}\label{def:clashing}
Let $\Vfrak=\{V_\sigma\,|\,\sigma\in\Sfrak\}$ be an open cover 
of $\PP\Omega\M'_{g,n}$ adapted to the stratification in
the sense of Definition \ref{def:adapted}.
Denote by $\C_{V_\sigma}\rar V_\sigma$
the induced tautological family of Riemann surfaces.
A {\it{merging datum}} for $V_\sigma$ is
a collection $\{\disk^j_\sigma
\,|\, 1\leq j\leq n+2g-2-d(\sigma)\}$ 
of open subsets of $\C_{V_\sigma}$
that satisfies the following properties:
\begin{itemize}
\item[(i)]
$\disk^j_\sigma\rar V_\sigma$ is a disk bundle for all
$j=1,\dots,n+2g-2-d(\sigma)$;
\item[(ii)]
$\disk^j_\sigma\cap \disk^{j'}_\sigma=\emptyset$
if $j\neq j'$;
\item[(iii)]
the marking $\vc{P}$ restricts to
$V_\sigma\times\sigma^{-1}(j)\rar \disk^j_\sigma
\subset\C_{V_\sigma}$
for every $j=1,\dots,n+2g-2-d(\sigma)$;
\item[(iv)]
at each $(C,\vc{p},[\ph])\in V_\sigma$, proper $\ph$-segments between
marked points
contained in
\[
\disk_\sigma(\ph)=\bigcup_{j=1}^{n+2g-2-d(\sigma)} 
\disk^j_\sigma(\ph),
\quad
\text{where}
\ \disk^j_\sigma(\ph):=
\disk^j_\sigma\Big|_{(C,\vc{p},[\ph])},
\]
%are shorter than $\frac{1}{2g+n}$ all the others and 
%their classes 
generate
$H_1(\disk_\sigma(\ph),\vc{p};\RR)$. Call {\it{inner}} the arcs homotopic to such segments
and {\it{outer}} all the others.
\end{itemize}
Given $c>0$, the collection $\{\disk^j_\sigma\}$ is called a {\it{$c$-merging datum}}
if $\frac{\ell_\beta(\ph)}{\ell_{\gamma}(\ph)}<c$ for every
$(C,\vc{p},[\ph])\in V_\sigma$ and for every
inner arc $\beta$ and outer arc $\gamma$ on $(C,\vc{p},[\ph])$.
\end{definition}

\begin{definition}
A {\it{good cover}} of $\PP\Omega\M'_{g,n}$ is an adapted cover $\Vfrak$
that admits a $c$-merging datum for
each $V_\sigma\in\Vfrak$ and some fixed $c\leq c_0(g,n):=\frac{1}{2g+n}$.
\end{definition}

\begin{center}
\begin{figurehere}
\psfrag{C}{$C$}
\psfrag{N1}{$\disk^1_\sigma$}
\psfrag{N2}{$\disk^2_\sigma$}
\psfrag{N3}{$\disk^3_\sigma$}
\psfrag{p1}{$p_1$}
\psfrag{p2}{$p_2$}
\psfrag{p3}{$p_3$}
\psfrag{p4}{$p_4$}
\psfrag{p5}{$p_5$}
\psfrag{b}{$\beta$}
\psfrag{g}{$\gamma$}
\includegraphics[width=0.6\textwidth]{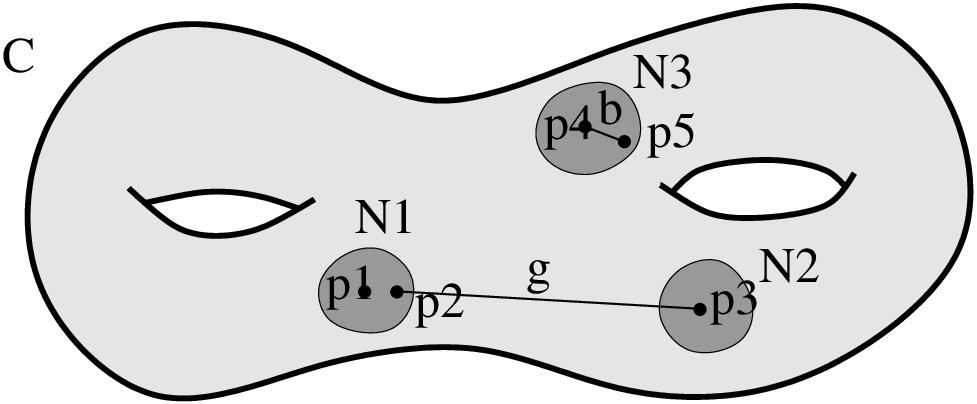}
\caption{{\small Example of merging datum corresponding to $\sigma(1)=\sigma(2)=1$, $\sigma(3)=2$ and $\sigma(4)=\sigma(5)=3$ with an inner $\beta$ and an outer $\gamma$ segment.}}\label{fig:clash}
\end{figurehere}
\end{center}

\begin{remark}
Property (iv) in Definition \ref{def:clashing} is essentially a convexity property for the disks with respect to the flat metric. The existence of a $c$-merging datum with a small costant $c$, as required for a good cover, makes the surfaces $(C,\vc{p},[\ph])\in V_\sigma$ look like as in Figure \ref{fig:clash}
and forces $V_\sigma$ to be thin enough. The exact value of $c_0$
can be understood in view of Remark \ref{rmk:short-geodesics}.
\end{remark}

We begin with a preliminary observation.

\begin{lemma}\label{lemma:refinement}
Every refinement 
$\Wfrak=\{W_\sigma\}$
of a good cover $\Vfrak=\{V_\sigma\}$
such that $W_\sigma$ contains 
the stratum $\PP\Omega\M'_{g,n}(\sigma)$
%
%that satisfies property (AS1) in Definition \ref{def:adapted}
is also good.
\end{lemma}
\begin{proof}
Since the restriction of a $c$-merging datum for $V_\sigma$ to a smaller open subset of
$\PP\Omega\M'_{g,n}$ is still a $c$-merging datum, it is enough
to show that $\Wfrak$ is adapted to the stratification.

Now, property (AS1) in Definition \ref{def:adapted} is satisfied by assumption; thus, we only have to check property (AS2).

If $\sigma\preceq\tau$, then $W_\sigma\cap \PP\Omega\M'_{g,n}(\tau)\neq\emptyset$ and so $W_\sigma\cap W_\tau\neq\emptyset$
by (AS1).
On the other hand, if $\sigma\npreceq\tau$ and
$\tau\npreceq\sigma$, then $W_\sigma\cap W_\tau$ is contained inside
$V_\sigma\cap V_\tau=\emptyset$ and so it is empty.
This proves (AS2) for $\Wfrak$ and so the second claim.
\end{proof}

The aim of this subsection is to show the following.

\begin{proposition}[Existence of good covers]\label{prop:good}
Good covers of 
the moduli space
$\PP\Omega\M'_{g,n}$ of projective Abelian differentials
with marked zeros exist.
\end{proposition}

Clearly, it is enough to work with
$\CC^*$-invariant subsets of $\Omega\M'_{g,n}$ and to produce
a cover $\wti{\Vfrak}=\left\{\wti{V}_\sigma\right\}$ 
of the unprojectivized moduli space $\Omega\M'_{g,n}$
that descends to the wished good cover $\Vfrak=\{V_\sigma\}$ of $\PP\Omega\M'_{g,n}$.

\begin{definition}
The {\it{injectivity radius}} {\it{relative to ${\sigma}$}}
is the function
$R_\sigma:\Omega\M'_{g,n}\rar\RR_{\geq 0}$ defined as
\[
R_\sigma(C,\vc{p},\ph):=\frac{1}{2}\mathrm{min}
\Big\{
dist_{\ph}(p_i,p_{i'})
\ \Big|
\ \sigma(i)\neq\sigma(i')
\Big\}
\cup
\left\{
\ell_{\gamma}(\ph)\,\Big|\,
\text{$\gamma$ nontrivial loop}
\right\}
\]
\end{definition}

Notice that $R_\sigma$ is $1$-homogeneous, it
vanishes exactly
on those $\Omega\M'_{g,n}(\tau)$ such that $\sigma\npreceq\tau$,
and it is strictly positive elsewhere.\\

{\bf{Definition of the disks.}}
Let $c\in (0,1)$ be a parameter to be determined later.

Fix $\sigma\in\Sfrak$.
For every $j=1,\dots,n+2g-2-d(\sigma)$ denote by
$i_j\in[1,n+2g-2]$ the smallest
integer such that $\sigma(i_j)=j$.

For every $(C,\vc{p},\ph)\in \Omega\M'_{g,n}$ at which $R_\sigma(\ph)>0$,
denote by $\widetilde{\disk}^j_\sigma(\ph)\subset C$ the open $\ph$-ball centered at $p_{i_j}$
of radius 
$r_\sigma(\ph):=\frac{c}{2}R_\sigma(\ph)$,
which is a topological $2$-disk and let $\widetilde{\disk}^j_\sigma\subset\Omega\C'_{g,n}$ be the union of all such disks.

\begin{remark}\label{rmk:short-geodesics}
Since the metric $|\ph|^2$ is non-positively curved, the disk
$\wti{\disk}^j_\sigma(\ph)$ is $\ph$-convex in the following sense: for every pair of
points $q,q'\in \wti{\disk}^j_\sigma(\ph)$,
there always exists a unique $\ph$-geodesic $\beta$ of length $<2r_\sigma(\ph)$
completely contained inside $\wti{\disk}^j_\sigma(\ph)$
that joins $q$ and $q'$. Such a $\beta$ is a concatenation of segments
$\beta=\beta^{(1)}\cup\cdots\cup\beta^{(s)}$ and it
may pass through a marked point at most once,
because $\wti{\disk}^j_\sigma(\ph)$ is simply-connected.
Since there are at most $n+2g-2$ distinct marked points, necessarily $s\leq n+2g-3$.
If $q,q'$ are marked points,
then each $\beta^{(e)}$ is an inner segment.
Hence, the condition $\ell_{\beta^{(e)}}(\ph)<c\,\ell_{\gamma}(\ph)$ for all
the inner segments $\beta^{(e)}$ and the outer arcs $\gamma$ with $c\leq c_0$
implies the all (possibly non-smooth) geodesics contained in
$\wti{\disk}^j_\sigma(\ph)$ and joining marked points are shorter than any outer segment, because $c_0 s<1$.
\end{remark}

Now consider the open $\CC^*$-invariant neighbourhood 
\[
\wti{V}_\sigma:=\big\{(C,\vc{p},\ph)\in\Omega\M'_{g,n}\ \big|\ p_i\in\widetilde{\disk}^j_\sigma(\ph)
\ \text{whenever $\sigma(i)=j$}\big\}
\]
of the locally closed stratum $\Omega\M'_{g,n}(\sigma)$ inside $\Omega\M'_{g,n}$
and we define
\[
\wti{\disk}^j_\sigma:=\bigcup_{(C,\vc{p},\ph)\in\wti{V}_\sigma}\wti{\disk}^j_\sigma(\ph)
\]
so that the induced ${\disk}^j_\sigma\rar {V}_\sigma$ is evidently a disk bundle that satisfies property (iii) of Definition \ref{def:clashing}.

\begin{lemma}\label{lemma:disjoint}
The disk bundles ${\disk}^j_\sigma$ are disjoint.
\end{lemma}
\begin{proof}
The statement is equivalent to the disjointness
of the disk bundles $\wti{\disk}^j_\sigma$.

By contradiction, suppose that there exists a point $q\in
\wti{\disk}^j_\sigma\cap \wti{\disk}^{j'}_\sigma$ for some $j\neq j'$,
corresponding to some $(C,\vc{p},\ph)\in \wti{V}_\sigma$, and
let $i=i_j$ and $i'=i_{j'}$.
By definition of $R_\sigma$, we have
\[
R_\sigma(\ph)\leq \frac{1}{2}dist_{\ph}(p_i,p_{i'})\leq \frac{1}{2}dist_{\ph}(p_i,q)+\frac{1}{2}dist_{\ph}(q,p_{i'})
\]
On the other hand, by the choice of the radii of the disks
\[
\frac{1}{2}dist_{\ph}(p_i,q)+\frac{1}{2}dist_{\ph}(q,p_{i'})<
r_\sigma(\ph)<R_\sigma(\ph)
\]
and the claim is proven.
\end{proof}

\begin{lemma}\label{lemma:adapted}
The cover ${\Vfrak}=\{{V}_\sigma\}$ is adapted to the stratification of $\PP\Omega\M'_{g,n}$.
\end{lemma}
\begin{proof}
It is enough to show that $\wti{\Vfrak}=\{\wti{V}_\sigma\}$
is adapted to the stratification of $\Omega\M'_{g,n}$.

%namely
%$V_\sigma\cap V_\tau\neq\emptyset$
%if and only if
%$\sigma\preceq\tau$ or $\tau\preceq \sigma$.}}
%
Clearly, $\Omega\M'_{g,n}(\sigma)\subset \wti{V}_\sigma$
for all $\sigma\in\Sfrak$ and so the property (AS1) is satisfied. Moreover,
$\wti{V}_\sigma\cap \wti{V}_\tau\neq\emptyset$ if
$\sigma\preceq\tau$ or $\tau\preceq\sigma$.

In order to show that (AS2) holds, let's proceed by contradiction and assume
that there exists $(C,\vc{p},\ph)\in \wti{V}_\sigma\cap \wti{V}_\tau$ with $\sigma\npreceq\tau$ and $\tau\npreceq\sigma$.
Since $\sigma\npreceq \tau$, there exist $i,i'\in\{1,\dots,n+2g-2\}$ such that
$\sigma(i)\neq \sigma(i')$ but $\tau(i)=\tau(i')$.
The first condition implies that $R_\sigma(\ph)\leq \frac{1}{2}dist_{\ph}(p_i,p_{i'})$ and the second condition implies that 
%$dist_{|\ph|^2}(p_i,p_k)< \frac{2}{2(2g+n)}R_\tau(\ph)$, 
$dist_{\ph}(p_i,p_{i'})<2r_\tau(\ph)$,
%so that $2(2g+n)\cdot R_{\sigma}(C,P,\ph)< R_\tau(\ph)$.
so that $\frac{2}{c} R_{\sigma}(\ph)< R_\tau(\ph)$.
Vice versa, $\tau\npreceq\sigma$ implies $\frac{2}{c} R_\tau(\ph)<R_\sigma(\ph)$.
Hence, $\left(\frac{2}{c}\right)^2 R_\sigma(\ph)<R_\sigma(\ph)$, which gives the desired contradiction.
\end{proof}

\begin{lemma}\label{lemma:clashing}
For all $\sigma\in\Sfrak$,
the disk bundles $\{{\disk}^j_\sigma\rar {V}_\sigma\,|\,
j=1,\dots,n+2g-2-d(\sigma)\}$ endow
${V}_\sigma$ with a $c$-merging datum.
\end{lemma}
\begin{proof}
After the above discussion, we are only left to verify
property (iv) and that
$\{{\disk}^j_\sigma\}$ is a $c$-merging datum.
As usual, we work with the disk bundles $\wti{\disk}^j_\sigma\rar\wti{V}_\sigma$.

Let $(C,\vc{p},\ph)\in\wti{V}_\sigma$ and let $\sigma(i)=j$.
Because both $p_i,p_{i_j}$ belong to the disk $\wti{\disk}^j_\sigma(\ph)$,
there exists a $\ph$-geodesic between $p_i$ and $p_{i_j}$ contained
inside $\wti{\disk}^j_\sigma(\ph)$, which is thus a concatenation of inner segments,
by Remark \ref{rmk:short-geodesics}.
Hence, $\ph$-segments
contained in $\wti{\disk}_\sigma(\ph)$ generate
$H_1(\wti{\disk}_\sigma(\ph),\vc{p};\RR)$.

Fix now $(C,\vc{p},\ph)\in \wti{V}_\sigma$.
Segments completely contained inside $\wti{\disk}^j_\sigma(\ph)$ have length at most $2r_\sigma(\ph)=c\cdot R_\sigma(\ph)$.
Thus, it is sufficient to show that, for every $(C,\vc{p},\ph)\in \wti{V}_\sigma$,
a $\ph$-segment $\gamma$ not contained in
any $\wti{\disk}^j_\sigma(\ph)$ satisfies $\frac{2r_\sigma(\ph)}{\ell_{\gamma}(\ph)}<c$, that is $\frac{R_\sigma(\ph)}{\ell_{\gamma}(\ph)}<1$.

Let $p_i$ and $p_{i'}$ be the endpoints of such a $\gamma$ (the case $i=i'$ not being excluded). There are two cases.

\begin{itemize}
\item[(a)]
Suppose $\sigma(i)=\sigma(i')=j$.\\
Let $\beta$ the geodesic from $p_{i'}$ to $p_i$
contained inside $\wti{\disk}^j_\sigma(\ph)$.
Since $\gamma\neq\beta$, the concatenation
$\gamma\ast\beta$ is a nontrivial closed loop in $C$.
Thus,
$\ell_{\gamma}(\ph)+\ell_{\beta}(\ph)\geq 2R_\sigma(\ph)$,
which implies 
\[
\ell_\gamma(\ph)\geq 2R_\sigma(\ph)-\ell_\beta(\ph)\geq 2R_{\sigma}(\ph)-2r_{\sigma}(\ph)=(2-c)R_\sigma(\ph)>R_\sigma(\ph)
\]
and so we are done.
\item[(b)]
Suppose that $\sigma(i)=j\neq j'=\sigma(i')$.\\
Since $p_i$ belongs to the disk $\wti{\disk}^j_\sigma(\ph)$ centered at $p_{\bar{\i}}$
with $\bar{\i}=i_j$
and $p_{i'}$ belongs to the disk $\wti{\disk}^{j'}_\sigma(\ph)$ centered at 
$p_{\bar{\i}'}$ with $\bar{\i}'=i_{j'}$,
we have
\begin{align*}
\ell_\gamma(\ph) & \geq dist_{\ph}(p_i,p_{i'})\geq dist_{\ph}(p_{\bar{\i}},p_{\bar{\i}'})-
dist_{\ph}(p_{\bar{\i}},p_{i})-dist_{\ph}(p_{\bar{\i}'},p_{i'})
\geq \\
& \geq 2R_\sigma(\ph)-2r_\sigma(\ph)=
(2-c)R_\sigma(\ph)>R_\sigma(\ph)
\end{align*}
because the two disks are disjoint. Thus, again we are done.
\end{itemize}
\end{proof}

Finally, we have achieved our goal.

\begin{proof}[Proof of Proposition \ref{prop:good}]
%As all constructions are $1$-homogeneous,
%$\wti{\Vfrak}$ induces an open cover $\Vfrak=\{V_\sigma\}$ of
%$\PP\Omega\M'_{g,n}$ adapted to the stratification by Lemma \ref{lemma:adapted}.
%The $\wti{\disk}^j_\sigma\rar\wti{V}_\sigma$
%descend to disk bundles $\disk^j_\sigma\rar V_\sigma$, which
%endow the $V_\sigma$ with
%$c$-merging data by Lemma \ref{lemma:clashing}.
By Lemma \ref{lemma:adapted}, the open cover
$\Vfrak=\{V_\sigma\}$ is adapted to the stratification
of $\PP\Omega\M'_{g,n}$ and the disk bundle $\disk^j_\sigma\rar V_\sigma$
endows $V_\sigma$ with a $c$-merging datum for every $\sigma\in\Sfrak$
by Lemma \ref{lemma:clashing}.
Thus, in order to have a good cover it is enough to choose
$c\leq c_0(g,n)$.
\end{proof}

%%%%%%%%%%%%%%%%%%%%%%%%%%%%%%%%%%%

\subsection{Geometric functions on thickened strata}

Let $\Vfrak$ be a good cover of $\PP\Omega\M'_{g,n}$, whose existence is
granted by Proposition \ref{prop:good}, and let $\wti{\Vfrak}$ the pull-back
cover of $\Omega\M'_{g,n}$.
Fix a $\sigma\in\Sfrak$ and a $c$-merging datum $\{\disk^j_\sigma\rar V_\sigma\}$ for $V_\sigma$.

%
%Let $(C,\vc{p},[\ph])$ be a point of the open neighbourhood $V_\sigma$ of $\PP\Omega\M'_{g,n}(\sigma)$ constructed in Proposition \ref{prop:good}.
%
%
%We recall that $\Delta_{\sigma,[\ph]}$ denotes the union
%of the $\CC^*$-invariant disks $\Delta_{j,\sigma,[\ph]}$, whose existence was established in Proposition \ref{prop:good}.

%Consider the arcs in $(C,\vc{p})$ entired contained inside $\Delta_{\sigma,[\ph]}$. 
%A collection of such arcs will be called a {\bf{$\bm{\Delta}$-basis}} if
%their classes form a basis of $H_1(\Delta_{\sigma,\ph},P;\RR)$. We will denote by
%$\Bcal^\Delta_{\sigma,\ph}$ the set of $\Delta$-bases.

\begin{notation}
For every $(C,\vc{p},\ph)\in\wti{V}_\sigma$ or $(C,\vc{p},[\ph])\in V_\sigma$,
we will denote by $\Hbb_1(\ul{\RR}_{\disk_\sigma(\ph),\vc{p}})$ the $\RR$-dual
of $\Hbb^1(\ul{\RR}_{\disk_\sigma(\ph),\vc{p}})$. Clearly, if the points
$p_i$ are distinct, then $\Hbb_1(\ul{\RR}_{\disk_\sigma(\ph),\vc{p}};\RR)$
coincides with $H_1(\disk_\sigma(\ph),\vc{p};\RR)$.
\end{notation}

\begin{definition}
A {\it{$\sigma$-inner basis}} for $(C,\vc{p},[\ph])\in V_\sigma$
is a collection $B^{inn}$ of arcs contained in $\disk_\sigma(\ph)$,
whose classes form a basis
of $\Hbb_1(\ul{\RR}_{\disk_\sigma(\ph),\vc{p}})$. A {\it{$\sigma$-outer basis}}
for $(C,\vc{p},[\ph])$ is a collection $B^{out}$ of arcs, whose classes form a basis
of $H_1(C,\disk_\sigma(\ph);\RR)$.
\end{definition}

\begin{center}
\begin{figurehere}
\psfrag{C}{$C$}
\psfrag{N1}{$\disk^1_\sigma$}
\psfrag{N2}{$\disk^2_\sigma$}
\psfrag{N3}{$\disk^3_\sigma$}
\psfrag{p1}{$p_1$}
\psfrag{p2}{$p_2$}
\psfrag{p3}{$p_3$}
\psfrag{p4}{$p_4$}
\psfrag{p5}{$p_5$}
\psfrag{b}{$\beta$}
\psfrag{g}{$\gamma$}
\includegraphics[width=\textwidth]{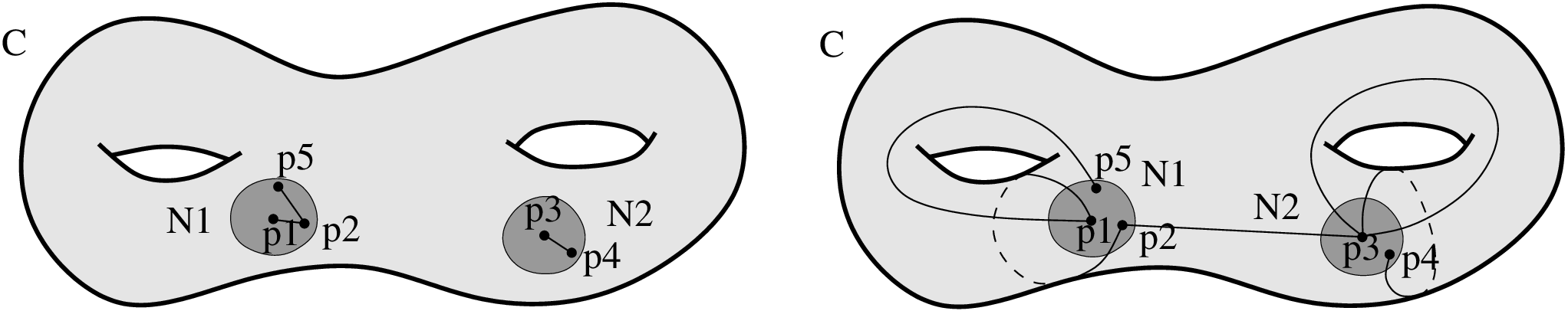}
\caption{{\small Example of inner (on the left) and outer (on the right) bases.}}\label{fig:bases}
\end{figurehere}
\end{center}
\medskip

\begin{remark}
The vector spaces $\Hbb_1(\ul{\RR}_{\disk_\sigma(\ph),\vc{p}})$ and $H_1(C,\disk_\sigma(\ph);\RR)$ determine flat $\RR$-vector bundles $\Hloc_1(\ul{\RR}_{\disk_\sigma,\vc{P}})$ and $\Hloc_1(\ul{\RR}_{\C,\disk_\sigma})$
on $V_\sigma$ and on $\wti{V}_\sigma$. 
Thus, the collections $\Bcal_\sigma^{inn}$ of all $\sigma$-inner bases and
$\Bcal_\sigma^{out}$ of all $\sigma$-outer bases determine locally constant sheaves of sets on $\wti{V}_\sigma$ and on $V_\sigma$, which will be denoted by the same symbols.
We also denote by $\Bcal_\sigma^{inn}(\ph)$ and $\Bcal_\sigma^{out}(\ph)$ their stalks at $(C,\vc{p},\ph)\in \wti{V}_\sigma$ (and at $(C,\vc{p},[\ph])\in V_\sigma$).
\end{remark}

Let $B^{inn}$ be a $\sigma$-inner basis and $B^{out}$ be a $\sigma$-outer basis
for $(C,\vc{p},\ph)\in \wti{V}_\sigma$ and let $U$ be a contractible neighbourhood
of $(C,\vc{p},\ph)$ in $\wti{V}_\sigma$ on which the universal family and the above 
locally constant sheaves $\Bcal_\sigma^{out}$ and $\Bcal_\sigma^{inn}$ can thus be trivialized.
%Then $B^{inn}$ (resp. $B^{out}$) still identify $\sigma$-inner (resp. $\sigma$-outer) basis for all $(C_u,\vc{p}_u,\ph_u)$ in $U$ and 
Then we can define the local functions
$|\Pcal_{B^{inn}}|^2,\ell^{-2}_{B^{out}}:U\rar\RR$  as
\[
|\Pcal_{B^{inn}}|^2(\ph):=\sum_{\beta\in B^{inn}}|\Pcal_\beta(\ph)|^2,
\qquad\qquad
\ell^{-2}_{B^{out}}(\ph):=\sum_{\gamma\in B^{out}}\ell^{-2}_\gamma(\ph)
\]
%\begin{notation}
%For every $(C,\vc{p},\ph)\in \wti{V}_\sigma$, we let
%$\Bcal^{inn}_\sigma(\ph)$ denote the set of $\sigma$-inner bases
%and
%$\Bcal^{out}_\sigma(\ph)$ the set of
%$\sigma$-outer bases for $\ph$.
%\end{notation}

As in Section \ref{sec:geometric}, taking suprema over all
$B^{inn}\in\Bcal^{inn}_\sigma(\ph)$
and
$B^{out}\in\Bcal^{out}_\sigma(\ph)$
determines global functions
\[
|\Pcal_{\Bcal_\sigma^{inn}}|^2=\sup_{B^{inn}}|\Pcal_{B^{inn}}|^2,
\qquad\qquad
\ell^{-2}_{\Bcal_\sigma^{out}}=\sup_{B^{out}}\ell^{-2}_{B^{out}}
\]
on $\wti{V}_\sigma$. 
%
%\begin{remark}
%As in Lemma \ref{lemma:ell-properties}, the value of $\ell^{-2}_{\Bcal_\sigma^{out}}$ is always attained at finitely many outer bases and the function is indeed locally
%a maximum of finitely many functions $\ell^{-2}_{B^{out}_i}$.
%The situation with $|\Pcal_{\Bcal_\sigma^{inn}}|^2$ is even easier, since $\Bcal^{inn}_\sigma(\ph)$ is a finite set.
%\end{remark}
%
%%
%On the other hand, consider the arcs in $(C,P)$ that
%cannot be realized inside $\Delta_{\sigma,\ph}$. A collection of such arcs
%will be called a {\bf{$\bm{\sigma}$-relative basis}} if
%their classes form a basis of $H_1(C,\Delta_{\sigma,\ph};\RR)$. We will denote
%by $\Bcal'_{\sigma,\ph}$ the set of $\sigma$-relative bases.
%
%
%\begin{remark}
%For a point $(C,\vc{p},[\ph])\in V_\sigma$,
%short segments $\gamma$ contained inside $\Delta_{\sigma,[\ph]}$ disk satisfy $\ell_{\gamma}(\ph)\leq c \cdot\ell_{\gamma'}(\ph)$,
%for every long segment $\gamma'$ not contained in
%$\Delta_{\sigma,[\ph]}$.
%Since $c<\frac{1}{2g+n}$,
%a basis $B$ of $H_1(C,P;\RR)$ made of segments that
%maximizes the quantity $\sum_{\gamma\in B}\frac{A}{\ell^2_\gamma}$ at $[\ph]$
%must be of the type $B=B^\Delta\cup B^{rel}$,
%with $B^\Delta\in\Bcal^\Delta_{\sigma,[\ph]}$ and $B^{rel}\in\Bcal^{rel}_{\sigma,[\ph]}$.
%\end{remark}
%
The above functions
can be combined to define
$\eta_\sigma,\zeta_\sigma:V_\sigma\rar\RR$ as
\[
\begin{array}{rlcl}
\dis\eta_\sigma & :=
A\cdot \ell^{-2}_{\Bcal_\sigma^{out}}=
\dis\sup_{B^{out}} %\in\Bcal^{rel}
 \eta_{\sigma,B^{out}}
&\phantom{XXXX}
&
\dis
\eta_{\sigma,B^{out}}=A\cdot\ell_{B^{out}}^{-2}\\
\dis\zeta_\sigma 
& 
\dis
:=
|\Pcal_{\Bcal_\sigma^{inn}}|^2\cdot\ell^{-2}_{\Bcal_\sigma^{out}}=
\sup_{B=B^{inn}\cup B^{out}}
%\substack{B=B^\Delta\cup B^{rel}\\
%B^{rel}\in\Bcal^{rel}\\
%B^\Delta\in \Bcal^\Delta}}
\zeta_{\sigma,B}
&&
\dis
\zeta_{\sigma,B}=|\Pcal_{B^{inn}}|^2\cdot \ell^{-2}_{B^{out}}
\end{array}
\]
%meaning that $B^{inn}$ ranges in the set $\Bcal^{inn}_\sigma(\ph)$ of $\sigma$-inner bases
%and
%$B^{out}$ ranges in the set
%$\Bcal^{out}_\sigma(\ph)$
%of $\sigma$-outer bases for $\ph$, whenever
%we are evaluating the above functions at $(C,\vc{p},[\ph])$.
%
%\begin{remark}\label{rmk:same-Brel}
%As in Lemma \ref{lemma:ell-properties}, it is easy to see that
%the sup over $\Bcal^{out}_\sigma(\ph)$ is always attained at bases of $H_1(C,\disk_\sigma(\ph);\RR)$ made of $\ph$-segments. Moreover, notice that
%\[
%\zeta_\sigma=\frac{|\Pcal_{\Bcal_\sigma^{inn}}|^2}{A}\,\eta_\sigma
%\]
%%
%%\zeta_{\sigma,B}
%%\left(
%%\sum_{\beta\in B^\Delta} \frac{|\Pcal_\beta|^2}{A}
%%\right)
%%\left(
%%\sum_{\gamma\in B^{rel}} \frac{A}{\ell_\gamma^2}
%%\right)=
%%\left(
%%\sum_{\beta\in B^\Delta} \frac{|\Pcal_\beta|^2}{A}
%%\right)
%%\eta_{\sigma,B^{rel}}
%%\]
%So, if the value of $\zeta_\sigma$ is attained at $B=B^{inn}\cup B^{out}$, then
%the value of $\eta_\sigma$ is attained at $B^{out}$.
%% maximize $\eta_{\sigma,B^{out}}$.
%\end{remark}

Here we collect a few simple properties of the above functions.

\begin{lemma}\label{lemma:relative}
Let $(C,\vc{p},\ph)\in \wti{V}_\sigma$. Then
\begin{itemize}
\item[(a)]
there exists a contractible neighbourhood $U'\subset\wti{V}_\sigma$ of
$(C,\vc{p},\ph)$ and finitely many outer bases $\{B^{out}_j\}$ for $\ph$
such that for every $(C',\vc{p}',\ph')\in U'$ the following holds:
$\ell^{-2}_{\Bcal_\sigma^{out}}(\ph')$
is attained at some $B^{out}_j$
and all the outer bases $\{B^{out}_j\}$ are realized on $(C',\vc{p}')$ by
proper $\ph'$-segments;
\item[(b)]
the function $\ell^{-2}_{\Bcal_\sigma^{out}}$ is continuous and its
restriction to the locally closed stratum $\Omega\M'_{g,n}(\sigma)$
agrees with the
$\ell^{-2}_{\Bcal}$ defined on $\Omega\M'_{g,n}(\vc{m})$
with $\vc{m}=\sigma_*\vc{\mo}$
in Section \ref{sec:length};
\item[(c)]
$\Bcal^{inn}_\sigma(\ph)$ is a finite set;
\item[(d)]
if the value of $\zeta_\sigma$ is attained at $B=B^{inn}\cup B^{out}$, then
the value of $\eta_\sigma$ is attained at $B^{out}$;
\end{itemize}
\end{lemma}
\begin{proof}
Part (a) is similar as in Lemma \ref{lemma:ell-properties},
replacing bases by outer bases. This easily implies that
$\ell^{-2}_{\Bcal_\sigma^{out}}$ is continuous.
The remaining part of (b) is clear, since the definitions of the two
functions essentially coincide on the locally closed stratum.

Part (c) is immediate, since there are finitely many marked points,
and part (d) easily follows after rewriting $\zeta_\sigma$ as
$\left(|\Pcal_{\Bcal_\sigma^{inn}}|^2/A\right)\eta_\sigma$.
\end{proof}

The following technical result clarifies how to combine
$\eta_\sigma$ and $\zeta_\sigma$ to obtain a function $\exh_\sigma$
that satisfies condition ($\star$).
%the open subset $W_{\sigma}$ of $V_\sigma$.

\begin{lemma}\label{lemma:chi}
Let $\a>0$ and let $\chi:[0,\a)\rar\RR_+$ be smooth function with $\chi(0)=1$ such that $\chi'>0$ and $(\log\chi)''>0$.
Then the function $\exh_\sigma:W_\sigma\rar\RR$ defined as
\[
\exh_\sigma:=\log(\eta_\sigma+\chi\circ\zeta_\sigma)
\]
satisfies condition $(\star)$, where $W_\sigma=\{\zeta_\sigma<\a\}\subseteq V_\sigma$.
\end{lemma}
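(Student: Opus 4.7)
The approach is to apply Lemma~\ref{lemma:star-properties}(f) with $\eta_1=\eta_\sigma$ and $\eta_2=\chi\circ\zeta_\sigma$. This requires (a) that $\log\eta_\sigma$ and $\log(\chi\circ\zeta_\sigma)$ each weakly satisfy condition $(\star)$, and (b) that for every nonzero $\dot u\in T_u U_\sigma$ with $d\Pcal(\dot u)\in\Pi'_\ph$, one of the two log-Hessians is strictly positive along $\dot u$. By the argument of Lemma~\ref{lemma:ell-properties}(b) adapted to $\sigma$-relative bases and to combined bases $B^\Delta\cup B'$, both $\eta_\sigma$ and $\zeta_\sigma$ are locally the maximum of finitely many real-analytic functions $\eta_{\sigma,B'}$ and $\zeta_{\sigma,B}$ indexed by bases realized by proper segments; since (strict) plurisubharmonicity of a finite max is inherited from that of its pieces, I can reduce every claim to the smooth piecewise setting and thereby sidestep the $C^2$ hypothesis in (f).

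For the weak $(\star)$ of $\log\eta_\sigma$, write $\log\eta_{\sigma,B'}=\log A+\log\sum_{\gamma\in B'}|\Pcal_\gamma|^{-2}$. The first summand weakly satisfies $(\star)$ by Lemma~\ref{lemma:A}(d), while the second is globally plurisubharmonic since each $|\Pcal_\gamma|^{-2}$ is log-pluriharmonic (as $\Pcal_\gamma$ is a nonvanishing holomorphic function on the locus where $\gamma$ is a proper segment) and sums of log-plurisubharmonic functions are log-plurisubharmonic; Lemma~\ref{lemma:star-properties}(d) then gives the conclusion. For $\log(\chi\circ\zeta_\sigma)$ the key input is the direct computation
\[
i\pa\ol{\pa}\log(\chi\circ\zeta_\sigma)=(\log\chi)''(\zeta_\sigma)\cdot i\pa\zeta_\sigma\wedge\ol{\pa}\zeta_\sigma+(\log\chi)'(\zeta_\sigma)\cdot i\pa\ol{\pa}\zeta_\sigma,
\]
whose two summands are nonnegative by the hypotheses on $\chi$. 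Writing $\zeta_{\sigma,B}=\sum_{\beta\in B^\Delta,\,\gamma\in B'}|\Pcal_\beta|^2/|\Pcal_\gamma|^2$ as a sum of log-pluriharmonic summands yields
\[
i\pa\ol{\pa}\zeta_{\sigma,B}(\dot u,\ol{\dot u})=\sum_{\beta,\gamma}\frac{|\Pcal_\beta|^2}{|\Pcal_\gamma|^2}\left|\frac{d\Pcal_\beta(\dot u)}{\Pcal_\beta}-\frac{d\Pcal_\gamma(\dot u)}{\Pcal_\gamma}\right|^2,
\]
which is nonnegative and vanishes iff the logarithmic-period ratios $d\Pcal_\alpha(\dot u)/\Pcal_\alpha$ coincide for all $\alpha\in B^\Delta\cup B'$. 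Since $\{\Pcal_\alpha\}_{\alpha\in B^\Delta\cup B'}$ gives linear coordinates on $H^1(C,P;\CC)$, this forces $d\Pcal(\dot u)$ to lie on the rescaling line $\CC\cdot(\ph)$. Hence $\zeta_\sigma$ is strictly plurisubharmonic on every direction outside $\CC\cdot(\ph)$, and in particular on all of $d\Pcal^{-1}(\Pi'_\ph)$; via the second term of the displayed formula, $\log(\chi\circ\zeta_\sigma)$ even satisfies the strict version of $(\star)$.

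Combining via Lemma~\ref{lemma:star-properties}(f), weak $(\star)$ of $\exh_\sigma$ follows from the weak $(\star)$ of both log-factors, and the strictness hypothesis of (f) is automatically met because $\log(\chi\circ\zeta_\sigma)$ already gives a strictly positive contribution on every $\dot u$ with $d\Pcal(\dot u)\in\Pi'_\ph$. Hence $\exh_\sigma$ satisfies $(\star)$. The principal subtlety is the non-smoothness of $\eta_\sigma$ and $\zeta_\sigma$ where the maximum-realizing basis changes; at such a point $u_0$ one fixes maximizing $B_1'$ and $B_2=B_2'\cup B_2^\Delta$ and compares $\log(\eta_\sigma+\chi\circ\zeta_\sigma)$ from below with $\log(\eta_{\sigma,B_1'}+\chi(\zeta_{\sigma,B_2}))$ (equal at $u_0$, smaller nearby), transferring the strict $(\star)$ proved by (f) for the smooth lower bound up to $\exh_\sigma$.
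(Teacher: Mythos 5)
Your overall strategy agrees with the paper's: reduce to a fixed basis $B=B'\cup B^{\Delta}$ realized by proper segments, compute the complex Hessian of each log-summand, and combine them via Lemma~\ref{lemma:star-properties}(f). The formula $i\pa\ol\pa\log(\chi\circ\zeta_\sigma)=(\log\chi)''(\zeta_\sigma)\,i\pa\zeta_\sigma\wedge\ol\pa\zeta_\sigma+(\log\chi)'(\zeta_\sigma)\,i\pa\ol\pa\zeta_\sigma$, the expansion of $i\pa\ol\pa\zeta_{\sigma,B}$ as a sum of nonnegative terms $\bigl|\frac{d\Pcal_\beta(\dot u)}{\Pcal_\beta}-\frac{d\Pcal_\gamma(\dot u)}{\Pcal_\gamma}\bigr|^2$, and the log-psh bookkeeping for $\eta_{\sigma,B'}$ are all correct.

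However, the way you allocate the strictness between the two factors has a genuine gap. You claim that $\log(\chi\circ\zeta_\sigma)$ \emph{by itself} satisfies strict $(\star)$, i.e.\ that $\zeta_\sigma$ is strictly plurisubharmonic in every direction $\dot u$ with $d\Pcal(\dot u)\in\Pi'_\ph$. This fails precisely when $\sigma$ has depth $0$: then each $\sigma^{-1}(j)$ is a singleton, so $H_1(\Delta_{\sigma,\ph},P;\RR)=0$, every $B^{\Delta}\in\Bcal^{\Delta}$ is the empty collection, and $\zeta_\sigma\equiv 0$. Consequently $\chi\circ\zeta_\sigma\equiv\chi(0)=1$ and $\log(\chi\circ\zeta_\sigma)\equiv 0$, whose Hessian is identically zero, not strictly positive. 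In this case the strict-positivity hypothesis of Lemma~\ref{lemma:star-properties}(f) would have to be supplied by $\log\eta_\sigma$, but you only established that $\log\eta_\sigma$ weakly satisfies $(\star)$. (Your formula for $i\pa\ol\pa\zeta_{\sigma,B}$ is a sum indexed by pairs $(\beta,\gamma)\in B^{\Delta}\times B'$; when $B^{\Delta}=\emptyset$ that sum is empty, so the ``ratios coincide, hence $\dot u\in\CC(\ph)$'' deduction collapses before it starts.)

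The paper avoids this pitfall by never claiming strictness for one factor in isolation. Instead it argues by contradiction: it supposes both $i\pa\ol\pa\log\eta_{\sigma,B'}(\dot\ph,\dot\ph)=0$ and $i\pa\ol\pa\log(\chi\circ\zeta_{\sigma,B})(\dot\ph,\dot\ph)=0$ simultaneously, extracts from the first that $\int_\gamma\dot\ph=c\int_\gamma\ph$ for all $\gamma\in B'$, propagates the same $c$ to $B^{\Delta}$ via the second, and then invokes that $B^{\Delta}\cup B'$ is a basis of $H_1(C,P;\RR)$. When $B^{\Delta}=\emptyset$ the propagation step is vacuous but harmless, because $B'$ alone is already a basis and the first condition suffices. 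To repair your argument you would need to observe, in the depth-$0$ case, that vanishing of $i\pa\ol\pa\log\bigl(\sum_{\gamma\in B'}|\Pcal_\gamma|^{-2}\bigr)$ (one of the two nonnegative summands making up $\log\eta_{\sigma,B'}$) already forces all ratios $d\Pcal_\gamma(\dot u)/\Pcal_\gamma$ to coincide, hence $d\Pcal(\dot u)\in\CC(\ph)$; that is, in this degenerate case $\log\eta_\sigma$ in fact satisfies strict $(\star)$, a fact you did not record or use.

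A minor remark: you let the maximizing $B_1'$ (for $\eta_\sigma$) and $B_2=B_2'\cup B_2^{\Delta}$ (for $\zeta_\sigma$) be different. This is an unnecessary complication: since the $B'$-dependent factor of $\zeta_{\sigma,B}$ is exactly $\sum_{\gamma\in B'}\ell_\gamma^{-2}$, any $B'$ maximizing $\eta_{\sigma,B'}$ also maximizes that factor, so one may take $B_1'=B_2'$, as the paper implicitly does. This matters for the final contradiction, since the ratios you constrain come from $B_1'$ while the basis you want to span with is $B_2^{\Delta}\cup B_2'$.
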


Clearly, for every $B=B^{inn}\cup B^{out}$ with
$B^{inn}\in\Bcal^{inn}_\sigma(\ph)$ and
$B^{out}\in \Bcal^{out}_\sigma(\ph)$,
we can locally
define $\exh_{\sigma,B}:=\log(\eta_{\sigma,B^{out}}+
\chi\circ\zeta_{\sigma,B})$. 
By Lemma \ref{lemma:relative}, we deduce that
$\exh_\sigma=\sup_{B} \exh_{\sigma,B}$.

\begin{proof}
Let  $(C,\vc{p},\ph)\in \wti{V}_\sigma$.
As taking the sup
can only
improve subharmonicity, 
in order to check condition $(\star)$ at $(C,\vc{p},\ph)$
it is enough to work with
a fixed basis $B=B^{inn}\cup B^{out}$ such
that $\eta_\sigma(\ph)=\eta_{\sigma,B^{out}}(\ph)$ and
$\zeta_\sigma(\ph)=\zeta_{\sigma,B}(\ph)$.
Because arcs in $B^{out}$ are realized by proper $\ph$-segments
(see Lemma \ref{lemma:relative}), the functions
$\eta_{\sigma,B^{out}}$ and $\zeta_{\sigma,B}$ are
smooth near $(C,\vc{p},\ph)$. Hence, it is sufficient to compute the complex Hessian of $\exh_{\sigma,B}$.

As usual, we can work in period coordinates 
in a small open neighbourhood of $(C,\vc{p},\ph)$
by Proposition
\ref{prop:period-closed-hol}.
Let $\Pi_\ph\subset \Hbb^1(\ul{\CC}_{C,\vc{p}})$ be the kernel of the projection onto $H^{0,1}_\ph(C)$ and let $\Pi'_\ph\subset\Pi_\ph$
be a complement of the line $\CC(\ph)$.
The conclusion will follow if we prove that the restriction of $i\pa\ol{\pa}\log(\exh_{\sigma,B})_\ph$ to $\Pi'_\ph$ is positive-definite.
 
Consider a deformation $(\ph_\e)=(\ph+\e\dot{\ph})$ for a small $\e\in\CC$,
with $0\neq (\dot{\ph})\in\Pi'_\ph$.
We want to show that both $i\pa\ol{\pa}\log(\eta_{\sigma,B^{out}})_\ph(\dot{\ph},\dot{\ph})\geq 0$ and $i\pa\ol{\pa}\log(\chi\circ\zeta_{\sigma,B})_\ph(\dot{\ph},\dot{\ph})\geq 0$
and in fact at least one of the two is strictly positive.
By Lemma \ref{lemma:star-properties}(e), we will then be able to conclude that $i\pa\ol{\pa}\log(\exh_{\sigma,B})_\ph(\dot{\ph},\dot{\ph})>0$.

A computation analogous to the ones in Lemma \ref{lemma:A}(c)
and in Lemma \ref{lemma:ell-properties2}(c)
shows that
\mbox{$i\pa\ol{\pa}\log(\eta_{\sigma,B^{out}})_\ph(\dot{\ph},\dot{\ph})\geq 0$}.
On the other hand,
\[
%i\pa\ol{\pa}\log(\chi\circ\zeta_{\sigma,B})=i
%\frac{(\chi\circ\zeta_{\sigma,B})(\chi'\circ\zeta_{\sigma,B}) \pa\ol{\pa}\zeta_{\sigma,B}+
%\left[
%(\chi\circ\zeta_{\sigma,B})(\chi''\circ\zeta_{\sigma,B})
%-(\chi'\circ\zeta_{\sigma,B})^2
%\right]\pa\zeta_{\sigma,B}\wedge\ol{\pa}\zeta_{\sigma,B}
%}{(\chi\circ\zeta_{\sigma,B})^2}
i\pa\ol{\pa}\log(\chi\circ\zeta_{\sigma,B})=i
\frac{[(\chi\chi')\circ\zeta_{\sigma,B}] \pa\ol{\pa}\zeta_{\sigma,B}+
\left[
(\chi\chi''-(\chi')^2)\circ\zeta_{\sigma,B})
\right]\pa\zeta_{\sigma,B}\wedge\ol{\pa}\zeta_{\sigma,B}
}{(\chi\circ\zeta_{\sigma,B})^2}
\]
As $\chi,\chi'>0$ and $\chi\chi''-(\chi')^2=(\log\chi)''>0$,
we certainly have \mbox{$i\pa\ol{\pa}\log(\chi\circ\zeta_{\sigma,B})_\ph
(\dot{\ph},\dot{\ph})\geq 0$}.

Suppose now by contradiction that both the following conditions are satisfied:
\begin{itemize}
\item[(a)]
\mbox{$i\pa\ol{\pa}\log(\eta_{\sigma,B^{out}})_\ph(\dot{\ph},\dot{\ph})=0$}
\item[(b)]
%and 
\mbox{$i\pa\ol{\pa}\log(\chi\circ\zeta_{\sigma,B})_\ph
(\dot{\ph},\dot{\ph})=0$}.
\end{itemize}
%are satisfied.
%
Condition (a) necessarily implies that
$i\pa\ol{\pa}\log(A)_\ph(\dot{\ph},\dot{\ph})=0$ and $i\pa\ol{\pa}\log
(\ell_{B^{out}}^{-2})_\ph(\dot{\ph},\dot{\ph})=0$. The first equality says that
$0=(\dot{\ph})\in H^1(C;\CC)$ and so
$\e \mapsto A(\ph_\e)$ must necessarily be constant;
the second equality says that
there must be a constant $\lambda\in\CC$
such that $\int_\gamma \dot{\ph}=\lambda\int_\gamma \ph$ for all
$\gamma\in B^{out}$. As a consequence,
$\eta_{\sigma,B^{out}}([\ph_\e])=
\eta_{\sigma,B^{out}}([\ph])\cdot |1+\e \lambda|^{-2}$ and so
\[
\zeta_{\sigma,B}([\ph_\e])=
\frac{\eta_{\sigma,B^{out}}([\ph])}{A(\ph)}\,
\frac{|\Pcal_{B^{inn}}(\ph_\e)|^2}{|1+\e \lambda|^2}
\]
Condition (b) implies that
$(i\pa\ol{\pa}\zeta_{\sigma,B})_\ph
(\dot{\ph},\dot{\ph})=0$ and so
\[
\sum_{\beta\in B^{inn}}i\pa_\e\ol{\pa}_{\ol{\e}}\left|\frac{\Pcal_\beta(\ph_\e)}{1+\e\lambda} \right|^2=0.
\]
Since $\e\mapsto (1+\e\lambda)^{-1}\Pcal_\beta(\ph_\e)$ is holomorphic,
each summand must be constant in $\e$, namely
$\Pcal_\beta(\ph_\e)=(1+\lambda\e)\Pcal_\beta(\ph)$,
or equivalently
$\int_\beta \dot{\ph}=\lambda\int_\beta \ph$,
for all $\beta\in B^{inn}$.

Being $B=B^{inn}\cup B^{out}$ a set of generators for $H_1(C,\vc{p};\RR)$,
we conclude that $(\dot{\ph})=\lambda(\ph)$, thus reaching
the desired contradiction.
\end{proof}

Given an open subset $W_\sigma=\{\zeta_\sigma<\a\}\subseteq V_\sigma$ as in the previous lemma,
the non-negative $\exh_\sigma:W_\sigma\rar \RR$ will not necessarily be proper: it will depend on
the value $\a$ and on the function $\chi$.
In order to make the right choice,
we need to examine the behavior of $\eta_\sigma$ and $\zeta_\sigma$
along diverging sequences in $V_\sigma$.

\begin{lemma}[Diverging sequences in $V_\sigma$]\label{lemma:e}
Let $\left\{(C_s,\vc{p}_s,[\ph_s])\right\}_{s\in\NN}\subset V_\sigma$ be a 
diverging sequence. Up to extracting a subsequence, one of the following holds:
%sequence that converges to a point $(C,\vc{p},[\ph])$ of $\pa{V}_\sigma \cap \PP\ol{\Omega\M}'_{g,n}$.
\begin{itemize}
\item[(a)]
the sequence diverges in $\PP\Omega\M'_{g,n}$ and
$\eta_\sigma(C_s,\vc{p}_s,[\ph_s])\rar+\infty$;
%\item
%If $(C,\vc{p},[\ph])$ lies in a stratum of $\PP\ol{\Omega\M}'_{g,n}$
%deeper than $\sigma$, then
%$\exh_\sigma(C_s,\vc{p_s},[\ph_s])\rar+\infty$.
%as $s\rar\infty$.
\item[(b)]
the sequence converges to a $(C,\vc{p},[\ph])\in\pa V_\sigma$ lying in a deeper stratum
and $\eta_\sigma(C_s,\vc{p}_s,[\ph_s])\rar+\infty$;
\item[(c)]
the sequence converges to a $(C,\vc{p},[\ph])\in \pa V_\sigma$
that does not lie in a deeper stratum and
$\liminf_{s\rar\infty} \zeta_\sigma(C_s,\vc{p}_s,[\ph_s])\geq a(g,n,c)$,
where $a(g,n,c)>0$ is a constant that depends only on $g$, $n$
and the $c$ chosen in Proposition \ref{prop:good}.
\end{itemize}
\end{lemma}
\begin{proof}
In cases (a) and (b), 
the argument is the same
as in the proof of Proposition \ref{prop:function}. So we can focus on case (c).
%shows that
%$\eta_\sigma(C_s,\vc{p_s},[\ph_s])\rar +\infty$, and so does $\exh_\sigma(C_s,\vc{p_s},[\ph_s])$.

%In case (c), the function $\zeta_\sigma$ is continuous at $(C,\vc{p},[\ph])$
%and so it is enough to estimate $\zeta_\sigma(C,\vc{p},[\ph])$.
Since $(C,\vc{p},[\ph])$ belongs to $\pa V_\sigma$
but not to a stratum deeper than $\sigma$, 
(up to subsequences)
there exist two distinct indices $i,i'\in\{1,\dots,n+2g-2\}$ such that
$\sigma(i)=\sigma(i')=j$, $i'=i_j$ and $dist_{\ph_s}(p_i,p_{i'})\rar
r_\sigma(\ph_s)>0$.
%\frac{c}{2}R_\sigma(\ph)>0$.
Now, the $\ph_s$-shortest geodesic between $p_i$ and $p_{i'}$ is a concatenation of at most $2g-3+n$ proper segments.
Thus, there exists at least one such segment $\beta_s$
inside $\disk^j_\sigma(\ph_s)$ such that
\[
|\Pcal_{\beta_s}(\ph_s)|=
\ell_{\beta_s}(\ph_s)\geq 
\frac{r_\sigma(\ph_s)}{2g-3+n}=
\frac{c\cdot R_\sigma(\ph_s)}{2(2g-3+n)}
%\left(\frac{c}{2}R_\sigma(\ph)\right)
%>\frac{1}{4(2g+n)^4}
%R_\sigma(\ph)^2
\]
Notice that the minimum $\ph_s$-length of a $\ph_s$-outer segment is $2R_\sigma(\ph_s)$. 
Thus, for every outer $\ph_s$-segment $\gamma_s\subset C_s$, we would have
\[
\frac{|\Pcal_{\beta_s}(\ph_s)|^2}{\ell^2_{\gamma_s}(\ph_s)}\geq
\frac{|\Pcal_{\beta_s}(\ph_s)|^2}{(2R_\sigma(\ph_s))^2}\geq
\frac{c^2}{16(2g-3+n)^2}
%\frac{1}{16(2g+n)^4}
\]
Thus, $\liminf_{s\rar\infty} \zeta_\sigma(C_s,\vc{p}_s,[\ph_s])\geq a(g,n,c)$, where
$a(g,n,c):=
\frac{c^2}{16(2g-3+n)^2}$, which proves the claim.
\end{proof}

%\begin{notation}
Let $\a<a(g,n,c)$ be a positive constant
where $a(g,n,c)>0$ is in Lemma \ref{lemma:e}:
for instance, we can choose $\a=a(g,n,c)/2$.
%Denote by $W_\sigma$ be the open subset of $V_\sigma$ defined as

\begin{definition}\label{def:W}
Let $\displaystyle W_\sigma
:=\left\{(C,\vc{p},[\ph])\in V_\sigma\ \big|\ \zeta_\sigma(C,\vc{p},[\ph])<\a\right\}$
for every $\sigma\in\Sfrak$.
\end{definition}
%\end{notation}
%
\begin{remark}\label{rmk:good-W}
For every $\sigma\in\Sfrak$, the subset $W_\sigma \subset V_\sigma\subset \PP\Omega\M'_{g,n}$ is open and contains the stratum
$\PP\Omega\M'_{g,n}(\sigma)$, since $\PP\Omega\M'_{g,n}(\sigma)\subset V_\sigma$
and $\zeta_\sigma$ is a continuous function on $V_\sigma$ that vanishes on such stratum.
Thus $\Wfrak=\{W_\sigma\,|\,\sigma\in\Sfrak\}$ is
a refinement of $\Vfrak$ that satisfies property (AS1) and so it is a good open cover
by Lemma \ref{lemma:refinement}.
\end{remark}

As a consequence of Lemma \ref{lemma:e}, we obtain information on the behavior of
diverging sequence in the constructed thickening $W_\sigma$
of the stratum $\PP\Omega\M'_{g,n}(\sigma)$.

\begin{corollary}[Diverging sequences in $W_\sigma$]\label{cor:diverging-W}
Let $\{(C_s,\vc{p}_s,[\ph_s])\}_{s\in\NN}\subset W_\sigma$ be a diverging sequence.
Up to extracting a subsequence, one of the following holds:
\begin{itemize}
\item[(a)]
the sequence $(C_s,\vc{p}_s,[\ph_s])$ diverges in $\PP\Omega\M'_{g,n}$, and so
$\eta_\sigma(C_s,\vc{p}_s,[\ph_s])\rar +\infty$;
\item[(b)]
the sequence $(C_s,\vc{p}_s,[\ph_s])$ converges to $(C,\vc{p},[\ph])\in \pa W_\sigma$
lying in a deeper stratum, and so $\eta_\sigma(C_s,\vc{p}_s,[\ph_s])\rar+\infty$;
\item[(c)]
the sequence $(C_s,\vc{p}_s,[\ph_s])$ converges to $(C,\vc{p},[\ph])\in V_\sigma\cap \pa W_\sigma$
at which $\zeta_\sigma(C,\vc{p},[\ph])=\a$.
\end{itemize}
\end{corollary}
\begin{proof}
Assume that 
we are not in cases (a) or (b). Up to subsequences, $(C_s,\vc{p}_s,[\ph_s])$
converges
to a $(C,\vc{p},[\ph])\in \pa W_\sigma$ which does not lie in a deeper stratum.
Clearly, $(C,\vc{p},[\ph])$ cannot lie in $\pa V_\sigma$, for we would have
$\liminf_{s\rar\infty}\zeta_\sigma(C_s,\vc{p}_s,[\ph_s])\geq a(g,n,c)>\a$
by Lemma \ref{lemma:e}(c). Thus, $(C,\vc{p},[\ph])\in V_\sigma \cap \pa W_\sigma$, which implies
that $\zeta_\sigma(C,\vc{p},[\ph])=\a$.
\end{proof}

Here is the final fruit of our analysis.

\begin{corollary}\label{cor:star-W}
Consider the function $\chi:[0,\a)\rar\RR_+$ defined as $\chi(t)=\frac{\a}{\a-t}$.
Then $\exh_\sigma:=\log(\eta_\sigma+\chi\circ\zeta_\sigma):W_\sigma\rar[0,+\infty)$
is an exhaustion function on $W_\sigma$ that satisfies condition $(\star)$.
\end{corollary}
\begin{proof}
The chosen $\chi$ satisfies the hypotheses of Lemma \ref{lemma:chi}
and so such $\exh_\sigma$ satisfies condition $(\star)$.
Moreover, $\exh_\sigma$ is proper by Corollary \ref{cor:diverging-W} and so it is
an exhaustion function.
\end{proof}

Clearly, many other functions $\chi$ would work: indeed, it is enough to require
$\chi(0)=1$, $\chi'>0$, $(\log\chi)''>0$ and $\lim_{t\rar \a}\chi(t)=+\infty$.

%%%%%%%%%%%%%%%%%%%%%%

\subsection{Cohomological dimension of $\M_{g,n}$}\label{sec:final}

We wish to estimate the cohomological dimension by
invoking Corollary \ref{cor:stratification-Dolbeault}.
In order to do so,
we produce an open cover of 
the moduli space $\PP\Omega\M'_{g,n}$ 
of projective Abelian differentials with marked zeros,
whose
open sets carry exhaustion functions which are 
somehow compatible with each other.

\begin{proposition}[Good $(g+1)$-complete covers]\label{prop:star}
There exists a good open cover $\Wfrak=\{W_\sigma\}$
of the moduli space $\PP\Omega\M'_{g,n}$ of Abelian differentials
with marked zeros and exhaustions
functions $\exh_\sigma: W_\sigma\rar\RR$
that satisfy condition ($\star$) for all $\sigma\in\Sfrak$.
\end{proposition}

\begin{proof}
Consider the open subsets $W_\sigma$ as in Definition \ref{def:W}.
The collection $\Wfrak=\{W_\sigma\}$ is a good open cover
of $\PP\Omega\M'_{g,n}$ by Remark \ref{rmk:good-W}. Moreover,
Corollary \ref{cor:star-W} ensures that each $W_\sigma$
is endowed with an exhaustion function
$\exh_\sigma:W_\sigma\rar\RR$ 
that satisfies condition $(\star)$.
\end{proof}

This immediately leads to
the desired conclusions.

\begin{proof}[Proof of Theorem \ref{mthm:closed}]
Let $\sigma_\bullet$ be a simplex in the barycentric subdivision $\Sfrak'_\tau$ of
$\Sfrak_\tau$ (see Section \ref{subsection:coverings}).

A point $(C,\vc{p},[\ph])\in W_{\sigma_\bullet}\cap \PP\Omega\M'_{g,n}(\ol{\tau})$
represents a surface with at most $n+k$ distinct zeros.
Since $p_1,\dots,p_n$ must be distinct, it follows that $\dim(\sigma_\bullet)\leq k-1+\eps_n$ and so $\dim(\Sfrak')\leq k-1+\eps_n$.
(Indeed, it can be shown that $\dim(\Sfrak')=k-1+\eps_n$ but we will not need it.)

Now, the function
\[
\exh_{\sigma_\bullet}: W_{\sigma_\bullet}\rar\RR
\]
defined as $\exh_{\sigma_\bullet}:=\exh_{\sigma_0}+\dots+\exh_{\sigma_d}$
is clearly an exhaustion function.
Since all $\exh_{\sigma_i}$ satisfy
condition ($\star$) due to Proposition \ref{prop:star},
so does their sum $\exh_{\sigma_\bullet}$ by
Lemma \ref{lemma:star-properties}(d). Thus, $\exh_{\sigma_\bullet}$ is $(g+1)$-convex.
Hence, Corollary \ref{cor:stratification-Dolbeault} applied to $\Wfrak$
implies that
$\cohdim_{Dol}(\PP\Omega\M'_{g,n})\leq (k-1+\eps_n)+g$.
\end{proof}

\begin{proof}[Proof of Theorem \ref{mthm:hodge}]
By Lemma \ref{lemma:fibr}(a) applied to the finite map
$\PP\Omega\M'_{g,n}\rar \PP\Omega\M_{g,n}$
that forgets the last $2g-2$ markings,
we know that $\PP\Omega\M_{g,n}$ and $\PP\Omega\M'_{g,n}$
have the same $\cohdim_{Dol}$.

Hence, Theorem \ref{mthm:hodge} reduces to a special case of Theorem \ref{mthm:closed},
since $\PP\Omega\M'_{g,n}(\sigma)$ is dense in $\PP\Omega\M'_{g,n}$
for $k=2g-2$ and $\sigma=id$.
\end{proof}

Finally, we have achieved our main result.

\begin{proof}[Proof of Theorem \ref{mthm:cohdim}]
Consider the projective bundle $\PP\Omega\M_{g,n}\rar \M_{g,n}$
with $(g-1)$-dimensional fibers
that forgets the projective Abelian differential.
By Corollary \ref{cor:fibration}, we have that
\[
\cohdim_{Dol}(\PP\Omega\M_{g,n})=(g-1)+\cohdim_{Dol}(\M_{g,n})\, .
\]
Since $\cohdim_{Dol}(\PP\Omega\M_{g,n})\leq (2g-3+\eps_n)+g$
by Theorem \ref{mthm:hodge},
we conclude that $\cohdim_{Dol}(\M_{g,n})\leq (g-2+\eps_n)+g$.
\end{proof}

\appendix

%%%%%%%%%%%%%%%%%%%%%%%%%%%%%%%%%%%%%%%
\section{Cohomological dimension}\label{sec:appendix}
%%%%%%%%%%%%%%%%%%%%%%%%%%%%%%%%%%%%%%%

\subsection{De Rham cohomology of orbifolds}\label{subsection:orbifold}

Let $X$ be a smooth orbifold.
Locally $X$ looks like $[\wti{U}/G]$, where $\wti{U}$ is an open subset of
a Euclidean space and $G$ is a finite group acting on $\wti{U}$.
In particular, change of charts
$\wti{U}_i \leftarrow \wti{U}_{ij}\rar \wti{U}_j$
are \'etale and so local diffeomorphisms.

By definition, smooth functions on $[\wti{U}/G]$ are $G$-invariant smooth functions on $\wti{U}$; analogously, one can define the sheaves $\mathcal{A}^q_X$ of smooth
differential $q$-forms on $X$.

The correspondence between locally free
$\mathcal{A}^0_X$-modules and smooth vector bundles on $X$ carries on, and
affine connections are defined in the usual way.

Thus, one can speak of de Rham cohomology
$H^*_{dR}(X;\mathbb{L})$ of the orbifold $X$ with coefficients in
a flat vector bundle $\mathbb{L}$ as the cohomology of the complex
\[
0\rar
(\mathcal{A}^0_X\otimes \mathbb{L})(X)
\arr{d}{\lra}
(\mathcal{A}^1_X\otimes\mathbb{L})(X)
\arr{d}{\lra}
(\mathcal{A}^2_X\otimes\mathbb{L})(X)
\arr{d}{\lra} \dots
\]
\begin{notation}
We will constantly identify a flat vector bundle and its associated local system
(that is, the sheaf of its parallel sections) and indeed we will denote them by
the same symbol.
\end{notation}
%\footnotetext{By abuse of notation, we will use the same symbol and indeed
%identify
%a flat vector bundle and the associated local system, i.e. the sheaf
%of its parallel sections.}

The case of an orbifold $X=[\wti{X}/G]$ which is a global quotient
of a manifold $\wti{X}$ by a finite group $G$ is rather special and easier to
deal with.
Indeed, if $\rho:\wti{X}\rar X$ is the quotient map and $\mathbb{L}\rar X$
is a flat vector bundle, 
then $\rho^*\mathbb{L}\rar\wti{X}$ is
a $G$-equivariant flat vector bundle
and
$H^q_{dR}(X;\mathbb{L})=
H^q_{dR}(\wti{X};\rho^*\mathbb{L})^{G}$.
Moreover, for any flat vector bundle $\wti{\mathbb{L}}\rar\wti{X}$,
the push-forward $\rho_*\wti{\mathbb{L}}\rar X$ is also a flat vector bundle
because $\rho$ is finite \'etale (in the orbifold sense) and surjective; %and so
%$\rho_*\wti{\mathbb{L}}\rar X$ is a flat vector bundle and
moreover,
$H^q_{dR}(\wti{X};\wti{\mathbb{L}})=H^q_{dR}(X;\rho_*\wti{\mathbb{L}})$.
Hence, de Rham cohomology theories of $X$ and $\wti{X}$ with coefficients
in flat vector bundles somehow carry the same information.

\begin{definition}
The {\it{de Rham cohomological dimension
$\cohdim_{dR}(X)$ of $X$}} is
the maximum integer $s\geq 0$ such that $H^s_{dR}(X;\mathbb{L})\neq 0$
for some flat $\CC$-vector bundle $\mathbb{L}$ on $X$.
%
%\[
%\cohdim_{dR}(X):=\mathrm{max}\{s\in\NN\ |\ H^s_{dR}(X;\mathbb{L})\neq0
%\ \text{for some $\CC$-local system $\mathbb{L}$ on $X$}\}\, ,
%\]
\end{definition}

Obviously, $\cohdim_{dR}(X)\leq \dim_{\RR}(X)$ and the equality holds if and only if $X$ has a compact top-dimensional component (for instance, by Poincar\'e duality).
Here we collect two more properties of $\cohdim_{dR}$ without proof.

\begin{lemma}
Let $\pi:Y\rar X$ be a smooth map between connected orbifolds.
\begin{itemize}
\item[(a)]
If $\pi$ is a proper submersion with fibers of dimension $r$, then
$\cohdim_{dR}(Y)=\cohdim_{dR}(X)+r$.
\item[(b)]
If $\pi$ is the inclusion of a closed retract, then $\cohdim_{dR}(Y)\leq\cohdim_{dR}(X)$.
Moreover, equality holds if $\pi$ is the inclusion of a deformation retract.
\end{itemize}
\end{lemma}

%%%%%%%%%%%%%%%%%%%%%%%%%%%%%%%%%%%%%

\subsection{Dolbeault cohomology of orbifolds}\label{subsection:Dolbeault}

Let now $X$ be a complex-analytic orbifold.
Most considerations in the previous section
can be transported to the complex-analytic
world:
indeed, it makes sense
to speak of the sheaf $\mathcal{O}_X$ of holomorphic functions,
of the sheaf $\mathcal{A}^{p,q}_X$ of smooth differential $(p,q)$-forms and of 
%locally free $\mathcal{O}_X$-modules, namely sheaves of sections of
holomorphic vector bundles on $X$. Moreover,
Dolbeault cohomology groups $H^{0,q}_{\ol{\pa}}(X;E)$ with
coefficients in the holomorphic vector bundle $E$ over $X$ 
are defined as the $q$-th cohomology group of the complex
\[
0\rar
(\mathcal{A}^{0,0}_X\otimes E)(X)
\arr{\ol\pa}{\lra}
(\mathcal{A}^{0,1}_X\otimes E)(X)
\arr{\ol\pa}{\lra}
(\mathcal{A}^{0,2}_X\otimes E)(X)
\arr{\ol\pa}{\lra} \dots
\]

Again, if $X=[\wti{X}/G]$ is a global quotient, then
$H^{0,q}_{\ol\pa}(X;E)=
H^{0,q}_{\ol\pa}(\wti{X};\rho^*E)^{G}$,
where $\rho:\wti{X}\rar X$.
Moreover, if $\wti{E}\rar\wti{X}$ is a holomorphic vector bundle, then
$\rho_*\wti{E}\rar X$ is too and
$H^{0,q}_{\ol\pa}(\wti{X};\wti{E})=H^{0,q}_{\ol\pa}(X;\rho_*\wti{E})$.

We recall that there is a natural isomorphism between
$H^{0,q}_{\ol\pa}(X;E)$ and the cohomology group $H^q(X;\mathcal{E})$
with coefficients in the locally free $\mathcal{O}_X$-module $\mathcal{E}$ of holomorphic
sections of $E$.

\begin{remark}
The push-forward of a locally free coherent sheaf through a closed embedding is not
locally free in general, but only coherent. Moreover,
analytic coherent sheaves need not admit finite locally free resolutions
(see \cite{voisin:hodge}, Corollary A.5).
\end{remark}

As a consequence of the above remark, a definition
only based on vector bundles would make it difficult to relate
the Dolbeault cohomological dimension
of a complex manifold and that of a closed holomorphic subvariety.
The above considerations motivate the following.

\begin{definition}\label{def:Dol}
The {\it{Dolbeault cohomological dimension}} $\cohdim_{Dol}(X)$ of $X$ is
the maximum integer $q\geq 0$ such that
$H^q(X;\mathcal{F})\neq 0$ for some analytic coherent sheaf $\mathcal{F}$ on $X$.
%
%\[
%\cohdim_{Dol}(X):=\mathrm{max}\{ q\in\NN\ |\ H^{q}(X;\mathcal{F})\neq 0
%\ \text{for some  $E\rar X$}\}\, .
%\]
\end{definition}

\begin{notation}
Strictly speaking, the terminology ``Dolbeault'' is only used
for the cohomology of the complex $\big((\mathcal{A}^{0,*}\otimes E)(X),\ol{\pa}\big)$ with
$E$ a holomorphic vector bundle, whereas Definition \ref{def:Dol} involves
cohomology of analytic coherent sheaves. Though it might sound a little imprecise, we 
prefer to keep the most descriptive name ``Dolbeault cohomological dimension'' for
the invariant $\cohdim_{Dol}$.
\end{notation}

Clearly, $\cohdim_{Dol}(X)\leq \dim_\CC(X)$ and the equality is attained if and only if $X$
has a compact component of top dimension (for instance, by Serre duality).
On the opposite extreme, $X$ is Stein if and only if $\cohdim_{Dol}(X)=0$ (see \cite{serre:gaga}).

%
%We incidentally remark that 
%it is possible to define Dolbeault cohomology with
%coefficients in an analytic coherent sheaf.
%Moreover, if $i:Y\rar X$ is a closed
%holomorphic embedding, then $i_*$ pushes coherent analytic sheaves on $Y$ forward to coherent analytic sheaves on $X$ and so
%$\cohdim_{Dol}(Y)\leq \cohdim_{Dol}(X)$.

A relation between $\cohdim_{Dol}$ and $\cohdim_{dR}$ is provided by
the following result.
%
%the Dolbeault cohomological dimension
%and the {\it{de Rham cohomological dimension}}, defined as
%\[
%\cohdim_{dR}(X):=\mathrm{max}\{s\in\NN\ |\ H^s_{dR}(X;\mathbb{L})\neq0
%\ \text{for some $\CC$-local system $\mathbb{L}$ on $X$}\}\, ,
%\]
%is the following.

\begin{lemma}[Dolbeault-de Rham $\cohdim$]\label{lemma:Dol-dR}
Let $X$ be a complex-analytic orbifold. Then
\[
\cohdim_{dR}(X)\leq \cohdim_{Dol}(X)+\mathrm{dim}_{\CC}(X)\, .
\]
\end{lemma}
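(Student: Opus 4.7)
The plan is to exploit the (twisted) holomorphic de Rham resolution, exactly as hinted at by the first bullet in the list of evidence for Looijenga's conjecture. Given a $\CC$-local system $\mathbb{L}$ on $X$, set $E:=\mathbb{L}\otimes_{\CC}\mathcal{O}_X$, which is a holomorphic vector bundle of rank $\mathrm{rk}(\mathbb{L})$ equipped with a flat holomorphic connection $\nabla$ whose horizontal sections recover $\mathbb{L}$. The twisted holomorphic Poincar\'e lemma asserts that the complex of holomorphic sheaves
\[
\Omega^{\bullet}_X\otimes E \ :\ E\arr{\nabla}{\lra}\Omega^1_X\otimes E\arr{\nabla}{\lra}\cdots\arr{\nabla}{\lra}\Omega^N_X\otimes E
\]
is a resolution of $\mathbb{L}$ as a sheaf of $\CC$-vector spaces, where $N=\dim_{\CC}(X)$. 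Hence $H^{s}_{dR}(X;\mathbb{L})\cong\Hbb^{s}(X;\Omega^{\bullet}_X\otimes E)$.

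Next I would feed this resolution into the standard hypercohomology (``Hodge--to--de~Rham'') spectral sequence
\[
E_1^{p,q}=H^{q}(X;\Omega^p_X\otimes E)\ \Longrightarrow\ \Hbb^{p+q}(X;\Omega^{\bullet}_X\otimes E)\cong H^{p+q}_{dR}(X;\mathbb{L}).
\]
By Dolbeault's theorem, $H^{q}(X;\Omega^p_X\otimes E)\cong H^{0,q}_{\bar\pa}(X;\Omega^p_X\otimes E)$; since $\Omega^p_X\otimes E$ is a holomorphic vector bundle, these groups vanish as soon as $q>\cohdim_{Dol}(X)$. On the other hand $\Omega^p_X=0$ for $p>N$. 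Therefore $E_1^{p,q}=0$ whenever $p+q>N+\cohdim_{Dol}(X)$, and the same vanishing passes to $E_\infty$, giving $H^{s}_{dR}(X;\mathbb{L})=0$ for $s>N+\cohdim_{Dol}(X)$, which is the desired inequality.

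To handle the orbifold case, I would present $X$ locally (or globally, in the cases of interest) as a quotient $[\wti{X}/G]$ by a finite group, as explained in Subsection \ref{subsection:orbifold}. Both sides of the inequality are computed by taking $G$-invariants of the analogous cohomologies on $\wti{X}$: the holomorphic de Rham resolution, Dolbeault's theorem, and the hypercohomology spectral sequence are all $G$-equivariant, so taking $G$-invariants commutes with the argument above. Equivalently, one works sheaf-theoretically on the orbifold $X$ itself using $\mathcal{A}^{p,q}_X$ and $\Omega^p_X$, where the Poincar\'e and Dolbeault lemmas still hold locally, and the same spectral sequence argument goes through verbatim.

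There is no serious obstacle here: the only point requiring a little care is the existence of the holomorphic Poincar\'e lemma in the twisted and in the orbifold setting, but both are standard. The genuine content of the paper lies, of course, in bounding $\cohdim_{Dol}$; the inequality of this lemma is the formal link that makes Harer's computation produce the lower bound $\cohdim_{Dol}(\M_{g,n})\geq g-2+\eps_n$ quoted in the introduction.
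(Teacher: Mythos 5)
Your proof is correct and follows essentially the same route as the paper: resolve $\mathbb{L}$ by the (twisted) holomorphic de Rham complex and feed it into the hypercohomology spectral sequence, whose first page vanishes for $p>\dim_\CC X$ or $q>\cohdim_{Dol}(X)$. You are a bit more explicit than the paper in identifying the associated flat bundle $E=\mathbb{L}\otimes_\CC\Ocal_X$ and in spelling out the orbifold reduction, but the argument is the same.
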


\begin{proof}
Let $\mathbb{L}$ be a flat complex vector bundle on $X$. Since $\mathbb{L}$ has an atlas with
locally constant transition functions, it can be also viewed as a holomorphic vector bundle.

Consider now the holomorphic de Rham complex on $X$
\[
0\rar \Ocal_X=\Omega_X^{0,0}\arr{\pa}{\lra}\Omega_X^{1,0}\arr{\pa}{\lra}
\Omega_X^{2,0}\arr{\pa}{\lra}\dots
%\arr{\pa}{\lra}
%\Omega_X^{N,0}\rar 0
\]
which is a resolution by holomorphic vector bundles of the locally constant sheaf $\ul{\CC}_X$.
Tensoring it by $\mathbb{L}$, we obtain
a twisted version of the Hodge-Fr\"olicher spectral sequence
\[
E_1^{p,q}=H^{0,q}_{\ol{\pa}}(X;\Omega_X^{p,0}\otimes\mathbb{L})  \implies H^{p+q}_{dR}(X;\mathbb{L})
\]
and so the conclusion follows as $E_1^{p,q}=0$ unless $p\leq\mathrm{dim}_{\CC}(X)$ and $q\leq\cohdim_{Dol}(X)$.
\end{proof}

%\subsection{Fibrations}\label{subsection:fibrations}

Given a holomorphic map $\pi:Y\rar X$ of complex manifolds,
a relation between the cohomological dimensions of $Y$ and $X$
is sometimes provided by Leray spectral sequence
\[
E_2^{p,q}=H^{p}(X;R^q\pi_*\mathcal{F})\implies H^{p+q}(Y;\mathcal{F})
\]
where $\mathcal{F}$ is an analytic coherent sheaf on $Y$.

Indeed, we have the following lemma (whose hypotheses can be weakened but this is enough for our purposes).

\begin{lemma}[Fibrations]\label{lemma:fibr}
Assume $X$ and $Y$ connected. 
\begin{itemize}
\item[(a)]
If $\pi$ is finite, then $\cohdim_{Dol}(Y)\leq\cohdim_{Dol}(X)$.
Moreover, if $\pi$ is finite and surjective, then
$\cohdim_{Dol}(Y)=\cohdim_{Dol}(X)$.
\item[(b)]
If $\pi$ is submersive and with fibers of dimension $r$,
then $\cohdim_{Dol}(Y)=\cohdim_{Dol}(X)+r$.
\item[(c)]
If $\pi$ is an affine map between algebraic manifolds, then
$\cohdim_{Dol}(Y)\leq \cohdim_{Dol}(X)$.
\end{itemize}
\end{lemma}
\begin{proof}
By \cite{grauert}, if $\pi$ is proper, then
$R^q\pi_*\mathcal{F}$ is a coherent $\mathcal{O}_X$-module
for all analytic coherent sheaf $\mathcal{F}$ on $Y$ and all $q\geq 0$.

The first claim in (a)
is an immediate consequence of Leray spectral sequence above
after noticing that $R^q\pi_*\mathcal{F}=0$ for all $q>0$
and all coherent $\mathcal{O}_Y$-module $\mathcal{F}$, because $\pi$ is finite.
For the reverse inequality observe that,
if $\pi$ is finite and surjective of degree $d\geq 1$, then the trace $\mathrm{tr}_{Y/X}:\pi_*\Ocal_Y\rar\Ocal_X$ is a map of $\Ocal_X$-modules
and the composition $\Ocal_X\rar \pi_*\Ocal_Y\arr{\mathrm{tr}}{\lra}\Ocal_X$
is the multiplication by $d$ (and so an isomorphism, as we are in
characteristic $0$).
Tensoring by a coherent $\mathcal{O}_X$-module $\mathcal{G}$, we obtain
the composition
$\mathcal{G}\rar \pi_*\pi^*\mathcal{G}\rar \mathcal{G}$, which is again the multiplication
by $d$.
Hence, $H^{p}(X;\mathcal{G})\rar H^{p}(X;\pi_*\pi^*\mathcal{G})
\rar H^{p}(X;\mathcal{G})$ is also an isomorphism.
Thus, if $H^{p}(X;\mathcal{G})\neq 0$, then
$H^{p}(Y,\pi^*\mathcal{G})=H^{p}(X;\pi_*\pi^*\mathcal{G})\neq 0$, which shows that $\cohdim_{Dol}(Y)\geq \cohdim_{Dol}(X)$.

In case (b), the smoothness of $\pi$ implies
that $\mathcal{F}$ is flat over $X$, and so
$(R^q\pi_*\mathcal{F})_x\cong H^q(\pi^{-1}(x);\mathcal{F})$, which gives
$R^q\pi_*\mathcal{F}=0$ for $q>r$.
Thus, $\cohdim_{Dol}(Y)\leq r+\cohdim_{Dol}(X)$.

For the reverse inequality,
consider the invertible sheaf $\omega_{\pi}$ of vertical $(r,0)$-holomorphic forms.
It satisfies $R^q\pi_*\omega_{\pi}=0$ for $q\neq r$ and
$R^r\pi_*\omega_{\pi}\cong\Ocal_X$ by $\pi$-relative Serre duality.
Applying Leray spectral sequence to $\mathcal{F}=(\pi^*\mathcal{G})\otimes\omega_{\pi}$, and using
the projection formula $R^q\pi_*\mathcal{F}\cong \mathcal{G}\otimes R^q\pi_*\omega_\pi$, we obtain
$H^{p}(X;\mathcal{G})\cong H^{r+p}(Y;\mathcal{F})$
for every coherent $\mathcal{O}_X$-module $\mathcal{G}$, and so $\cohdim_{Dol}(Y)\geq r+\cohdim_{Dol}(X)$.

As for part (c), for every affine open subset $U\subset X$, the preimage $\pi^{-1}(U)$ is affine and so Stein. Then it is easy to see that, for every 
Stein open subset $U'\subset U$, the preimage $\pi^{-1}(U')$ is Stein too.
Hence, $H^{q}(\pi^{-1}(U');\mathcal{F})=0$ as $q>0$ for all such $U'$ and so
$R^q \pi_*\mathcal{F}=0$ for $q>0$. The conclusion follows from Leray spectral sequence.
\end{proof}

Lemma \ref{lemma:fibr}(b) has the following immediate consequence.

\begin{corollary}[Projective bundles]\label{cor:fibration}
Let $E\rar X$ be a holomorphic vector bundle of rank $r+1$ and let
$\PP E$ be its projectivization.
Then $\cohdim_{Dol}(\PP E)=r+\cohdim_{Dol}(X)$.
\end{corollary}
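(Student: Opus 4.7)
The plan is to obtain this as a direct application of Lemma~\ref{lemma:fibr}(b) to the tautological projection $\pi:\PP E\to X$. First I would verify the hypotheses of that lemma. Since $E\to X$ is a holomorphic vector bundle of rank $r+1$, the projectivization $\PP E$ is a holomorphic fiber bundle over $X$ whose fibers are all biholomorphic to $\PP^r$; in particular, $\pi$ is a holomorphic submersion with compact connected fibers of pure complex dimension $r$.

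With these hypotheses verified, Lemma~\ref{lemma:fibr}(b) applies verbatim with $Y=\PP E$ and yields
\[
\cohdim_{Dol}(\PP E)=\cohdim_{Dol}(X)+r.
\]
This is the full statement of the corollary, so no further argument is needed.

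The only point that warrants a moment's attention is that the Serre-duality step in the proof of Lemma~\ref{lemma:fibr}(b) produces the sharp shift by $r$ only when the fibers have complex dimension exactly $r$: it uses the identification $R^r\pi_*\omega_\pi\cong\Ocal_X$ and the vanishing $R^p\pi_*\omega_\pi=0$ for $p\neq r$, both valid because each fiber is a smooth projective space of dimension $r$. Since $\mathrm{rank}(E)=r+1$ forces the fiber dimension to be precisely $r$, there is no mismatch, and so there is no real obstacle to the argument: the corollary is essentially a restatement of the lemma in the specific case of projective bundles.
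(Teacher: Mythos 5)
Your proof is correct and matches the paper exactly: the paper also presents the corollary as an immediate consequence of Lemma~\ref{lemma:fibr}(b) applied to the projection $\PP E \to X$, a holomorphic submersion with compact connected $\PP^r$-fibers. Your remark about the fiber dimension being exactly $r$ (so that the Serre duality shift is sharp) is a sensible sanity check, but adds nothing beyond what the lemma already guarantees.
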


%%%%%%%%%%%%%%%%%%%%%%%%%%%%%%%%%%%%
\subsection{Exhaustion functions}\label{subsection:exhaustions}

Let $X$ be a complex manifold and $\exh:X\rar\RR$ be a continuous function.
We recall that $\exh$ is an {\it{exhaustion function}} if it is proper and bounded from below.

\begin{definition}
A $C^2$ function $\exh:X\rar\RR$ is {\it{strongly $(q+1)$-convex}} if 
its complex Hessian
$i\pa\ol{\pa}\exh$ has index of positivity at least $\dim_\CC(X)-q$
at each point $x\in X$.
%to any subspace of $T_x X$ of complex dimension $q+1$
%is not semi-negative definite.% for all points $x\in X$.
%has at most $q$ nonpositive eigenvalues at each point of $X$.
%The complex manifold $X$ is {\bf{strongly $(q+1)$-complete}} if it admits a smooth $(q+1)$-convex exhaustion function.
\end{definition}

We would like to elaborate upon the following result
(see also Chapter IX, Corollary 4.11 \cite{demailly:book}).

\begin{theorem}[Andreotti-Grauert \cite{andreotti-grauert}]\label{thm:q-convex}
Let $X$ be a complex manifold that admits a strongly
$(q+1)$-convex smooth exhaustion function.
%
%a strongly $(q+1)$-complete complex manifold.
Then
$\cohdim_{Dol}(X)\leq q$.
\end{theorem}

We will need the following minor and standard generalization of the above definition
because we will have to deal with non-smooth functions.
%We will need to be able to work with non-smooth functions; this is relatively minor and standard.

\begin{definition}
A continuous function $\exh:X\rar\RR$ is {\it{strongly $(q+1)$-convex}} if, for every $x\in X$, there exists a germ of locally closed
complex submanifold $L_x\subset X$ through $x$ of codimension $q$
such that $\exh|_{L_x}$ is strongly plurisubharmonic, i.e.
for every holomorphic immersion $f:\Delta\rar L_x$ there exists $\e>0$
such that $\exh\circ f-\e |z|^2$ is subharmonic on $\Delta=\{z\in\CC\,|\,|z|<1\}$.
\end{definition}

A suitable variation of Richberg approximation theorem
\cite{richberg}, using regularized maxima as in
Theorem 5.21 in \cite{demailly:book}, yields the following.

\begin{lemma}[Smooth approximation]\label{lemma:smoothening1}
Let $X$ be a complex manifold and let $\exh:X\rar\RR$ be a strongly $(q+1)$-convex function.
%
%continuous function
%such that for every $x\in X$ there exists a locally closed complex submanifold $L_x\subset X$ through $x$
%of dimension $N-q$ such that $\exh|_{L_x}$ is strictly plurisubharmonic.
Then, for every $\nu>0$ there exists a smooth strongly $(q+1)$-convex function $\hat{\exh}:X\rar\RR$ such that
$\|\hat{\exh}-\exh\|_\infty<\nu$.
In particular, if $\exh$ is strongly plurisubharmonic on the germ $L_x$, then
$\hat{\exh}$ can be required to be strongly plurisubharmonic
on $L_x$ for all $x\in X$.
%
%  and $\tilde{\exh}|_{L_x}$ is still strictly plurisubharmonic for every $x\in X$.
%Thus, $\tilde{\exh}$ is $(q+1)$-convex.
\end{lemma}

%In light of the above Lemma \ref{lemma:smoothening1}, we will call strongly $q$-convex such a (possibly) nonsmooth $\exh$ too.

The application we have in mind is the following.

\begin{corollary}[Maxima of $(q+1)$-convex functions]\label{cor:max}
Let $X$ be a complex manifold and let $\exh:X\rar\RR$ be a continuous exhaustion function.
Suppose that for every $x\in X$
\begin{itemize}
\item[(a)]
there exist an open neighbourhood $U$ of $x$ and
finitely many smooth functions $\exh_i:U\rar\RR$ such that $\exh|_U=\max\{\exh_i\}$;
\item[(b)]
there exists a complex submanifold $L_x\subset U$ through $x$ of codimension $q$
such that $\exh_i|_{L_x}$ is strongly plurisubharmonic for all $i$.
\end{itemize}
Then there exists a smooth 
strongly $(q+1)$-convex
exhaustion function $\hat{\exh}$
on $X$, which in particular is strongly plurisubharmonic on each $L_x$. Moreover, if $\exh$ is invariant under a finite group $G$ of automorphisms of $X$, then $\hat{\exh}$ can be required to be $G$-invariant.
%
%$X$ is strongly $(q+1)$-complete.
\end{corollary}
\begin{proof}
By Lemma \ref{lemma:smoothening1}, the function $\hat{\exh}$
is at bounded distance from $\exh$ and so $\hat{\exh}$ is an exhaustion function. As $\hat{\exh}$ is smooth and $\hat{\exh}|_{L_x}$ is strongly plurisubharmonic, it is strictly $(q+1)$-convex.

Finally, if $\exh$ is $G$-invariant, then 
the $L_x$ can be chosen to be $G$-invariant too. So
the function $\hat{\exh}^G$
defined as
$
\hat{\exh}^G(x):=\frac{1}{|G|}\sum_{g\in G} \hat{\exh}(g\cdot x)
$
is still a smooth exhaustion function and its restriction to each $L_x$ is strongly plurisubharmonic. Moreover, $\hat{\exh}^G$ is manifestly $G$-invariant.
\end{proof}
%
%\begin{remark}\label{rmk:max}
%We will be interested in a function $\exh:X\rar\RR$ that satisfies:
%\begin{itemize}
%\item 
%for every $x\in X$ there exist an open neighbourhood $U$ of $x$ and
%finitely many smooth functions $\xi_i:U\rar\RR$ such that $\exh|_U=\max\{\xi_i\}$;
%\item
%there exists a complex submanifold $L_x\subset U$ through $x$ of dimension $N-q+1$
%such that $\xi_i|_{L_x}$ is strongly plurisubharmonic for all $i$.
%\end{itemize}
%For every $x\in X$, such a $\exh|_{L_x}$ is strongly plurisubharmonic
%(see \cite{demailly:book}, Chapter I, Lemma 5.18(e)).
%So, Lemma \ref{lemma:smoothening1} applies to $\exh$ and
%the smooth $\tilde{\exh}$ thus obtained is an exhaustion function
%whenever $\exh$ is.
%\end{remark}

%%%%%%%%%%%%%%%%%%%%%%%%%%%%
\subsection{Stratifications and coverings}\label{subsection:coverings}

Let $X$ be a complex manifold and suppose that $X$ is
stratified through locally closed strata $X_\sigma$,
where $\sigma\in\Sfrak$ and $\Sfrak$ is a partially ordered
set of indices, so that $X_\sigma \subseteq\ol{X}_\tau$
if and only if $\sigma\preceq\tau$.

\begin{definition}\label{def:adapted}
An open cover $\Vfrak=\{V_\sigma\,|\,\sigma\in\Sfrak\}$ of $X$ 
is {\it{adapted to the stratification 
${\{X_\sigma\,|\,\sigma\in\Sfrak\}}$
}}
if it satisfies the following two properties:
\begin{itemize}
\item[(AS1)]
$V_\sigma$ is an open neighbourhood of $X_\sigma$ for all $\sigma\in\Sfrak$;
\item[(AS2)]
$V_\sigma\cap V_\tau\neq\emptyset$ if and only if $\sigma\preceq\tau$ or $\tau\preceq\sigma$.
\end{itemize}
\end{definition}

\begin{notation}
Consider the simplicial complex $\Sfrak'$ obtained from $\Sfrak$ by barycentric subdivision,
whose $d$-simplices are chains $\sigma_\bullet=(\sigma_0\succneqq\sigma_1\succneqq\cdots\succneqq\sigma_d)$
of elements $\sigma_i\in \Sfrak$.
For $\sigma_\bullet\in\Sfrak'$, we will denote
by $V_{\sigma_\bullet}$ the intersection $V_{\sigma_0}\cap\dots\cap V_{\sigma_d}$.
\end{notation}

\begin{center}
\begin{figurehere}
\psfrag{Vs}{$V_\sigma$}
\psfrag{Vt}{$V_\tau$}
\psfrag{Vr}{$V_\rho$}
\psfrag{X}{$X$}
\psfrag{Xs}{$X_\sigma$}
\psfrag{Xt}{$X_\tau$}
\psfrag{Xr}{$X_{\rho}$}
\includegraphics[width=0.5\textwidth]{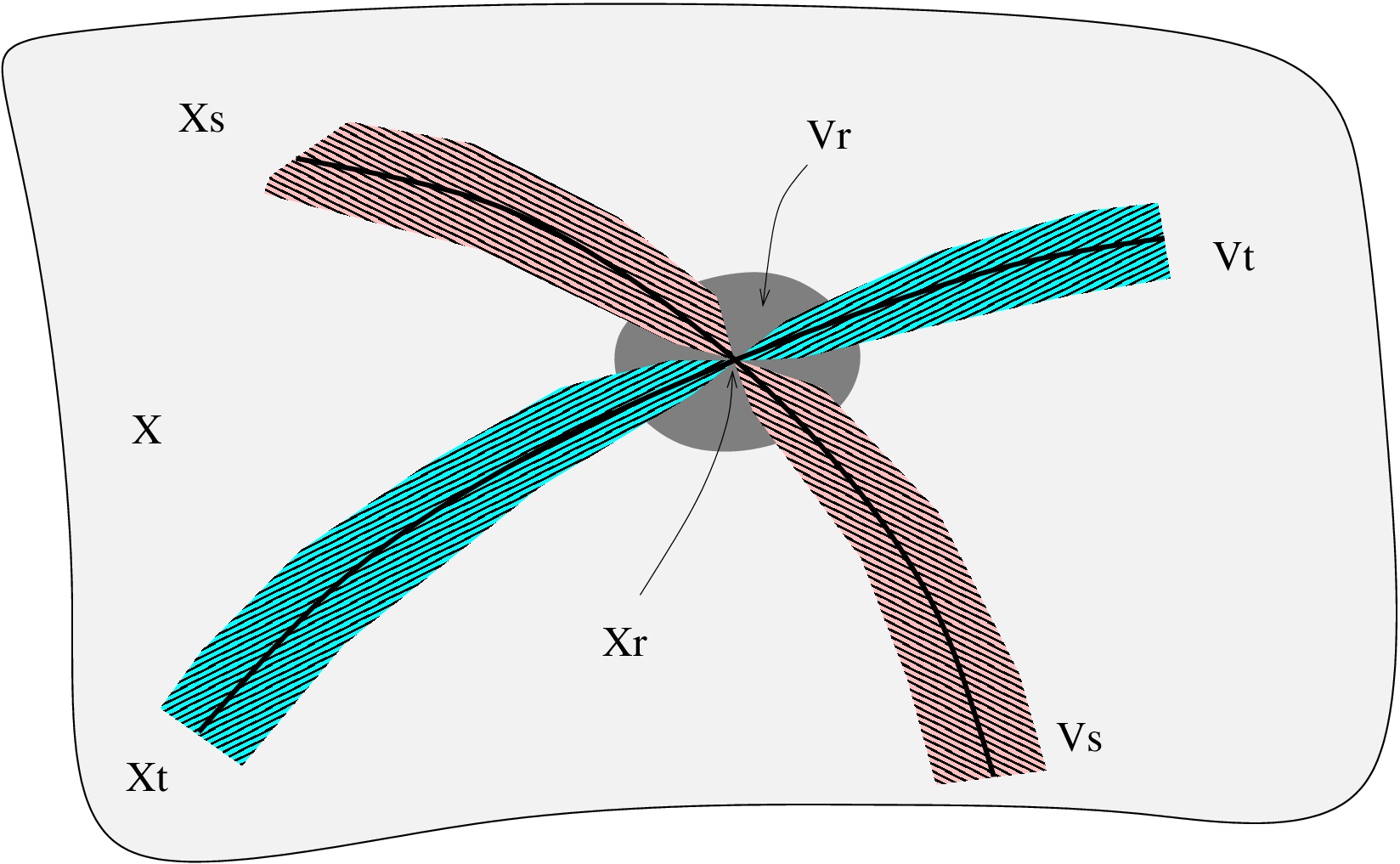}
\caption{{\small How an adapted cover looks like near a stratum $\rho\prec\sigma,\tau$.}}\label{fig:thickening}
\end{figurehere}
\end{center}

\medskip

Notice that, for every $\tau\in\Sfrak$,
the closed locus $\ol{X}_\tau$ is naturally stratified
by $X_\sigma$ with $\sigma\in\Sfrak_\tau=\{\sigma\in\Sfrak\,|\,\sigma\preceq\tau\}$.
Moreover,
the restriction $\Vfrak_\tau=\{V_\sigma\cap \ol{X}_\tau\,|\, \sigma\in\Sfrak_\tau\}$
of $\Vfrak$ is an open cover of $\ol{X}_\tau$ adapted to the induced stratification.

\medskip

We state the following for $\cohdim_{Dol}$ even though
it holds in greater generality.

\begin{lemma}[$\cohdim$ local-to-global]\label{lemma:local-global}
The Dolbeault cohomological dimension of
$X$ can be estimated as follows
\[
\cohdim_{Dol}\left(X\right) \leq
\mathrm{max}
\Big\{
\cohdim_{Dol}(V_{\sigma_\bullet})+\dim(\sigma_\bullet)
\ \Big| \ \sigma_\bullet\in\Sfrak'
\Big\}
\]
\end{lemma}
\begin{proof}
Let $\mathcal{F}$ be an analytic coherent sheaf on $X$.
By Mayer-Vietoris spectral sequence
\[
E_1^{d,q}=\bigoplus_{\sigma_0\succneqq \dots\succneqq \sigma_d} H^{q}(V_{\sigma_0}\cap\dots\cap V_{\sigma_d};\mathcal{F})
\implies
H^{d+q}(X;\mathcal{F})
\]
applied to the cover $\Vfrak$, the result immediately follows.
\end{proof}

We specialize the above estimate to the following case.

\begin{corollary}[$\cohdim$ estimate]\label{cor:stratification-Dolbeault}
Let $\Wfrak=\{W_\sigma\,|\,\sigma\in\Sfrak\}$ be an open cover of $X$
adapted to the stratification and suppose that 
$W_{\sigma_\bullet}$ is strongly $(q+1)$-convex (if non-empty) for all $\sigma_\bullet\in\Sfrak'$.
Then
\[
\cohdim_{Dol}\left(\ol{X}_\tau\right)\leq q+
\dim(\Sfrak'_\tau)
\]
for every $\tau\in\Sfrak$
and so in particular 
$\cohdim_{Dol}(X)\leq q+\dim(\Sfrak')$.
\end{corollary}
\begin{proof}
For every $\sigma_\bullet$ the intersection
$W_{\sigma_\bullet}\cap\ol{X}_\tau$ 
is a closed holomorphic submanifold
of $W_{\sigma_\bullet}$ and so
\[
\cohdim_{Dol}(W_{\sigma_\bullet}\cap\ol{X}_\tau)\leq
\cohdim_{Dol}(W_{\sigma_\bullet})\leq q
\]
by Lemma \ref{lemma:fibr}(a).
The conclusion follows from Lemma \ref{lemma:local-global}
applied to the open cover $\Wfrak_\tau$ of $\ol{X}_\tau$.
\end{proof}

\newpage

%%%%%%%%%%%%%%%%%%%%%%%%%%%%%%%
\section{List of most common symbols}

{\small
%\begin{multicols*}{2}
%\Trick
%\begin{supertabular}{lp{5cm}}
%
\begin{tabular}{ll}
$\varsigma_n$ & indicator that takes value $0$ if $n=0$ or value $1$ if $n>0$\\
$\num{n+k}$ & set $\{1,2,\dots,n+k\}$\\
%$\M_{g,n}^k$ & moduli space of $(C,P)$,
%where $C$ is a compact connected Riemann surface of genus $g$ and $P:\num{n+k}\hra C$ with $p_i\neq p_j$ for distinct $i,j\leq n$\\
%$\Sfrak_N$ & group of permutations on $N$ elements\\
%$\Sfrak_n(l)$ & subgroup of permutations of $\num{n+l}$ that is the identity on $\num{n}$\\
$\Sfrak_n(k,l)$ & set of surjections $\num{n+k}\surj\num{n+l}$ that fix $\num{n}$\\
$\Sfrak$ & union of $\Sfrak_n(2g-2,2g-2-d)$ for $d\geq 0$\\
$\sigma'\preceq\sigma$ & the surjection $\sigma'$ %is a composition of 
factors through $\sigma$\\
$\Sfrak_\tau$ & set of permutations $\sigma\in\Sfrak$ such that $\sigma\preceq\tau$\\ 
%$\Sfrak_n^{lex}(k,l)$ & subset of of $\Sfrak_n(k,l)$ of lexicographical surjections\\
%$b_\sigma$ & degeneration map $\M^l_{g,n}\rar\M^k_{g,n}$ associated to $\sigma\in\Sfrak_n(k,l)$\\
$\delta_\sigma$ & image of $\M^l_{g,n}\rar\M^k_{g,n}$ associated to $\sigma\in\Sfrak_n(k,l)$\\
$\delta_{i,j}$ & Cartier divisor in $\M^k_{g,n}$ where $p_{i}=p_{j}$ for $i\neq j$\\%; empty if both $i,j\leq n$\\
$\delta$ & union of all $\delta_{i,j}$\\
%$\C_{g,n}^k$ & universal family of Riemann surfaces over $\M_{g,n}^k$\\
$\omega_\pi$ & $\pi$-vertical holomorphic cotangent bundle\\
%$\omega_i$ & vertical holomorphic cotangent bundle associated to the map $\M^k_{g,n}\rar\M^{k-1}_{g,n}$ that forgets the $(n+i)$-th marked point\\
%$\Omega\M^k_{g,n}$ & moduli space of nonzero Abelian differentials over $\M_{g,n}^k$\\
$\Omega\M'_{g,n}$ & moduli space of non-zero Abelian differentials with marked zeros over $\M_{g,n}$\\
$\vc{m}$ & non-negative partition of $2g-2$ with $m_i>0$ for $i>n$\\
%
%map $\num{n+k}\rar\NN$ with $\sum_i m_i=2g-2$
%and $m_{n+1},\dots,m_{n+k}>0$\\
$\vc{\mo}$ & partition $(0^n,1^{2g-2})$\\
$\sigma_*\vc{m}$ & push-forward of $\vc{m}$ via $\sigma\in\Sfrak_n(k,l)$\\
$\Omega\M'_{g,n}(\vc{m})$ & moduli space of non-zero Abelian differentials with marked zeros\\ & of multiplicities $\vc{m}$ over $\M_{g,n}$\\
$\Omega\M'_{g,n}(\ol{\vc{m}})$ & moduli space of non-zero Abelian differentials with marked zeros\\ & of multiplicities $\vc{m}$ or more degenerate, over $\M_{g,n}$\\
$\Omega\M_{g,n}(\vc{m})$ & moduli space of non-zero Abelian differentials with unmarked zeros\\ & of multiplicities $\vc{m}$ over $\M_{g,n}$\\
$\Omega\M_{g,n}(\ol{\vc{m}})$ & moduli space of non-zero Abelian differentials with unmarked zeros\\ &of multiplicities $\vc{m}$ or more degenerate, over $\M_{g,n}$\\
$\Omega\M'_{g,n}(\sigma)$ & stratum of $\Omega\M'_{g,n}$ associated to $\sigma_*\vc{\mo}$ for $\sigma\in\Sfrak$\\
%$\aut_n(\vc{m})$ & permutations $\lambda\in\Sfrak_n(k,k)$ that fix $\vc{m}$\\
$\PP\Omega\M$ & projectivization of the moduli space $\Omega\M$\\
%$J^r E$ & $r$-th jet bundle associated to the vector bundle $E$\\
%$\PP\Omega\M'_{g,n}(\vc{m})^l$ & locally closed stratum of codimension $l$ inside $\PP\Omega\M'_{g,n}(\ol{\vc{m}})$\\
%$(\PP\Omega\M_{g,n})^l$ & stratum of codimension $l$ in $\PP\Omega\M_{g,n}$\\
%$\ol{\M}_{g,n}$ & Deligne-Mumford compactification of $\M_{g,n}$\\
%$\ol{\C}_{g,n}$ & universal family over $\ol{\M}_{g,n}$\\
%$\ol{\M}_{g,n}^k$ & fiber product of $k$ copies of $\ol{\C}_{g,n}$ over $\ol{\M}_{g,n}$\\
%$\ol{\delta}$ & closure of $\delta$ inside $\ol{\M}_{g,n}^k$\\
%$\partial_{irr}\M_{g,n}^k$ & boundary divisor of generically irreducible curves\\
%$\partial_{red}\M_{g,n}^k$ & union of divisors of reducible curves distinct from $\ol{\delta}$\\
%$\ol{\Omega\M}_{g,n}^k$ & moduli space of nonzero Abelian differentials over $\ol{\M}_{g,n}^k$\\
%$\PP\ol{\Omega\M}'_{g,n}(\vc{m})$ & closure of $\PP\Omega\M'_{g,n}(\vc{m})$ inside $\ol{\Omega\M}_{g,n}^k$\\
%$\ol{\delta}\PP\Omega\M_{g,n}(\vc{m})$ & divisor of projective Abelian differentials over $\ol{\delta}$\\
%$\partial_{irr}\PP\Omega\M_{g,n}(\vc{m})$ & divisor of projective Abelian differentials over $\partial_{irr}\M_{g,n}^k$\\
%$\partial_{red}\PP\Omega\M_{g,n}(\vc{m})$ & divisor of projective Abelian differentials over $\partial_{red}\M_{g,n}^k$\\
%$\Wfrak$ & open Stein cover $\{W_j\}$ of $C$\\
$\Pcal$ & (local) period map\\
%$\Tcal_{C,P}$ & complex $[T_C\rar \bigoplus_i T_C|_{p_i}]$ of sheaves on $C$ in degrees $[0,1]$\\
%$\Kcal_{C,P,\vc{m}}$ & complex $[K_C\rar \bigoplus_i J_{p_i}^{m_i-1}K_C$] of sheave on $C$ in degrees $[0,1]$\\
$\ul{\CC}_{C,\vc{p}}$ & complex $[\ul{\CC}_C\rar\bigoplus_i \ul{\CC}_{p_i}]$ of sheaves on $C$ in degrees $[0,1]$\\
%$\ul{\CC}_{Q,P}$ & complex $[\bigoplus_j \ul{\CC}_{q_j}\rar\bigoplus_i \ul{\CC}_{p_i}]$ of sheaves on $C$ in degrees $[0,1]$\\
%$\Dcal_{C,P,\vc{m}}$ & complex $[\Ocal_C\rar\bigoplus_i J_{p_i}^{m_i}\Ocal_C]$ of sheaves on $C$ in degrees $[0,1]$\\
%$\cdom$ & open convex cone in $\Omega\M'_{g,n}(\vc{m})$\\
$\Pi_\varphi$ & subspace of elements in $\Hbb^1(\ul{\CC}_{C,\vc{p}})$ that map to $H^{1,0}_\varphi(C)$\\
$\Pi'_\varphi$ & a complement of $\CC(\varphi)$ inside $\Pi_\varphi$\\
%$\Hloc^1$ & flat vector bundle on $\M_g$ such that $\Hloc^1_C=H^1(C;\CC)$\\
%$h$ & standard Hermitian pairing on $\Hloc^1$\\
$A$ & area functional\\
%$\gamma$ & simple arc on $C$ joining two points in $P$\\
%$B$ & basis of $H_1(C,P;\RR)$ consisting of homology classes of arcs\\
%$\Bcal$ & set of all bases $B$\\
$\ell_\gamma(\varphi)$ & length of the geodesic homotopic to $\gamma$ for the metric $|\varphi|^2$\\
$\ell_{sys}(\varphi)$ & length of the shortest nontrivial arc of $|\varphi|^2$\\
$f_B$ & function $\sum_{\gamma\in B}(f\circ\ell_\gamma)$\\
$f_{\Bcal}$ & supremum of $f_B$ over all $B\in\Bcal$\\
$\exh_{\vc{m}}$ & exhaustion function $\log(A\cdot \ell^{-2}_\Bcal)$ on $\PP\Omega\M'_{g,n}(\vc{m})$\\
%$\exh_{\vc{m}}$ & function $\log(A\cdot\ell_{\Bcal}^{-2})$ on $\PP\Omega\M'_{g,n}(\vc{m})$\\
$W_\sigma\subset V_\sigma$ & open thickenings of $\PP\Omega\M'_{g,n}(\sigma)$\\
$\Wfrak$, $\Vfrak$ & open covers $\{W_\sigma\}$, $\{V_\sigma\}$ of $\PP\Omega\M'_{g,n}$\\
%$\C_{V_\sigma}$ & universal family over $V_\sigma$\\
%$\disk^j_\sigma$ & disk bundle on $V_\sigma$\\
$\disk_{\sigma}$ & merging datum for $V_\sigma$\\
$\widetilde{V}_\sigma$ & preimage of $V_\sigma$ in $\Omega\M'_{g,n}$\\
$\widetilde{\Vfrak}$ & open cover $\{\widetilde{V}_\sigma\}$ of
$\Omega\M'_{g,n}$\\
$R_\sigma$ & injective radius function $\Omega\M'_{g,n}\rar\RR$ relative to $\sigma$\\
$\Bcal^{inn}_\sigma(\varphi)$ & set of inner bases of $H_1(\disk_\sigma(\varphi),\vc{p};\RR)$\\
$\Bcal^{out}_\sigma(\varphi)$ & set of outer bases of $H_1(C,\disk_\sigma(\varphi);\RR)$\\
$\eta_\sigma$ & function $A\cdot \ell^{-2}_{\Bcal^{out}_\sigma}$ on $V_\sigma$\\
$\zeta_\sigma$ & function $|\Pcal_{\Bcal^{inn}_\sigma}|^2\cdot\ell^{-2}_{\Bcal^{out}_\sigma}$ on $V_\sigma$\\
%on $V_\sigma$ strictly psh transversely to $\PP\Omega\M'_{g,n}(\sigma)$\\
${\exh}_\sigma$ & function $\log(\eta_\sigma+\chi\circ\zeta_\sigma)$ on $V_\sigma$\\
%$\mathcal{A}^{0,q}_X$ & sheaf of smooth $(0,q)$-differential forms on $X$\\
%$\Omega^{p,0}_X$ & sheaf of holomorphic $(p,0)$-forms on $X$\\
$\sigma_\bullet$ & sequence $(\sigma_0\succneqq\sigma_1\succneqq\cdots\succneqq\sigma_d)$\\
$V_{\sigma_\bullet}$ & intersection $V_{\sigma_0}\cap\dots\cap V_{\sigma_d}$
\end{tabular}
%\end{supertabular}
%\end{multicols*}
}
%\end{multicols}

%%%%%%%%%%%%%%%%%%%%%%%%%%%%%%%%%%%%%%%
\bibliographystyle{amsplain}
\bibliography{ab_diff-coh_dim}

\end{document}